%%%%%%%%%%%%%%%%%%%%%%%%%%%%%%%%%%%%%%%%%%%%
%!TeX spellcheck = en-US

\documentclass%[12pt]
{amsproc}
\usepackage{amsmath}
\usepackage{amsthm}
\usepackage{amssymb}
\usepackage{graphicx}
\usepackage{multirow}
\usepackage{caption}
\captionsetup[table]{skip=8pt}
\setlength\intextsep{3mm}

\usepackage{subfig}

\DeclareGraphicsExtensions{.png,.pdf,.eps}

\usepackage[all,2cell]{xy}
\usepackage{tikz}
\usepackage{pgf}
\usepackage{enumerate}
\usetikzlibrary{cd}

\usepackage{array}
\newcolumntype{H}{>{\setbox0=\hbox\bgroup}c<{\egroup}@{}}
\newcolumntype{L}{>{$}l<{$}} % math-mode version of "l" column type
\newcolumntype{C}{>{$}c<{$}} % math-mode version of "c" column type

\usepackage{hyperref}

%\usepackage{titlesec}
%\titleformat{\part}[hang]{\normalfont\scshape}{\thepart.}{1em}{}

\newtheorem*{theorem*}{Theorem}
\newtheorem*{corollary*}{Corollary}
\newtheorem{theorem}{Theorem}[section]
\newtheorem{lemma}[theorem]{Lemma}
\newtheorem{corollary}[theorem]{Corollary}
\newtheorem{proposition}[theorem]{Proposition}

\theoremstyle{definition}

\newtheorem{definition}[theorem]{Definition}

\newtheorem{setup}[theorem]{Setup}
\newtheorem{problem}{Problem}
\newtheorem{remark}[theorem]{Remark}

%%%%%%%%%%%%%
%\newcommand\testname{\sc Summary}
%
%\newenvironment{summary}{%
%    \small
%    \begin{center}%
%        {\bfseries \testname\vspace{-.5em}\vspace{0pt}}%
%    \end{center}%
%    \quotation}
%    {\endquotation}
%%%%%%%%%%%%%

%\numberwithin{equation}{section}

\setcounter{tocdepth}{1}
\setcounter{secnumdepth}{4}

\title[Rational homogeneous spaces as geometric realizations]{Rational homogeneous spaces as geometric realizations of birational transformations}

\author[Occhetta]{Gianluca Occhetta}
\address{Dipartimento di Matematica, Universit\`a degli Studi di Trento, via
Sommarive 14 I-38123 Povo di Trento (TN), Italy}
 
\email{gianluca.occhetta@unitn.it, eduardo.solaconde@unitn.it}

\author[Romano]{Eleonora A. Romano}
\address{Dipartimento di Matematica, Universit\`a degli Studi di Genova, via Dodecaneso 35, I-16146, Genova (GE), Italy}
\email{eleonoraanna.romano@unige.it}

\author[Sol\'a Conde]{Luis E. Sol\'a Conde}

\author[Wi\'sniewski]{Jaros\l{}aw A. Wi\'sniewski}
\address{Instytut Matematyki UW, Banacha 2, 02-097 Warszawa, Poland}
\email{J.Wisniewski@uw.edu.pl}

\subjclass[2010]{Primary 14L30; Secondary 14E30, 14L24, 14M17}

\thanks{First and third author supported by PRIN project ``Geometria delle variet\`a algebriche''. Fourth author supported by Polish National Science Center project 2016/23/G/ST1/04282.}

%%%%%%%%%%%%%%%%%% JW input
\usepackage[textsize=tiny]{todonotes}

\usepackage{enumitem}
%\usepackage{dblfloatfix}

%\usepackage{csquotes}
%\usepackage[backend=biber, style=verbose-ibid]{biblatex}

%%the following package is for clever dvi-viewing
%%is not a part of standard TeX distribution
%remove it if it doesn't load
%\usepackage[active]{srcltx}

\newcommand\ignore[1]{}

%\theoremstyle{plain}
%\newtheorem{thm}[equation]{Theorem}
%\newtheorem{cor}[equation]{Corollary}
%\newtheorem{lemma}[equation]{Lemma}
%\newtheorem{prop}[equation]{Proposition}
%\newtheorem{conj}[equation]{Conjecture}
%\theoremstyle{remark}
%\newtheorem{prob}[equation]{Problem}
%\newtheorem{quest}[equation]{Question}
%\newtheorem{rem}[equation]{Remark}
%\newtheorem{ex}[equation]{Example}
%\theoremstyle{definition}
%\newtheorem{assumpt}[equation]{Assumptions}
%\newtheorem{defi}[equation]{Definition}

%%%%%%%%%%%%%%%%
%%%%%%%%%%%%%%% normalfont w remark i example
% \let\oldrem\rem
% \renewcommand{\rem}{\oldrem\normalfont}
% \let\oldex\ex
% \renewcommand{\ex}{\oldex\normalfont}

%\newenvironment{pf}{\begin{proof}}{\end{proof}}

%\numberwithin{equation}{section}
%\numberwithin{equation}{section} \theoremstyle{definition}
%\newtheorem{ex}{Example}[section]

%\DeclareMathOperator{\Pic}{Pic}

%\DeclareMathOperator{\Aut}{Aut}
%\DeclareMathOperator{\Spec}{Spec}
%\DeclareMathOperator{\Def}{Def}
%\DeclareMathOperator{\Hom}{Hom}
%\def\HH{\operatorname{H}}
\DeclareMathOperator{\HH}{H}

\newcommand\CC{{\mathbb{C}}}

\newcommand\ZZ{{\mathbb{Z}}}

\def\C{{\mathbb C}}

\def\H{{\mathbb H}}

\def\Oc{{\mathbb O}}
\def\P{{\mathbb P}}
\def\Q{{\mathbb Q}}
\def\R{{\mathbb R}}
\def\Z{{\mathbb Z}}

\def\cC{{\mathcal C}}
\def\cD{{\mathcal D}}
\def\cE{{\mathcal E}}

\def\cN{{\mathcal{N}}}
\def\cO{{\mathcal{O}}}

\def\cQ{{\mathcal{Q}}}

\def\cS{{\mathcal S}}
\def\cT{{\mathcal T}}

\def\cY{{\mathcal Y}}
\def\Q{{\mathbb{Q}}}

\def\fg{{\mathfrak g}}
\def\fh{{\mathfrak h}}
\def\fp{{\mathfrak p}}
\def\fb{{\mathfrak b}}

\def\operatorname#1{\mathop{\rm #1}\nolimits}

\def\DA{{\rm A}}
\def\DB{{\rm B}}
\def\DC{{\rm C}}
\def\DD{{\rm D}}
\def\DE{{\rm E}}
\def\DF{{\rm F}}
\def\DG{{\rm G}}

\def\Exc{\operatorname{Exc}}

\def\Hom{\operatorname{Hom}}

\def\Pic{\operatorname{Pic}}
\def\Hom{\operatorname{Hom}}

\def\id{\operatorname{id}}

\def\deg{\operatorname{deg}}

\def\conj{\operatorname{conj}}

\def\Nef{{\operatorname{Nef}}}

\def\NU{{\operatorname{N^1}}}

\newcommand{\nbh}[1]{\operatorname{ngb}(#1)}

\def\Ad{\operatorname{Ad}}

\DeclareMathOperator{\No}{N}

%CHOW and all that

\newcommand{\pb}{\ar@{}[dr]|{\text{\pigpenfont J}}}
\def\ol{\overline}

\newcommand{\shse}[3]{0 ~\ra ~#1~ \lra ~#2~ \lra ~#3~ \ra~ 0}
\makeatletter
\newcommand{\xleftrightarrow}[2][]{\ext@arrow 3359\leftrightarrowfill@{#1}{#2}}
\makeatother
\newcommand{\xdasharrow}[2][->]{
\tikz[baseline=-\the\dimexpr\fontdimen22\textfont2\relax]{
\node[anchor=south,font=\scriptsize, inner ysep=1.5pt,outer xsep=2.2pt](x){#2};
\draw[shorten <=3.4pt,shorten >=3.4pt,dashed,#1](x.south west)--(x.south east);
}}

% symmetric difference
%\usepackage{stackengine}
%\def\stacktype{L}
%\stackMath
%\def\dysym{-3pt}
%\newcommand\symdif{\mathrel{\stackon[\dysym]{-}{\dot{\phantom{-}}}}}

\newcommand\ra{{\ \rightarrow\ }}
\newcommand\lra{\longrightarrow}

%hats

%% big wedge

%extra symbols

\def\Mo{\operatorname{\hspace{0cm}M}}

%\counterwithout{figure}{section}
%\setcounter{figure}{0}

%\input{Dynkin}

%%%%%%%%%%%%%%%%%%

\begin{document}
\begin{abstract}
A geometric realization of a birational map $\psi$ among two complex projective varieties is a variety $X$ endowed with a $\C^*$-action inducing $\psi$ as the natural birational map among two extremal geometric quotients. In this paper we study geometric realizations of some classic birational maps --inversion maps, special Cremona transformations, special birational transformations of type $(2,1)$--, by considering $\C^*$-actions on certain rational homogeneous spaces and their subvarieties.  
\end{abstract}
\maketitle
\tableofcontents

%%%%%%%%%!TEX root = WORS5.tex

\section{Introduction}\label{sec:intro}

A remarkable idea present in modern Birational Geometry is the one that two birationally equivalent varieties should be obtained as GIT quotients of the same algebraic variety. This link between Geometric Invariant Theory and the Minimal Model Program arose upon the seminal work of Thaddeus and Reid in the 1990's (see \cite{Thaddeus1996,ReidFlip}) and leds to the concept of Mori Dream Spaces, whose small $\Q$-factorial modifications can be obtained as quotients by the action of a torus of the spectrum of a finitely generate algebra, called Cox ring of the Mori Dream Space. 

On the other hand, the action of a complex $1$-dimensional torus $\C^*$ on a projective variety induces birational transformations between the associated geometric quotients (cf. \cite[Lemma~3.4]{WORS1}, \cite[Remark~2.13]{WORS3}). It  makes then sense to ask whether any birational transformation may be obtained in this way. On one hand, this question must be tackled theoretically by considering $\C^*$-equivariant projective compactifications of the cobordisms studied by Morelli and W{\l}odarczyk (cf. \cite{Morelli,Wlodarczyk}).
On the other, one would like to construct explicitly a projective variety with an action of $\C^*$ inducing a given birational map $\psi$ --that we call a {\em geometric realization of $\psi$}--, which is the main motivation of this paper. This concept has been introduced in \cite{WORS4a}, where geometric realizations for a certain simple class of birational transformations, called bispecial, have been constructed; a similar construction had been proved to work in the case of Atiyah flips (cf. \cite{WORS2}). 

Another possible idea to construct geometric realizations is to start with some reasonable class of $\C^*$-actions on projective varieties, and then to study their $\C^*$-invariant subvarieties and the associated birational maps.   
In this paper we consider $\C^*$-actions on rational homogeneous spaces, and describe the birational maps induced by them. We stick to the case in which the action is equalized (see \ref{sssec:equalized} below), a condition that guarantees the smoothness of the birationally equivalent varieties associated to the action.  Remarkably the list of transformations that we obtain in this way contains many classically interesting examples, such as the inversion of matrices, and the special quadro-quadric Cremona transformations (see \cite{ESB}). Furthermore, by considering certain $\C^*$-invariant subsets of rational homogeneous spaces we are able to produce geometric realizations of other birational transformations, such as the special transformations of type $(2,1)$ classified by Fu and Hwang, \cite{FH}.\par\medskip

\noindent{\bf Outline.} After recalling some basic facts on torus actions on rational homogeneous varieties (Section \ref{sec:prelim}), we focus on the classification of equalized actions with isolated extremal fixed point components, and study their associated birational transformations (Section \ref{sec:examples}). Section \ref{sec:examples2}
contains projective descriptions of the Cremona transformations obtained in this way. In particular our discussion provides (among other presentation of rational homogeneous varieties as geometric realizations of birational maps) the following statement:
\begin{theorem*}\label{thm*:inversion}
Let $\psi:\P(M^\vee_{n\times n}(\C))\dashrightarrow \P(M^\vee_{n\times n}(\C))$ be the projectivization of the inversion map of $n\times n$ matrices, and let $\psi_s$, $\psi_a$ be its restrictions to the projetivizations of the spaces of symmetric and skew-symmetric matrices, respectively. Then there exist geometric realizations of $\psi$, $\psi_s$, $\psi_a$ (with $n$ even in this case), given by an equalized $\C^*$-action on, respectively, the following rational homogeneous varieties (see Section \ref{ssec:notation} for the notation):
$$
\DA_{2n-1}(n),\qquad \DC_n(n), \qquad \DD_n(n).
$$
\end{theorem*}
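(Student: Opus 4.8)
The plan is to realize all three maps at once by a single linear-algebraic construction. Decompose $\C^{2n}=V\oplus W$ with $\dim V=\dim W=n$, and let $\C^*$ act with weight $0$ on $V$ and weight $1$ on $W$. For $\DA_{2n-1}(n)=\mathrm{Gr}(n,2n)$ this is the induced action on the Grassmannian of $n$-planes; for $\DC_n(n)$ and $\DD_n(n)$ we first equip $\C^{2n}$ with, respectively, a symplectic and a nondegenerate symmetric form for which $V,W$ are complementary maximal isotropic subspaces, so that our $\C^*$ sits in $\mathrm{CSp}_{2n}$ (resp. $\mathrm{CO}_{2n}$): it scales the form by $t$ and hence preserves the variety of maximal isotropic subspaces. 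In each case $[V]$ and $[W]$ are isolated fixed points, and representing a general plane as the graph $U_A=\{(x,Ax):x\in V\}$ of $A\in\Hom(V,W)$ one computes $\lim_{t\to 0}t\cdot U_A=[V]$ and $\lim_{t\to\infty}t\cdot U_A=[W]$, so $[V]$ is the sink and $[W]$ the source.

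First I would identify the two extremal quotients. Since $[V],[W]$ are isolated and (as checked below) the action is equalized, the Bia\l{}ynicki--Birula cells of sink and source are affine spaces on which $\C^*$ acts with the single weight $+1$, resp. $-1$; by the description of extremal quotients recalled in Section~\ref{sec:prelim} they are therefore the projectivized tangent spaces $\P(T_{[V]})$ and $\P(T_{[W]})$. For the Grassmannian $T_{[V]}=\Hom(V,W)$ and $T_{[W]}=\Hom(W,V)$, both canonically the projectivized space of $n\times n$ matrices; restricting to $\DC_n(n)$, $\DD_n(n)$ cuts these down to the symmetric, resp. alternating, homomorphisms, i.e. to $\P(\Sym^2 V^\vee)$ and $\P(\bigwedge^2 V^\vee)$. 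Writing the same plane near the source as the graph of $A^{-1}:W\to V$ then shows that the induced birational map between the extremal quotients is $[A]\mapsto[A^{-1}]$, which is exactly $\psi$ (resp. $\psi_s$, and $\psi_a$, the latter being well defined since $A^\top=-A$ forces $(A^{-1})^\top=-A^{-1}$).

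Next I would verify equalization (see \ref{sssec:equalized}). At an arbitrary fixed plane $U=V'\oplus W'$ one has $T_{[U]}=\Hom(U,\C^{2n}/U)$; decomposing according to the weights $0$ on $V,V/V'$ and $1$ on $W,W/W'$, all tangent weights lie in $\{-1,0,+1\}$, and the same holds for the symmetric and alternating subrepresentations governing $\DC_n(n)$ and $\DD_n(n)$. In particular the normal weights at sink and source equal $\pm 1$, so the action is equalized and the extremal quotients are smooth, as required.

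The one genuinely delicate point is the hypothesis that $n$ be even in the skew-symmetric case, and it enters precisely at the level of connected components: two maximal isotropic subspaces $V,W$ of an orthogonal $\C^{2n}$ lie in the same family (the same copy $\DD_n(n)$ of the spinor variety) if and only if $\dim(V\cap W)\equiv n \pmod 2$, and here $V\cap W=0$. Thus $[V]$ and $[W]$ lie on one and the same $\DD_n(n)$ --- so that the action, whose orbits join sink to source, genuinely lives on a single spinor variety --- exactly when $n$ is even. I expect the bulk of the work to be the identification of the extremal quotients with these projectivized matrix spaces and the verification, through the family of general orbits, that the resulting birational map is globally the projectivized inversion; the equalization and parity checks are then short.
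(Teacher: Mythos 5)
Your construction is correct, and at bottom it is the same one the paper uses: Sections \ref{ssec:grass}, \ref{ssec:isgrass} and \ref{ssec:spin} realize all three maps via exactly your decomposition $\C^{2n}=V\oplus W$ with $\C^*$-weights $0$ and $1$, restricted to the Lagrangian Grassmannian and the spinor variety by means of a symplectic (resp.\ symmetric) form that the action rescales, and your parity discussion for $\DD_n(n)$ (same spinor family if and only if $n$ is even, via the intersection-dimension criterion) is precisely the paper's argument, which cites Harris for that criterion. The differences are in how the verifications are carried out, and they are worth noting. You compute the limits and the induced map in affine graph coordinates $U_A=\{(x,Ax)\}$, obtaining $[A]\mapsto[A^{-1}]$ directly; the paper instead works in Pl\"ucker coordinates, expanding $(v_1^-+v_1^+)\wedge\cdots\wedge(v_n^-+v_n^+)$ in powers of $t$ (a computation it attributes to Thaddeus) --- the two are equivalent, yours being slightly more economical. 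More substantially, you establish equalization and the isolatedness of $[V]$, $[W]$ by decomposing $T_{[U]}=\Hom(U,\C^{2n}/U)$ at each fixed plane $U=V'\oplus W'$ and noting that all weights lie in $\{-1,0,+1\}$, the tangent spaces of the isotropic subvarieties being $\C^*$-subrepresentations and hence inheriting this (a correct and fully self-contained argument, provided you keep working inside $\Hom(U,\C^{2n}/U)$ rather than literally identifying the tangent space with $\Sym^2U^\vee$ or $\bigwedge^2U^\vee$, since the form-induced identification twists the weights). The paper instead derives these facts from the Lie-theoretic classification of short gradings of simple Lie algebras (Table \ref{tab:short}) and the resulting classification of equalized actions with isolated extremal fixed points (Proposition \ref{prop:equalisol}, Corollary \ref{cor:equalisol}). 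Your route needs no structure theory; the paper's heavier machinery buys the complete classification --- including the quadrics and $\DE_7(7)$, which finish the Ein--Shepherd-Barron list --- as well as the transversal group $G^\perp$ and the resulting $G^\perp$-equivariance of $\psi$ (Corollary \ref{cor:homocremona}), none of which your argument addresses, nor needs to for this statement.
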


Note that the inversion maps 
$\psi$, $\psi_s$, $\psi_a$ induced by the equalized $\C^*$-action on $\DA_{5}(3), \DC_3(3), \DD_6(6)$ are quadro-quadric special Cremona transformations as described in \cite{ESB}; the list is completed with the map induced by the equalized $\C^*$-action on $\DE_7(7)$, see Remark \ref{rem:bispecial}.

In the final Section \ref{sec:derived} we study induced $\mathbb{C}^*$-actions on certain fiber bundles contained in rational homogeneous varieties, and show that some of them are geometric realizations of special birational transformations. More concretely, our arguments in that section allow us to pose the following  statement:
\begin{theorem*}\label{thm*:bundles}
Let $F$ be a Fano manifold of Picard number one, and assume that there exists a special birational transformation $\psi:F\dashrightarrow \P^{\dim(F)}$ of type $(1,2)$. Then there exists a geometric  realization of $\psi$, given by an equalized $\C^*$-action on a locally trivial $F$-bundle $X\to\P^1$. Furthermore, $F$ can be $\C^*$-equivariantly embedded in a rational homogeneous variety endowed with an equalized $\C^*$-action. 
\end{theorem*}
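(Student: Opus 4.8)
The plan is to encode $\psi$ through its resolution and then to spread this resolution over $\P^1$ so that it becomes the wall-crossing associated to a single $\C^*$-action. Since $\psi$ is special of type $(1,2)$, it is resolved by a roof $W$ fitting into $\mu\colon W\to F$ and $\nu\colon W\to\P^n$, where $\mu$ realizes $W$ as the blow-up of the smooth irreducible base locus $B\subset F$ of the linear system $|\cO_F(1)\otimes\cI_B|$ defining $\psi$, and $\nu$ realizes $W$ as the blow-up $\Bl_{B'}\P^n$ of the smooth base locus $B'\subset\P^n$ of the quadratic system defining $\psi^{-1}$; writing $E,E'$ for the two exceptional divisors one has $\nu^*\cO_{\P^n}(1)=\mu^*\cO_F(1)-E$ and $\mu^*\cO_F(1)=\nu^*\cO_{\P^n}(2)-E'$. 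Equivalently, embedding $F\hookrightarrow\P(V)$ by $|\cO_F(1)|$, the map $\psi$ is the linear projection from $\langle B\rangle=\P(V_0)$, and the hypothesis on the inverse says precisely that this projection is birational with quadratic inverse. First I would record these facts (standard properties of special transformations) together with the numerical data of $E,E'$, which are what the $\C^*$-action must reproduce.

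Next I would build $X$. The idea is to choose a one-parameter subgroup $\lambda\colon\C^*\to\Aut(F,\cO_F(1))$ adapted to $B$ and to realize $X$ as a relative copy of $F$ over $\P^1$. Concretely, let $a_0\le\cdots\le a_M$ be the weights of $\lambda$ on $V$ in a basis of eigenvectors; since $F\subset\P(V)$ is $\lambda$-invariant its homogeneous ideal is generated by $\lambda$-weighted-homogeneous forms, and these forms cut out a subvariety $X\subset P:=\P\big(\bigoplus_i\cO_{\P^1}(a_i)\big)$ whose fibre over every point of $\P^1$ is $F$ (the fibres over $0$ and $\infty$ are $F$ itself because $\lambda\in\Aut(F)$). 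Thus $X\to\P^1$ is a locally trivial $F$-bundle, and the torus acting on $P$ --- the product of the scaling on the base $\P^1$ with the grading $\lambda$ on the fibres --- restricts to a $\C^*$-action on $X$. The real work here is to select the specific $1$-parameter subgroup inside this torus so that the sink and the source of the resulting action are the two isolated components prescribed by $B$ and $B'$, while keeping the inner fixed components under control; this is what ties the abstract construction to the geometry of $\psi$.

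I would then verify the two defining properties. Equalization amounts to checking that the weights of the action on the normal bundles of the sink and of the source are all equal to $\pm1$; this reduces to the corresponding statement for $\lambda$ on $F$ near $B$ and $B'$, which holds because $\mu$ and $\nu$ are smooth blow-ups, so that the relevant normal directions carry weight one. To identify the extremal geometric quotients I would run the Bia\l{}ynicki--Birula decomposition together with the GIT description of the quotients recalled in the preliminaries (cf. \cite[Lemma~3.4]{WORS1}): one extremal quotient recovers $F$ and the other recovers $\P^n$, while the induced birational map between them is read off on a general orbit, where it coincides with the projection $\psi$. Matching this induced map with $\psi$ on the nose --- rather than merely up to a birational modification --- is the delicate point, and is exactly where the data of $E,E'$ recorded in the first step is used.

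Finally, for the last assertion I would invoke the Fu--Hwang classification: a Fano manifold of Picard number one admitting a special transformation of type $(1,2)$ to $\P^n$ belongs to a short explicit list, and each member carries a $\C^*$-action of the required kind and embeds $\C^*$-equivariantly into one of the rational homogeneous varieties with equalized action produced in Section~\ref{sec:examples}; the bundle $X$ then appears as a $\C^*$-invariant subvariety of that homogeneous space, in the spirit of the other constructions of this section. I expect the main obstacle to be the simultaneous control of local triviality, equalization, and the exact identification of the extremal quotients with $F$ and $\P^n$: naive degenerations (for instance the deformation to the normal cone of $B$) break local triviality by producing reducible special fibres, so the crux is to produce the one-parameter subgroup $\lambda$ and the linearization for which all three requirements hold at once, which is precisely where the special hypothesis (single smooth blow-up, quadratic inverse) and the classification enter.
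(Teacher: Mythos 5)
Your overall framework---realize $X$ as the bundle associated to a one-parameter subgroup $\lambda$ acting on $(F,\cO_F(1))$, i.e.\ as the subvariety of $\P\bigl(\bigoplus_i\cO_{\P^1}(a_i)\bigr)$ cut out by the $\lambda$-weighted-homogeneous equations of $F$---is in fact the same structure the paper uses: in Section \ref{ssec:linsec} the variety $X=\ol{X}(J)$ is rewritten as $\cE\times^{\C^*}\cD_i(J)$ for a principal $\C^*$-bundle $\cE\to\P^1$, the fiberwise action being the co-character $\sigma_+$ of Lemma \ref{lem:actionF-}. But there is a genuine gap at the center of your proposal: you never produce the subgroup $\lambda$, and you explicitly defer it (``the real work'', ``the crux''). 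This cannot be postponed, because it is the entire content of the theorem: for an abstract Fano manifold $F$ of Picard number one the group $\Aut(F,\cO_F(1))$ may be finite, and nothing in your first three paragraphs extracts a $\C^*$-action on $F$ from the data of $\psi$ alone. The paper obtains $\lambda$ only by invoking the Fu--Hwang classification \cite{FuHw} \emph{first}: either $F$ is one of the homogeneous spaces $\cD_i(J)$ of Table \ref{tab:FuHw}, in which case $\lambda=\sigma_+$ (Lemma \ref{lem:actionF-}, Table \ref{tab:inducedF}), or $F$ is one of the two linear sections of $\DA_4(2)$ or $\DD_5(4)$, in which case $\lambda$ is the restriction of $\sigma_+$ to a $\C^*$-invariant linear section, and one must then prove that a general invariant section in $\HH^0(\cD_i(J),L)_1$ is smooth and meets the fixed locus correctly---this is the analysis of Section \ref{ssec:linsec}, using Proposition \ref{prop:smooth}, Remark \ref{rem:smooth}, Bertini, and the normal-bundle computations of Table \ref{tab:FuHw2}. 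So the classification cannot be relegated to the ``furthermore'' clause as in your last paragraph, and the two non-homogeneous cases are not covered by your argument at all. The same applies to matching the induced map with $\psi$ ``on the nose'': you flag it as delicate but never do it, while in the paper it follows from Lemma \ref{lem:FHexc} (identification of the exceptional loci) combined with the uniqueness statements implicit in the classification.

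Two further concrete errors. First, your requirement that ``the sink and the source of the resulting action are the two isolated components prescribed by $B$ and $B'$'' is wrong: for the induced map to go from $F$ to $\P^{\dim(F)}$, the sink of the action on $X$ must be a whole fiber $F_-\simeq F$, which is a \emph{divisor} (so that $\P(\cN^\vee_{Y_-|X})\simeq Y_-\simeq F$; a smaller sink would force the extremal quotient to have Picard number at least two); only the source is an isolated point. The base loci $B,B'$ correspond to the \emph{inner} fixed components $Y_1,Y_2$ via Lemma \ref{lem:FHexc}, not to the extremal ones. Second, your argument for equalization---that the normal directions carry weight one ``because $\mu$ and $\nu$ are smooth blow-ups''---is invalid: replacing $\lambda$ by $\lambda^2$ leaves the blow-up geometry of $\psi$ untouched but destroys equalization, and equalization must be verified on \emph{all} fixed components, not only at the sink and source. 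In the paper this property comes for free, precisely because the action on $X$ is the restriction of an equalized action (a short grading, Table \ref{tab:short}) on the ambient rational homogeneous variety $\cD(J\cup\{i\})$, and equalization---equivalently, the absence of proper non-trivial isotropy groups---passes to invariant subvarieties. This is one more reason why the construction inside the homogeneous space must come first rather than being appended at the end.
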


%!TEX root = WORS5.tex

\section{Preliminaries on $\C^*$-actions}\label{sec:prelim}

Throughout the paper all the varieties will be defined over the field of complex numbers. This section contains some preliminary results, notation and conventions regarding $\C^*$-actions. We refer the interested reader to \cite{BB,CARRELL,RW,WORS1,WORS3,B_R} for details, original references, and further results. In particular we will recall the concept of geometric realization of a birational map, introduced in \cite{WORS4a}. We will stick to the case in which the variety $X$ admitting the $\C^*$-action is smooth and projective, as this will be the situation in the remaining sections.

\subsection{Generalities on $\C^*$-actions}\label{ssec:torusactions}
Let $X$ be a complex, smooth, projective variety admitting a non-trivial (morphical) action of the multiplicative group $\C^*$, that we will denote by:
$$
\C^*\times X\lra X \qquad (t,x)\mapsto tx.
$$

\subsubsection{} $X^{\C^*}\subset X$ will denote the set of fixed points of the $\C^*$-action, which is a closed, smooth subset of $X$, and $\cY$ the set of irreducible components of $X^{\C^*}$.

\subsubsection{} For every $x\in X$ we set $$x^{\pm}:=\lim_{t^{\pm 1}\to 0} tx \in X^{\C^*}$$ and call them the {\em sink} and the {\em source of the orbit} $\C^*x$, respectively. The rest of the fixed point components are called {\em inner}, and their set is denoted by $\cY^\circ$.

\subsubsection{} $Y_-,Y_+\in \cY$ will denote the {\em sink} and the {\em source} of the action, defined as the fixed point components satisfying that:
$$
x^{\pm}\in Y_{\pm}
$$ 
for the general point $x\in X$. Sometimes we will call the the extremal fixed point components of the action.

\subsubsection{} For every $Y\in \cY$, we denote by $X^\pm(Y)\subset X$ the {\em positive/negative BB-cell} (short form of Bia{\l}ynicki-Birula cell) associated to $Y$, defined as the $\C^*$-invariant subsets:
$$
X^\pm(Y):=\{x\in X|\,\,x^{\pm}\in Y\}.
$$

\subsubsection{}\label{sssec:BB} For every $Y\in \cY$, we denote by $\cN_{Y|X}$ the normal bundle of $Y$ in $X$. The action of $\C^*$ on $X$ induces a fiberwise linear $\C^*$-action on the fibers of $\cN_{Y|X}$ over $Y$, and we may decompose it as a direct sum of vector bundles 
$$\cN_{Y|X}=\cN^-(Y,X)\oplus \cN^+(Y,X),$$ 
according to the sign of the weights of the $\C^*$-action.
When the ambient variety $X$ is clear in the context, we will simply denote $\cN^\pm(Y):=\cN^\pm(Y,X)$. By the Bia{\l}ynicki-Birula theorem, we know that we have a $\C^*$-equivariant isomorphism $$\cN^\pm(Y)\simeq X^\pm(Y),$$ for every $Y\in \cY$. The rank of $\cN^\pm(Y)$, which is equal to $\dim(X^\pm(Y))-\dim(Y)$, will be denoted by $\nu^{\pm}(Y)$. Note that 
\begin{equation}\label{eq:nus}
\begin{array}{l}\nu^\mp(Y_\pm)=0,\\[2pt]\nu^+(Y)+\nu^-(Y)=\dim(X)-\dim(Y),\mbox{ for every }Y\in \cY,\\[2pt]
\nu^\pm(Y)=0 \mbox{ if and only if }Y=Y_\mp.
\end{array}\end{equation}
for every $Y\in \cY$.

\subsubsection{} A {\em linearization} of the action of $\C^*$ on a line bundle $L$ on $X$ is a fiberwise linear $\C^*$-action of $\C^*$ on $L$ such that the natural projection is $\C^*$-equivariant.  Given $Y\in \cY$, $\C^*$ acts on $L_{|Y}$ by multiplication with $\mu_L(Y)\in \Mo(\C^*):=\Hom(\C^*,\C^*)=\Z$,  called the {\em weight of the linearization on $Y$}. 

\subsubsection{} If $L$ is ample, the $\C^*$-action together with a linearization on $L$ will be referred to as a {\em $\C^*$-action on the polarized pair $(X,L)$}. In this case the minimum and maximum value of the weights are achieved at the sink and the source of the action, respectively. By multiplying the linearization with a character we may assume that $\mu_L(Y_-)=0$, and then $\delta:=\mu_L(Y_+)$ is called the {\em bandwidth} of the $\C^*$-action on $(X,L)$. Furthermore, the set 
$$\mu_L(\cY)=\{a_0=0,a_1,\dots,a_r=\delta\},\quad (a_0<a_1<\dots<a_r)$$
is called the set of {\em critical values} of the $\C^*$-action on $(X,L)$. The integer $r$ is called {\em criticality} of the action.

\subsubsection{} A linearization of the $\CC^*$-action on an ample line bundle $L$ over $X$ gives a weight decomposition on $\HH^0(X,L)$, that we write as: 
$$\HH^0(X,L)=\bigoplus_{u\in\Z}\HH^0(X,L)_u,$$
where $\HH^0(X,L)_u\subset \HH^0(X,L)$ stands for the vector subspace of $\C^*$-weight $u$.
If $L$ is globally generated, the extremal values for which $\HH^0(X,L)_u\neq \{0\}$ are $\mu_L(Y_-)=0$, $\mu_L(Y_+)=\delta$. 

\subsubsection{}\label{sssec:equalized}  The action of $\C^*$ on $X$ is said to be {\em equalized} if and only if the induced action on $\cN^\pm(Y)$ has all its weights equal to $\pm 1$; that is to say that the action on $\cN^\pm(Y)$ is faithful and fiberwise homothetical, for every $Y\in \cY$. Equivalently, the $\C^*$-action has no proper non-trivial isotropy groups.

\subsubsection{}\label{sssec:AMvsFM} If the $\C^*$-action is equalized, then the closure of any $1$-dim\-ens\-ional orbit is a smooth rational curve, whose $L$-degree (with respect to $L\in\Pic(X)$ ample) can be computed in terms of the weights at its extremal points: 
\begin{equation}\tag{AM vs FM}
L\cdot \overline{\C^*x}=\mu_L(x_+)-\mu_L(x_-).
\end{equation}

\subsubsection{} The action of $\C^*$ on $X$ is said to be of {\em B-type} if and only if $\nu^\pm(Y_\pm)=1$, i.e. if $Y_\pm$ are divisors in $X$.

\subsubsection{}\label{sssec:blowup} If the $\C^*$-action on $X$ is equalized, then it extends to a B-type $\C^*$-action on the blowup 
$$
\beta:X^\flat\to X,
$$
of $X$ along $Y_\pm$, whose sink and source are the exceptional divisors $\P_{Y_\pm}(\cN_{Y_\pm|X}^\vee)$.

\subsubsection{}\label{sssec:induced} If the $\C^*$-action on $X$ is equalized, then the Bia{\l}ynicki-Birula theorem implies that $\P_{Y_\pm}(\cN_{Y_\pm|X}^\vee)$ is a geometric quotient of the open set $X^{\pm}(Y_\pm)\subset X$. In particular, the intersection of these two open sets induces a birational map:
\begin{equation}\label{eq:birinduced}
\psi:\P_{Y_-}(\cN_{Y_-|X}^\vee)\dashrightarrow \P_{Y_+}(\cN_{Y_+|X}^\vee).
\end{equation}
Note that in the case in which the $\C^*$-action is of B-type, the map $\psi$ goes from $Y_-$ to $Y_+$. Note also that the birational maps induced by an equalized action on $X$ and on its blowup along its sink and source are the same.

\subsubsection{} Conversely, if $\psi:E_-\dashrightarrow E_+$ is a birational map, a variety $X$ together with a $\C^*$-action such that we have two isomorphisms $E_\pm\simeq \P_{Y_\pm}(\cN_{Y_\pm|X}^\vee)$ so that the induced birational map from $E_-$ to $E_+$ coincides with $\psi$ is called a {\em geometric realization} of $\psi$.

\subsection{Exceptional locus of the induced birational transformation}\label{ssec:Btype}

In this section we will describe the exceptional locus of the birational map associated to a faithful $\C^*$-action on a smooth projective variety $X$. For simplicity we will assume that the action is of B-type (see \ref{sssec:blowup} above), so that the $\C^*$-action induces a birational map:
$$
\psi:Y_-\dashrightarrow Y_+,$$
which assigns to a general point $y\in Y_{-}$ the limit for $t \to 0$ of the unique orbit having limit for $t^{-1}\to 0$ equal to $y$.

Given an inner fixed point component $Y\in \cY^\circ$ we consider the invariant closed subvarieties of $X$ defined recursively as follows:
$$
C_1^\pm(Y):=\ol{X^\pm(Y)},\qquad C_k^\pm(Y):=\ol{X^\pm(C_{k-1}^\pm(Y))}, \,\,k\geq 2
$$
Note that $C_{k-1}^\pm(Y)\subseteq C_k^\pm(Y)$, and that the inclusion is strict if and only if there exists a $1$-dimensional orbit $\C^*x$ not contained in $C_{k-1}^{\pm}(Y)$ such that $\lim_{t\to 0}t^{\pm 1}x\in C_{k-1}^\pm(Y)$. Then  for every inner fixed point component $Y$ there exists $m\geq 1$ such that $C_{m-1}^\pm(Y)= C_m^\pm(Y)$, and so we may define:
$$
C^{\pm}(Y):=\lim_{k\to+\infty}C_k^\pm(Y),\,\, Y\in\cY^\circ.
$$
In other words,  
$C^{\pm}(Y)$ can be defined as the union of all the connected $1$-cycles consisting of closures of $1$-dimensional orbits linking $Y$ with $Y_{\mp}$, whose existence is guaranteed by \cite[Corollary 2.15]{WORS1}. We finally set:
$$
Z_\mp(Y):=C^{\pm}(Y)\cap Y_{\mp}\subset Y_{\mp}, \,\,Y\in\cY^\circ,\quad Z_\mp:=\bigcup_{Y\in \cY^\circ}Z_\mp(Y)\subset Y_{\mp}.
$$

\begin{lemma} \label{lem:birational_map_b} In the above notation, the map $\psi$ restricts to an isomorphism:
$$
\psi:Y_{-}\setminus Z_{-}\lra Y_{+}\setminus Z_+,
$$
\end{lemma}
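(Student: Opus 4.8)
The plan is to recover $\psi$ as the isomorphism between the two geometric quotients of the big cell given by the Bia\l ynicki--Birula fibrations, and then to identify the open subsets of $Y_-$ and $Y_+$ over which these fibrations are defined with $Y_-\setminus Z_-$ and $Y_+\setminus Z_+$. First I would introduce the open invariant subset $X^\circ:=X^-(Y_-)\cap X^+(Y_+)$. Since $Y_-$ and $Y_+$ are distinct (hence disjoint) components of the smooth fixed locus $X^{\C^*}$, the set $X^\circ$ contains no fixed point and is a union of one-dimensional orbits $\C^*x$ with $x^-\in Y_-$ and $x^+\in Y_+$. By \ref{sssec:BB}, $X^-(Y_-)$ and $X^+(Y_+)$ are the total spaces of the line bundles $\cN^-(Y_-)$ and $\cN^+(Y_+)$, and the BB maps $p_\pm\colon x\mapsto x^\pm$ restrict on $X^\circ$ to the complements of the zero sections over the open subsets $U_\pm:=p_\pm(X^\circ)\subseteq Y_\pm$. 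Each $p_\pm|_{X^\circ}$ is then a $\C^*$-fibration whose fibres are single orbits, so both realize $U_-$ and $U_+$ as the geometric quotient $X^\circ/\C^*$; the induced isomorphism $U_-\to U_+$ sends $x^-\mapsto x^+$ and is therefore exactly $\psi$. It will remain to prove that $U_-=Y_-\setminus Z_-$ and, symmetrically, $U_+=Y_+\setminus Z_+$.

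For the inclusion $Y_-\setminus Z_-\subseteq U_-$ I would argue by contraposition. By the line-bundle structure of $X^-(Y_-)$, every $y\in Y_-$ is the source of a unique orbit $\C^*x_y$, and $y\in U_-$ exactly when $x_y^+\in Y_+$. If instead $y\notin U_-$, then $x_y^+$ is a fixed point with $\mu_L(x_y^+)>\mu_L(y)=0$ and different from $Y_+$, hence an inner component $Y'\in\cY^\circ$; thus $x_y\in X^+(Y')$, so $\overline{\C^*x_y}\subseteq C^+(Y')$ and $y=x_y^-\in C^+(Y')\cap Y_-\subseteq Z_-$. This direction is formal: it uses only the definition of $X^+(Y')$ and the fact that $C^+(Y')$ is closed.

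For the reverse inclusion $U_-\subseteq Y_-\setminus Z_-$, suppose $y\in U_-\cap Z_-$. Then $y$ lies on a connected one-cycle $\Gamma$ of closures of one-dimensional orbits linking some inner component $Y'$ with $Y_-$ (cf. \cite{WORS1}). Because $Y_-$ has minimal critical value, $y$ is the sink of no orbit, and by the B-type hypothesis it is the source of only $\C^*x_y$; likewise $z:=x_y^+\in Y_+$ is the sink of only $\C^*x_y$ and the source of none. Hence $\overline{\C^*x_y}$ is the unique orbit closure of $\Gamma$ meeting $y$, its other endpoint is again an extremity of $\Gamma$, and the connected component of $\Gamma$ through $y$ reduces to $\overline{\C^*x_y}$, whose only fixed points are $y\in Y_-$ and $z\in Y_+$. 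This contradicts $\Gamma$ meeting the inner component $Y'$, so $U_-\cap Z_-=\emptyset$. Running the same two arguments with $\pm$ exchanged yields $U_+=Y_+\setminus Z_+$, and the isomorphism $\psi\colon U_-\to U_+$ built above is the desired one. I expect this last inclusion $U_-\subseteq Y_-\setminus Z_-$ to be the main obstacle: unlike the defined-ness direction it is not a consequence of $C^+(Y')$ being a closure, and it genuinely requires both the description of $C^+(Y')$ as a union of orbit chains linking $Y'$ to $Y_-$ and the B-type fact that each point of $Y_\pm$ lies on a single orbit and is extremal for $\mu_L$, in order to rule out that the chain through a point of $U_-$ detours through an inner fixed component.
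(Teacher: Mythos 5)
Your proposal is correct and takes essentially the same route as the paper: both identify $Y_-\setminus Z_-$ and $Y_+\setminus Z_+$ with the geometric quotient of the common open invariant set of one-dimensional orbits running from $Y_-$ to $Y_+$, using the B-type Bia{\l}ynicki--Birula line-bundle structure on $X^\pm(Y_\pm)$. The only difference is one of detail: the set-theoretic identification of these quotients with $Y_\pm\setminus Z_\pm$, which you prove carefully in both inclusions (the harder one via the uniqueness of the orbit through each point of $Y_\pm$ and the chain description of $C^\pm(Y)$), is compressed in the paper into the one-line assertion that the two open sets $U_\pm$ coincide ``by definition''.
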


\begin{proof}
Denote by $\pi_\pm:X^\pm(Y_\pm)\simeq \cN_{Y_\pm|X}\to Y_\pm$ the projection map given by the Bia{\l}ynicki-Birula theorem. The open subsets:
$$U_\pm=X^\pm(Y_\pm)\setminus \big(Y_\pm \cup \pi_\pm^{-1}(Z_\pm)\big)$$ are equal since, by definition, they can be identified with the set of orbits of the action having limiting points at $Y_{-}$ and $Y_{+}$. It then follows that their quotients by the action of $\C^*$, $Y_-\setminus Z_-$ and $Y_+\setminus Z_+$,  are isomorphic.
\end{proof}

This statement tells us that the exceptional locus of $\psi$ is contained in $$\bigcup_{Y\in\cY^\circ} Z_{-}(Y).$$ However, in general $\Exc(\psi)$ does not coincide with this set; in fact one may show the following: 

\begin{lemma}\label{lem:defincodim1}
Let $X$ be a smooth, projective variety together with an equalized B-type $\C^*$-action, and let $\cC$ be the set of inner fixed point components $Y$ of the action with  $\nu^{-}(Y)>1$. Then:
$$
\Exc(\psi)\subseteq\bigcup_{Y \in \cC} Z_-(Y).
$$
\end{lemma}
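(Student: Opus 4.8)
The plan is to upgrade Lemma~\ref{lem:birational_map_b}. Since $\psi$ is already an isomorphism on $Y_-\setminus Z_-$, it suffices to prove that $\psi$ extends to a morphism (i.e. that $y\notin\Exc(\psi)$) at every point
$$y\in Z_-\setminus\bigcup_{Y\in\cC}Z_-(Y).$$
The name of the statement records the shape of the outcome: the centres we are forced to keep have codimension $\nu^-(Y)\ge 2$ in $Y_-$, so what we are really proving is that $\psi$ is defined in codimension one.

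First I would reconstruct $\psi(y)$ orbit by orbit. By the Bia{\l}ynicki--Birula theorem the sink cell $X^-(Y_-)\simeq\cN^-(Y_-)$ is a line bundle over $Y_-$, because $\nu^-(Y_-)=1$; hence over $y$ there is a single orbit $\C^*x$ with $x^-=y$. Following this orbit upward, and continuing through any inner component it meets (the relevant orbits exist by \cite[Corollary~2.15]{WORS1}), produces a chain of $1$-dimensional orbits from $y$ towards $Y_+$. Each time such a chain reaches an inner component $Y$ — which forces $y\in Z_-(Y)$ — the admissible continuations towards the source are exactly the orbits of $X^-(Y)\setminus Y\simeq\cN^-(Y)\setminus Y$ lying over the point at which the chain arrives, i.e. they are parametrized by the fibre of $\P(\cN^-(Y))\to Y$. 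For our chosen $y$ every inner component met along the way satisfies $\nu^-(Y)=1$, so $\P(\cN^-(Y))=Y$ and this fibre is a single point. The continuation is therefore forced at each step, the upward chain issuing from $y$ is unique, and it determines an unambiguous value $\psi(y)\in Y_+$.

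To promote this pointwise recipe to an actual morphism near $y$ I would use the factorization of $\psi$ obtained by letting the linearization cross the critical values $a_1<\dots<a_{r-1}$ (equivalently, an induction on the criticality), as in the variation-of-GIT picture underlying \cite{WORS1,WORS3}. Crossing the wall at $a_i$ carrying an inner component $Y$ replaces, on the side of the sink, the subvariety $\P(\cN^+(Y))$ by $\P(\cN^-(Y))$; when $\nu^-(Y)=1$ one has $\P(\cN^-(Y))=Y$ while $\P(\cN^+(Y))$ is a divisor, so the crossing is a divisorial contraction, hence a genuine morphism from the $Y_-$ side and introduces no indeterminacy there. Along the unique trajectory of $y$ every wall is thus either an isomorphism — precisely when its flipping centre, transported down into $Z_-(Y)$, is avoided — or, for the components with $\nu^-(Y)=1$ that we do meet, such a contraction. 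Being a composition of local isomorphisms and contractions, $\psi$ is regular in a neighbourhood of $y$, which gives the inclusion; and since each remaining centre has codimension $\nu^-(Y)\ge 2$, the indeterminacy locus lies in codimension at least two.

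The main difficulty is the bookkeeping hidden in the word \emph{transported}. One must check that the indeterminacy centre created by each wall with $\nu^-(Y)>1$, pulled back through the lower crossings, is contained in $Z_-(Y)$ — this is exactly what the recursive definition of $C^+(Y)$ through the cells $C_k^+(Y)$ is designed to encode — and, conversely, that a wall with $\nu^-(Y)=1$ never manufactures new indeterminacy in the composite, even when its contracted divisor meets loci modified by later walls. A further point demanding care is the presence of several inner components at a single critical value $a_i$, where the local model is a product over the disjoint components and the two claims above must be verified componentwise. Establishing these transport-of-centre statements, rather than the clean uniqueness argument for the value $\psi(y)$, is where the real work lies.
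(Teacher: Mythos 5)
Your first step --- the existence and uniqueness of the chain $\Gamma_1,\dots,\Gamma_r$ of orbit closures joining $y$ to $Y_+$, obtained by observing that a second continuation at an inner component $Y$ would force $\nu^-(Y)>1$, hence $Y\in\cC$ and $y\in Z_-(Y)$ --- is exactly the paper's opening claim, proved in the same way. The divergence is in how the chain is used. The paper never factors $\psi$ through the intermediate geometric quotients: it takes an arbitrary holomorphic arc $\gamma(s)\to y$ with $\gamma(s)\in Y_-\setminus Z_-$ for $s\neq 0$, and studies the limit of the orbit closures $\ol{O(s)}$ over $\gamma(s)$ as $s\to 0$. That limit is a connected invariant $1$-cycle through $y$ meeting $Y_+$, of $L$-degree equal to the bandwidth $\delta$; by the AM vs FM formula (\ref{sssec:AMvsFM}) and equalization its multiplicities must all be $1$ and the weights must strictly increase along it, so it can only be $\Gamma_1+\dots+\Gamma_r$. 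Hence $\psi(\gamma(s))\to x_+$ for every arc, and $\psi$ extends at $y$. One degeneration argument thus does globally what your wall-by-wall scheme attempts step by step.

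Your scheme is viable, but as written it stops precisely at the decisive point, which you yourself flag as ``where the real work lies''; that is a genuine gap, not a detail. Two things are missing. (i) The transport statement: one must prove, by induction on the walls, that the successive images of $y$ in the intermediate quotients are exactly the classes $[\Gamma_j]$ of the orbits of your unique chain; granting this, such a class lies in $\P(\cN^+(Y))$ only if the chain passes through $Y$, and then $y\in Z_-(Y)$ by the recursive definition of $C^+(Y)$, so every wall actually met has $\nu^-=1$. (ii) Regularity of a crossing at a component with $\nu^-(Y)=1$ is not a formal consequence of the words ``divisorial contraction'': one needs that the morphism from the upper quotient to the wall quotient is proper and has a single point (namely $\P(\cN^-(Y)_{x_j})$) in its fibre over the image of the fixed point $x_j$, hence is an isomorphism near it by Zariski's main theorem, which uses normality (indeed smoothness, coming from equalization) of the quotients; the identification of the image of $[\Gamma_j]$ with $[\Gamma_{j+1}]$ then follows because both orbit closures contain $x_j$. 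Both points can be established --- your own uniqueness claim is what makes (i) work --- so the gap is fillable, but the resulting bookkeeping is strictly more machinery than the paper's single limit computation. Finally, your closing codimension remark is off target: the sets $Z_-(Y)$, $Y\in\cC$, need not have codimension $\nu^-(Y)$ in $Y_-$ (the recursive closures $C^+_k(Y)$ may grow in dimension), and the fact that $\psi$ is defined in codimension one is automatic for any rational map from a smooth variety to a projective one; the content of the lemma is the identification of the specific sets $Z_-(Y)$ containing $\Exc(\psi)$.
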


\begin{proof} 
Let $x_-\in Y_-\setminus \bigcup_{Y \in \cC} Z_-(Y)$. We start by claiming that there exist a unique sequence $\Gamma_1,\Gamma_2,\dots,\Gamma_r$ of closures of $1$-dimensional orbits 
such that $x_0:=x_-\in \Gamma_1$, every $\Gamma_i$ intersects $\Gamma_{i+1}$ at a fixed point $x_i$, $\Gamma_r$ intersects $Y_+$ at a point $x_r=x_+$, and $\mu_L(x_i)<\mu_L(x_{i+1})$ for every $i$. 

In order to prove the claim, we note first that, by the Bia\l ynicki-Birula decomposition, there exists a unique $1$-dimensional $\C^*$-orbit  having $x_-$ as sink; let us denote by $\Gamma_1$ its closure, by $x_1$ its source, and by $Y_1$ the unique fixed point component containing $x_1$. If there were at least two $1$-dimensional orbits having $x_1$ as sink, the Bia\l ynicki-Birula decomposition would tell us that $\nu^-(Y_1)>1$, that is $Y_1\in\cC$; but then $x_-\in Z_-(Y_1)$ would belong to  $\bigcup_{Y \in \cC} Z_-(Y)$, a contradiction. We proceed now recursively, proving that there exists a unique $1$-dimensional orbit having $x_1$ as sink, whose closure is denoted $\Gamma_2$, etc. The fact that the weight $\mu_L(x_i)$ grows at every step follows by \ref{sssec:AMvsFM}.

We now prove that the map $\psi$ extends to $x_-$. To this end, let us consider the unique sequence of $\C^*$-invariant curves $\Gamma_1,\dots,\Gamma_r$ linking $x_-$ with $x_+\in Y_+$ as above. Let $\gamma(s)$ be a holomorphic curve converging to $x_-$ when $s$ goes to $0$, with $\gamma(s)\in Y_-\setminus Z_-$ for $s\neq 0$. For every $s\neq 0$, the image $\psi(\gamma(s))$ is defined as the source of the unique orbit $O(s)$ having $\gamma(s)$ as sink when $t$ goes to $\infty$. When $s$ goes to $0$, the closure  $\overline{O(s)}$ must converge to an effective and connected  $\C^*$-invariant $1$-cycle $\Gamma'$ passing by $x_-$ and meeting $Y_+$; moreover, by the AM vs FM formula (\ref{sssec:AMvsFM}) we know that $L \cdot \Gamma' = L \cdot \overline{O(s)}$ equals the bandwidth $\delta$. In particular it contains a $1$-cycle $\sum_{i=1}^m r_i \Gamma_i'$ with $x_- \in \Gamma_1'$, $\Gamma_i' \cap \Gamma_{i+1}' =\{x_i'\}, \Gamma_m' \cap Y_+ \not = \emptyset$.
By \ref{sssec:AMvsFM}, the equalization of the action, and the equality $L \cdot \Gamma' = \delta$,  we may conclude that $r_i=1$ for every $i$, that $\Gamma' = \sum_{i=1}^m \Gamma_i'$ and that $\mu_L(x_i') < \mu_L(x_{i+1}')$ for every $i$. In other words, $\Gamma'$ equals $\Gamma_1+\dots+\Gamma_r$, and from this it follows that 
$$\lim_{s\to 0}\psi(\gamma(s))=(\Gamma_1+\dots+\Gamma_r)\cap Y_+=x_+.$$
In particular, since this limit does not depend on the choice of the curve $\gamma$, it follows that the map $\psi$ extends to $x_-$. This finishes the proof.
\end{proof}

\subsection{Local description of $\C^*$-invariant divisors}\label{ssec:local}

We end this section by describing locally the invariant divisors determined by sections $\sigma\in \HH^0(X,L)_u$, paying attention to their smoothness. The following result is a restatement of \cite[Lemma~2.17]{BWW}; the notation we will use is consistent with that paper.

\begin{lemma}\label{lem:localdes}
 Let $X$ be a smooth proper variety together with a $\C^*$-action, linearized on a line bundle $L$ on $X$. Let $Y\subset X$ be an irreducible fixed point component, and $y\in Y$ be a point. Let $D_\sigma\subset X$ be a $\CC^*$-invariant divisor associated to a $\CC^*$-equivariant section $\sigma\in\HH^0(X,L)_u$, with $u\in \ZZ$. Then there exists local coordinates $x_1,\dots, x_{d_+},y_1,\dots, y_{d_0}, z_1,\dots,z_{d_-}$ around $y=\{x_i=y_j=z_k=0\}$ such that $Y=\{x_i=z_k=0\}$, and the divisor $D_\sigma$ is described as the zero set of a power series $f\in\CC[[x_i,y_j,z_k]]$, homogeneous  of degree $\mu_L(Y)-u$ with respect to the grading of $\CC[[x_i,y_j,z_k]]$ induced by the $\C^*$-action.
\end{lemma}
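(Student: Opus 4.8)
The plan is to pass to the completed local ring $\widehat{\cO}_{X,y}$, where the $\C^*$-action linearizes, and then to read off the homogeneity of the equation of $D_\sigma$ by comparing the weight of $\sigma$ with that of a local equivariant frame of $L$. The whole argument is a weight bookkeeping once the correct eigen-coordinates are in place.

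First I would linearize the action at $y$. The $\C^*$-action induces an action on $\widehat{\cO}_{X,y}$, hence a linear action on the cotangent space $\fm/\fm^2\cong (T_yX)^\vee$, which splits into weight spaces because $\C^*$ is linearly reductive. Lifting a weight basis of $\fm/\fm^2$ to eigenvectors in $\fm$ --- again available by linear reductivity --- yields formal coordinates that are $\C^*$-eigenfunctions. I would label $x_1,\dots,x_{d_+}$ those corresponding to the positive tangent weights (the positive normal directions, so $d_+=\nu^+(Y)$), $z_1,\dots,z_{d_-}$ those of negative tangent weight ($d_-=\nu^-(Y)$), and $y_1,\dots,y_{d_0}$ those of weight zero ($d_0=\dim Y$). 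Since $Y$ is the smooth component of $X^{\C^*}$ through $y$, its tangent space is the zero-weight part $(T_yX)_0$ and its ideal in $\widehat{\cO}_{X,y}$ is generated by the nonzero-weight eigenfunctions; hence $Y=\{x_i=z_k=0\}$ in these coordinates, as required.

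Next I would linearize $L$ near $y$ by choosing a local equivariant frame $e$. The fibrewise action on $L_y$ has weight $\mu_L(Y)$ by definition, so $e$ is an eigen-section of weight $\mu_L(Y)$. Writing $\sigma=f\,e$ with $f\in\widehat{\cO}_{X,y}$, the hypothesis $\sigma\in\HH^0(X,L)_u$ forces $f$ to be an eigenfunction: comparing $t\cdot\sigma=t^u\sigma$ with $t\cdot(f\,e)=(t\cdot f)(t\cdot e)$ shows that $f$ has weight $u-\mu_L(Y)$ as a function. The grading on $\C[[x_i,y_j,z_k]]$ assigns to each coordinate the weight of the corresponding tangent direction, which is opposite to its weight as a function; therefore $f$ is homogeneous of degree $\mu_L(Y)-u$. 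As $D_\sigma$ is cut out locally by $f$, this is the assertion. A useful sanity check: $\sigma$ fails to vanish on $Y$ precisely when $f$ has a nonzero constant term, i.e. degree $0$, matching the constraint $u=\mu_L(Y)$ for sections nonvanishing on a fixed component.

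The only genuinely delicate point is the equivariant linearization underlying the first paragraph: one needs both that the action on $\widehat{\cO}_{X,y}$ admits a compatible system of eigen-coordinates and that the nonzero-weight ones cut out $Y$. Both follow from linear reductivity of $\C^*$ together with the smoothness of $X^{\C^*}$, and this is exactly the input taken from \cite[Lemma~2.17]{BWW}; everything else is weight bookkeeping, where the single subtlety is the sign relating the weight of a monomial \emph{as a function} to its degree in the \emph{grading}.
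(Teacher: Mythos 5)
Your proof is correct, and it terminates at exactly the same functional equation as the paper's proof, namely $f(t(x_i,y_j,z_k))=t^{\mu_L(Y)-u}f(x_i,y_j,z_k)$; the difference is in how the equivariant local picture is produced. The paper does not linearize the base and trivialize $L$ separately: after twisting the linearization so that $\mu_L(Y)>0$ (which makes the zero section $Y\subset L$ a fixed point component of the action on the total space), it applies the Bia{\l}ynicki-Birula linearization theorem \cite[Theorem~2.5]{BB} once, to the total space of $L$, obtaining an invariant neighborhood equivariantly isomorphic to $(Y\cap U)\times V$ with $V\simeq \cN_{Y|X,y}\oplus L_y$; the section then appears as the graph $(x_i,y_j,z_k)\mapsto(x_i,y_j,z_k,f(x_i,y_j,z_k))$, and the homogeneity of $f$ is read off from equivariance of the graph. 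You instead linearize formally on $X$ itself (weight splitting of $\fm/\fm^2$ plus linear reductivity to lift eigen-coordinates to $\widehat{\cO}_{X,y}$) and handle $L$ via an equivariant eigen-frame $e$ of weight $\mu_L(Y)$: your frame is the dual of the paper's fiber coordinate $\lambda$, and writing $\sigma=f\,e$ is the same as writing $\sigma$ as a graph, so the two computations coincide. What your route buys: no normalization of $\mu_L(Y)$ and no passage to the total space are needed, and the purely formal setting is all that a statement about power series requires (note that the paper, too, ends up working in $\C[[x_i,y_j,z_k]]$). What the paper's route buys: a single citation simultaneously produces the adapted coordinates, the identification $Y=\{x_i=z_k=0\}$, and the weight-$\mu_L(Y)$ fiber direction, inside a genuine invariant neighborhood rather than only a formal one. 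Two points you should make explicit: the existence of the eigen-frame (take any local generator of $L\otimes\widehat{\cO}_{X,y}$ and replace it by its weight-$\mu_L(Y)$ component; its value at $y$ is unchanged, so it is still a generator by Nakayama), and the attribution --- the linearization input in the paper's own proof is \cite[Theorem~2.5]{BB}, while \cite[Lemma~2.17]{BWW} is what the whole lemma restates, not its input. Your sign bookkeeping (degree of a coordinate equals its tangent weight, opposite to its weight as a function) agrees with the paper's convention, which is why you correctly land on degree $\mu_L(Y)-u$; this is worth stating carefully, since the paper itself slips into the opposite sign $u-\mu_L(Y)$ in the proof of Corollary \ref{cor:localdes}, harmlessly, because only $|u-\mu_L(Y)|$ is used there.
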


\begin{proof}
Without loss of generality, we may assume that $\mu_L(Y)>0$. We consider the $\C^*$-action on the  line bundle $L$, identifying $X$ and $Y$ with their corresponding images into $L$ via the zero section. Note that in this way $X\subset L$ is a $\C^*$-invariant subvariety and, by the hypothesis on $\mu_L$, $Y\subset L$ is a $\C^*$-fixed point component. 

Given a point $y\in Y$, we may apply \cite[Theorem~2.5]{BB} to find a $\C^*$-invariant neighborhood $U$ of $y\in L$ equivariantly isomorphic to $(Y\cap U)\times V$, where $V$ is a $\C^*$-module. By differentiating at $y$, $V$ may be identified with $\cN_{Y|L,y}=\cN_{Y|X,y}\oplus L_{y}$. The action of $\C^*$ on $V$ diagonalizes with weights $p_1,\dots, p_{d_+}, n_1,\dots,n_{d_-},\mu_L(Y)$ ($p_i>0, n_k<0$), on a set of coordinates $(x_1,\dots, x_{d_+}, z_1,\dots,z_{d_-},\lambda)$. 
We may complete this set of coordinates with local coordinates $y_1,\dots, y_{d_0}$ of $Y$ around $y$, and consider the rings of local coordinates $\C[[x_i,y_j,z_k]]$ of $X$ around $y$, and $\C[[x_i,y_j,z_k,\lambda]]$ of $L$ around $y$.

In these local  coordinates a section $\sigma \in \HH^0(X,L)$ reads as $\sigma(x_i,y_j,z_k)=(x_i,y_j,z_k, f(x_i,y_j,z_k))$ where $f\in\CC[[x_i,y_j,z_k]]$.
The proof is then concluded by noting that saying that $\sigma$ is invariant of weight $u$ means that 
$$f(t(x_i,y_j,z_k))=t^{\mu_L(Y)-u}f(x_i,y_j,z_k).$$
\end{proof}

A straightforward consequence of the above Lemma is the following:

\begin{corollary}\label{cor:localdes}
In the situation of the above lemma suppose that the action is equalized. Then the function $f$ vanishes along $Y$ with multiplicity $\geq |u-\mu_L(Y)|$. If moreover $Y$ is either the sink or source, then the inequality becomes an equality.
\end{corollary}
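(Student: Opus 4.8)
The plan is to translate the equalization hypothesis into the explicit form of the grading furnished by Lemma~\ref{lem:localdes}, and then to reduce everything to an elementary estimate on monomial exponents.

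First I would record the weights. In the local coordinates of Lemma~\ref{lem:localdes} the $\C^*$-action diagonalizes with positive weights $p_1,\dots,p_{d_+}$ on the $x_i$, negative weights $n_1,\dots,n_{d_-}$ on the $z_k$, and weight $0$ on the $y_j$, the latter being coordinates along $Y$; moreover $d_+=\nu^+(Y)$ and $d_-=\nu^-(Y)$. The equalization hypothesis (\ref{sssec:equalized}) forces all the weights on $\cN^\pm(Y)$ to equal $\pm 1$, so $p_i=1$ and $n_k=-1$ for every $i,k$. Consequently the grading induced on $\C[[x_i,y_j,z_k]]$ gives degree $+1$ to each $x_i$, degree $-1$ to each $z_k$, and degree $0$ to each $y_j$.

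Writing $f=\sum_{\alpha,\beta,\gamma}c_{\alpha\beta\gamma}\,x^\alpha y^\beta z^\gamma$, the homogeneity of $f$ in degree $\mu_L(Y)-u$ means that every monomial occurring in $f$ satisfies $|\alpha|-|\gamma|=\mu_L(Y)-u$, where $|\alpha|=\sum_i\alpha_i$ and $|\gamma|=\sum_k\gamma_k$. Since $Y=\{x_i=z_k=0\}$, the multiplicity of $f$ along $Y$ is the order of $f$ with respect to the ideal $I_Y=(x_i,z_k)$, i.e. the minimum of $|\alpha|+|\gamma|$ over the monomials appearing in $f$. The first assertion now follows from the elementary inequality $|\alpha|+|\gamma|\geq \bigl|\,|\alpha|-|\gamma|\,\bigr|=|\mu_L(Y)-u|$, valid for nonnegative integers, which yields $\mult_Y(f)\geq|u-\mu_L(Y)|$.

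For the equality statement I would invoke \eqref{eq:nus}. If $Y=Y_+$ then $\nu^-(Y_+)=0$, so $d_-=0$ and no $z$-variables occur; hence $|\gamma|=0$ for every monomial and $|\alpha|+|\gamma|=|\alpha|=\mu_L(Y)-u$ identically. Symmetrically, if $Y=Y_-$ then $\nu^+(Y_-)=0$, so $d_+=0$, $|\alpha|=0$, and $|\alpha|+|\gamma|=|\gamma|=u-\mu_L(Y)$. In either case every monomial saturates the inequality, so $\mult_Y(f)=|u-\mu_L(Y)|$ as soon as $f\not\equiv 0$ near $y$ (the case $f\equiv 0$ being vacuous). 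I do not anticipate a real obstacle here: the whole argument is the bookkeeping of monomial degrees, and the only delicate point is matching the vanishing of $\nu^\mp(Y_\pm)$ recorded in \eqref{eq:nus} with the absence of the $x$- or $z$-variables at the source and the sink, which is precisely what forces equality there.
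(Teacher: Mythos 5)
Your proof is correct and follows essentially the same route as the paper: expand $f$ as a power series in the coordinates of Lemma~\ref{lem:localdes}, use equalization to assign degrees $\pm 1$ to the $x_i$, $z_k$, deduce the bound from $|\alpha|+|\gamma|\geq\bigl||\alpha|-|\gamma|\bigr|$, and obtain equality at the extremal components because \eqref{eq:nus} kills one of the two sets of variables there. The only cosmetic difference is the sign convention for the homogeneity degree (the paper itself is inconsistent between the lemma and the corollary on this point), which is immaterial once absolute values are taken.
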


\begin{proof}
By  Lemma \ref{lem:localdes}, the function $f$ can be written as a series:
$$
f=\sum_{I,J,K} a_{IJK}x^Iy^Jz^K
$$
where 
$$
x^I=x_1^{i_1}\dots x_{d_+}^{i_{d_+}},\quad y^J=y_1^{j_1}\dots y_{d_+}^{j_{d_0}},\quad z^K=z_1^{k_1}\dots z_{d_-}^{k_{d_-}},
$$
homogeneous of degree $u-\mu_L(Y)$ with respect to the action of $\C^*$. Set $\ol{I}$, $\ol{K}$ to be the sum of the exponents in $I,K$, respectively. Since the action is equalized, we have that $\deg x_i=1$, $\deg z_k=-1$ for all $i$ and $k$, and so 
$$
u-\mu_L(Y)=\deg(f)=\deg(x^Iy^Jz^K)=\ol{I}-\ol{K}, \mbox{ for }a_{IJK}\neq 0.
$$ 
Then the vanishing multiplicity of $f$ along $Y$, which is $\min\{\ol{I}+\ol{K},\,\,a_{IJK}\neq 0\}$ is bigger than or equal to $|\ol{I}-\ol{K}|=|u-\mu_L(Y)|$. For the second part we simply note that if $Y$ is an extremal component, then either $I$ or $K$ are empty. 
\end{proof}

The following statement is an application of the above result to a particular situation, in the form of a smoothness criterion for invariant sections of $L$.

\begin{proposition}\label{prop:smooth}
Let $(X,L)$ be a smooth polarized pair with an equalized $\C^*$-action of bandwidth two. Let $\sigma\in\HH^0(X,L)_1$ be a section, with zero locus $D_\sigma$, such that $D_\sigma$ is smooth at the points of $D_\sigma\cap Y_1$. Then the differential $d\sigma$ defines sections of $N^\vee_{Y_\pm/X}\otimes L$; if these sections are nowhere vanishing then $D_\sigma\subset X$ is smooth and the induced action on $(D_\sigma,L_{|D_\sigma})$  has bandwidth two. 
\end{proposition}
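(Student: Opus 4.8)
The plan is to reduce the global smoothness of $D_\sigma$ to a statement at the finitely many fixed point components, and there to read off smoothness from the normal form furnished by Lemma \ref{lem:localdes} and Corollary \ref{cor:localdes}. Since the action has bandwidth two and $\sigma$ has weight one, the critical values are forced to be $\{0,1,2\}$, so $X^{\C^*}=Y_-\sqcup Y_1\sqcup Y_+$, where $Y_1$ denotes the union of the inner components, all of weight one. By Corollary \ref{cor:localdes} the local equation $f$ of $D_\sigma$ vanishes along $Y_\pm$ with multiplicity exactly $|u-\mu_L(Y_\pm)|=1$; in particular $Y_\pm\subset D_\sigma$, and the first-order term of $\sigma$ along $Y_\pm$ is a well-defined section $s_\pm:=d\sigma|_{Y_\pm}$ of $N^\vee_{Y_\pm/X}\otimes L$. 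This already yields the first assertion.

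For the smoothness I would first treat the extremal components. Near a point $y\in Y_-$ there are local coordinates $(x_i,y_j)$ with $Y_-=\{x_i=0\}$, all $x_i$ of $\C^*$-weight $+1$ (by equalization and $\nu^-(Y_-)=0$) and all $y_j$ of weight $0$; since by Lemma \ref{lem:localdes} the local equation $f$ is $\C^*$-homogeneous and, by Corollary \ref{cor:localdes}, vanishes along $Y_-$ to order exactly one, each of its monomials is a single $x_i$ times a function of $y$, that is $f=\sum_i a_i(y)\,x_i$. Hence $df|_y=\sum_i a_i(y)\,dx_i$ is precisely the value of $s_-$ at $y$ under the identification $N^\vee_{Y_-/X}=\langle dx_i\rangle$; the hypothesis that $s_-$ be nowhere vanishing thus gives $df|_y\neq 0$, so $D_\sigma$ is smooth along $Y_-$. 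The same computation near $Y_+$ (now with negative-weight normal coordinates $z_k$, so that $f=\sum_k b_k(y)\,z_k$) shows that $D_\sigma$ is smooth along $Y_+$ whenever $s_+$ is nowhere vanishing. Combined with the hypothesis that $D_\sigma$ be smooth along $D_\sigma\cap Y_1$, this proves that $D_\sigma$ is smooth at every fixed point it contains.

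To upgrade this to smoothness everywhere I would invoke the $\C^*$-invariance of $D_\sigma$, hence of its singular locus $\Sing(D_\sigma)$, a closed $\C^*$-invariant subset of the projective variety $X$. Were it nonempty it would contain a fixed point, obtained as a limit $t^{\pm1}\to 0$ of any of its points, contradicting the previous step; therefore $\Sing(D_\sigma)=\emptyset$ and $D_\sigma$ is smooth. For the bandwidth, since $Y_-$ and $Y_+$ are contained in $D_\sigma$ and $L|_{D_\sigma}$ is ample, the weights of the induced linearization on $D_\sigma^{\C^*}\subset Y_-\cup Y_1\cup Y_+$ lie in $\{0,1,2\}$ and attain both $0$ and $2$; as the minimal and maximal weights are realized at the sink and the source (Section \ref{ssec:torusactions}), the induced action on $(D_\sigma,L|_{D_\sigma})$ has sink in $Y_-$, source in $Y_+$, and bandwidth $2-0=2$.

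I expect the main subtlety to be the identification, in the second paragraph, of $df|_{Y_\pm}$ with the conormal sections $s_\pm$: this rests on the fact that, by equalization together with Corollary \ref{cor:localdes}, the local equation $f$ is exactly linear in the normal coordinates --- not merely of leading order one --- so that its differential along $Y_\pm$ receives no contribution from the directions tangent to $Y_\pm$. The hypotheses that $s_\pm$ be nowhere vanishing and that $D_\sigma$ be smooth along $Y_1$ cannot be removed, as they control precisely the three strata of the fixed locus on which the argument relies.
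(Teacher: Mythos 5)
Your proof is correct and follows essentially the same route as the paper's: reduce smoothness to the fixed locus via $\C^*$-invariance of the singular locus, use the hypothesis along $Y_1$, and at $Y_\pm$ use Lemma \ref{lem:localdes} and Corollary \ref{cor:localdes} to see that the local equation is linear in the normal coordinates, identifying $df|_{Y_\pm}$ with the sections $s_\pm$ of $N^\vee_{Y_\pm/X}\otimes L$, whose non-vanishing gives smoothness. The only differences are cosmetic: your sign convention at the sink ($\nu^-(Y_-)=0$, normal weights $+1$) is the opposite of the paper's ($\nu^+(Y_-)=0$), which is harmless, and you spell out the bandwidth-two claim, which the paper's proof leaves implicit.
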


\begin{remark}\label{rem:smooth}
The condition on the non-vanishing of the section of $N^\vee_{Y_\pm/X}\otimes L$ defined by $\sigma$ is satisfied for a general $\sigma$ if $L$ is very ample and, either $\dim Y_\pm<\frac{1}{2}\dim X$, or $\dim Y_\pm=\frac{1}{2}\dim X$ and the top Chern class of $N^\vee_{Y_\pm}\otimes L$ is zero. 
\end{remark}

\begin{proof}[Proof of Proposition \ref{prop:smooth}]
Since the section $\sigma$ is $\C^*$-invariant, it is enough to
check its smoothness at the fixed point locus. In fact, the singular locus of the section is closed and $\C^*$-invariant; then if it is non-empty, it contains a singular fixed point.  We will check smoothness locally so that we can write the expansion of $\sigma$ in coordinates.

By assumption we just need to check the smoothness of $D_\sigma$  at its intersection with the source (smoothness at the sink is analogous). We then take $y$ to be a point of the source $Y_+$, and use Lemma \ref{lem:localdes} to obtain local coordinates 
$y_i's$  of weight zero and $z_j's$ of weight one. Therefore, if
 $f$ is a local description of $\sigma$, it vanishes with multiplicity one at $Y_+$, by Corollary \ref{cor:localdes}. In particular, $df_{|Y_+}$ is not identically zero, and the zero locus of $f$ is singular at the points of $Y_+$ on which this differential is zero. 

 Moreover, since $f$ vanishes at $Y_+$, we may write:
 $$
 df_{|Y_+}=\sum_k f_k(y_1,\dots,y_{d_0}) dz_k.
$$
In other words $df_{|Y_+}$, as a section of $(\Omega_X\otimes L)_{|Y_+}$, lies in the kernel of the differential map $(\Omega_X\otimes L)_{|Y_+}\to \Omega_{Y_+}\otimes L_{|Y_+}$, which is $N^\vee_{{Y_+}/X}\otimes L_{|Y_+}$. This finishes the proof.
\end{proof}

%!TEX root = WORS5.tex

\section{Torus actions on rational homogeneous spaces}\label{sec:examples}

Representation theory provides a framework in which one may construct many examples of projective varieties supporting $\C^*$-actions (see \cite[II, Chapter 3]{BBC}), that can be then thought of as geometric realizations of some birational transformations.  This Section is devoted to the description of  
$\C^*$-actions on rational homogeneous varieties. We will pay special attention to the ones that have isolated extremal fixed points, that give rise to Cremona transformations whenever they are equalized (that we will study in Section \ref{sec:examples2} below). In particular, we will show how to re-construct in this setting the geometric realizations of the special Cremona transformations within the list of Ein and Shepherd-Barron. 

\subsection{Notation and basic facts on rational homogeneous varieties}\label{ssec:notation}

In this section we will recall some basic facts on rational homogeneous varieties, and introduce the notation we will use to describe them. We will use some standard representation theory of semisimple algebraic groups, for which we refer the interested reader to \cite{Hum,Hum1}. 

\subsubsection*{Rational homogeneous varieties}\label{sssec:RHvar}

Let $G$ be a semisimple algebraic group and $H\subset G$ a Cartan subgroup, with associated Lie algebras $\fh\subset\fg$; assume that $G$ is the adjoint group of $\fg$, so that the group $\Mo(H):=\Hom(H,\Z)$ of characters of $H$ coincides with the root lattice of $G$ with respect to $H$, generated by the root system $\Phi\subset\Mo(H)$ of $\fg$  (with respect to $H$). We will denote by $\Delta=\{\alpha_1,\dots,\alpha_n\}\subset \Phi$ a base of positive simple roots of $\Phi$, and by $\Phi^+\subset \Phi$ the set of roots that are non-negative integral combinations of elements of $\Delta$, so that $\fb=\fh\oplus\bigoplus_{\alpha\in \Phi^+}\fg_\alpha$ is a Borel subalgebra of $\fg$, corresponding to a Borel subgroup $B\subset G$ containing $H$. It is then known that any projective variety admitting a transitive action of $G$ (a so-called {\em rational homogeneous $G$-variety}) can be written as $G/P$, where $P$ is a subgroup of $G$ containing $B$. 

The Weyl group of $G$ is defined as the finite group $W:=\No_G(H)/H$; its natural action on $\Mo(H)\otimes_{\Z}\R$  preserves the inner product induced by the Killing form $\kappa(\cdot,\cdot)$ of $\fg$ (which gives $\Mo(H)\otimes_{\Z}\R$ the structure of Euclidean space). In this way $W$ can be described as the group generated by the reflections $r_i$ with respect to the positive simple roots $\alpha_i$. Given $w\in W$, the minimum number $k$ of reflections $r_i$ such that we can write $w=r_{i_1}\circ\dots\circ r_{i_k}$ is called the {\em length} of $w$; it is known that there exists a unique element of $W$ of maximal length, that is called the {\em longest element} of $W$, and that we will denote by $w_0$. We will denote by $\cD$ the Dynkin diagram of $G$, and by $D$ its set of nodes, which is in one to one correspondence with the base of positive simple roots $\Delta$. 

\begin{remark}\label{rem:longest}
The bijection of the root system $\Phi$ given by $w_0$, in the cases in which $\fg$ is simple, is described in \cite[Planche I--IX]{Bourb}. Essentially $w_0$ equals $-\id$ whenever $\cD$ has no non-trivial automorphisms ($\DB_n,\DC_n,\DE_7,\DE_8,\DF_4,\DG_2$) and in the case $\DD_n$, $n$ even; in the cases $\DA_n, \DE_6$ and $\DD_n$, $n$ odd, it is the composition of $-\id$ with the homomorphism of $\Mo(H)$ determined by the permutation of $\Delta$ induced by the non-trivial automorphism of $\cD$.
\end{remark}

Every rational homogeneous $G$-variety $G/P$ is determined by the marking of $\cD$ on a set of nodes $\{i_1,\dots,i_k\}\subset D$ (corresponding to positive simple roots $\alpha_{i_1},\dots,\alpha_{i_k}$), where $k$ equals the Picard number of $X$; we refer to \cite[Section~2]{MOSWW} for details. We will then set:
$$
\cD(I):=G/P, \mbox{ for }I=\{i_1,\dots,i_k\}.
$$
The parabolic subgroup $P$ associated to $I$ can be described as $P=BW(D\setminus I)B\supset B$, where $W(D\setminus I)\subset W$ is the subgroup of $W$ generated by the reflections $r_j$, $j\in D\setminus I$.

With this notation, any inclusion $I\subset I'\subset D$ gives rise to a natural projection 
$$
\cD(I')\to \cD(I), 
$$
whose fibers are rational homogeneous varieties of the form $\cD_I(I'\setminus I)$, where $\cD_I$ denotes the Dynkin subdiagram of $\cD$ obtained by deleting the nodes corresponding to the indices in $I$. 

The marking of the Dynkin diagram $\cD$, of a semisimple group $G$ as above, on the whole set of nodes $D$ corresponds to the quotient of $G$ by the Borel subgroup $B$ containing $H$, which is usually called the complete flag variety associated to $G$.
Complete flags can be characterized (cf. \cite{OSWi}) by the property of being smooth projective varieties having as many independent $\P^1$-bundle structures as their Picard number; in the notation we have just introduced, these structures are precisely the natural projections:
$$
\cD(D)\to \cD(D\setminus \{i\}), \qquad i=1,\dots n.
$$
Later on we will denote the numerical class of the fibers of these maps by $\Gamma_i$.

\subsubsection*{Examples of rational homogeneous varieties}\label{sssec:exRHvar}

For the connected Dynkin diagrams we will use the numbering proposed by Bourbaki (cite \cite[Planche I--IX]{Bourb}); for instance, the diagram $\DE_6$ is numbered as in Figure \ref{fig:E6}.
\begin{figure}[h!]
\begin{center}
\ifx\du\undefined
  \newlength{\du}
\fi
\setlength{\du}{3.3\unitlength}
\begin{tikzpicture}
\pgftransformxscale{1.000000}
\pgftransformyscale{1.000000}

%%%%%% COLORS
\definecolor{dialinecolor}{rgb}{0.000000, 0.000000, 0.000000} % EXTERIOR
\pgfsetstrokecolor{dialinecolor}
\definecolor{dialinecolor}{rgb}{0.000000, 0.000000, 0.000000} % INTERIOR
\pgfsetfillcolor{dialinecolor}

%%%%%% NODES

\pgfsetlinewidth{0.300000\du}
\pgfsetdash{}{0pt}
\pgfsetdash{}{0pt}
%\pgfsetmiterjoin

%%% #1
\pgfpathellipse{\pgfpoint{-6\du}{0\du}}{\pgfpoint{1\du}{0\du}}{\pgfpoint{0\du}{1\du}}
\pgfusepath{stroke}
\node at (-6\du,0\du){};
%\pgfpathellipse{\pgfpoint{-6\du}{0\du}}{\pgfpoint{1\du}{0\du}}{\pgfpoint{0\du}{1\du}}
%\pgfusepath{fill}
%\node at (-6\du,0\du){};

%%% #2
\pgfpathellipse{\pgfpoint{14\du}{-7\du}}{\pgfpoint{1\du}{0\du}}{\pgfpoint{0\du}{1\du}}
\pgfusepath{stroke}
\node at (14\du,-7\du){};
%\pgfpathellipse{\pgfpoint{14\du}{-7\du}}{\pgfpoint{1\du}{0\du}}{\pgfpoint{0\du}{1\du}}
%\pgfusepath{fill}
%\node at (14\du,-7\du){};

%%% #3
\pgfpathellipse{\pgfpoint{4\du}{0\du}}{\pgfpoint{1\du}{0\du}}{\pgfpoint{0\du}{1\du}}
\pgfusepath{stroke}
\node at (4\du,0\du){};
%\pgfpathellipse{\pgfpoint{4\du}{0\du}}{\pgfpoint{1\du}{0\du}}{\pgfpoint{0\du}{1\du}}
%\pgfusepath{fill}
%\node at (4\du,0\du){};

%%% #4
\pgfpathellipse{\pgfpoint{14\du}{0\du}}{\pgfpoint{1\du}{0\du}}{\pgfpoint{0\du}{1\du}}
\pgfusepath{stroke}
\node at (14\du,0\du){};
%\pgfpathellipse{\pgfpoint{14\du}{0\du}}{\pgfpoint{1\du}{0\du}}{\pgfpoint{0\du}{1\du}}
%\pgfusepath{fill}
%\node at (14\du,0\du){};

%%% #5
\pgfpathellipse{\pgfpoint{24\du}{0\du}}{\pgfpoint{1\du}{0\du}}{\pgfpoint{0\du}{1\du}}
\pgfusepath{stroke}
\node at (24\du,0\du){};
%\pgfpathellipse{\pgfpoint{24\du}{0\du}}{\pgfpoint{1\du}{0\du}}{\pgfpoint{0\du}{1\du}}
%\pgfusepath{fill}
%\node at (24\du,0\du){};

%%% #6
\pgfpathellipse{\pgfpoint{34\du}{0\du}}{\pgfpoint{1\du}{0\du}}{\pgfpoint{0\du}{1\du}}
\pgfusepath{stroke}
\node at (34\du,0\du){};
%\pgfpathellipse{\pgfpoint{34\du}{0\du}}{\pgfpoint{1\du}{0\du}}{\pgfpoint{0\du}{1\du}}
%\pgfusepath{fill}
%\node at (34\du,0\du){};

%%%%%% LINKS
\pgfsetlinewidth{0.300000\du}
\pgfsetdash{}{0pt}
\pgfsetdash{}{0pt}
\pgfsetbuttcap

{\draw (-5\du,0\du)--(3\du,0\du);}
{\draw (5\du,0\du)--(13\du,0\du);}
{\draw (15\du,0\du)--(23\du,0\du);}
{\draw (25\du,0\du)--(33\du,0\du);}
{\draw (14\du,-1\du)--(14\du,-6\du);}
%{\draw (34.65\du,0.7\du)--(43.35\du,0.7\du);}
%{\draw (34.65\du,-0.7\du)--(43.35\du,-0.7\du);}

%%%%%% TAGS
\node[anchor=west] at (38\du,0\du){${\rm E}_6$};

\node[anchor=south] at (-6\du,1.1\du){$\scriptstyle 1$};

\node[anchor=south] at (4\du,1.1\du){$\scriptstyle 3$};

\node[anchor=south] at (14\du,1.1\du){$\scriptstyle 4$};

\node[anchor=south] at (24\du,1.1\du){$\scriptstyle 5$};

\node[anchor=south] at (34\du,1.1\du){$\scriptstyle 6$};

\node[anchor=east] at (12.9\du,-7\du){$\scriptstyle 2$};

\end{tikzpicture} 
\caption{The Dynkin diagram $\DE_6$.\label{fig:E6}}
\end{center}
\end{figure}
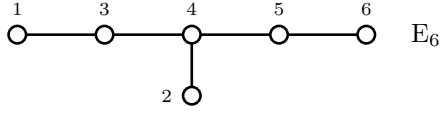
 
Algebraic geometers usually denote rational homogeneous varieties in a way that it reflects one of  their geometric descriptions.  The advantage of the above ``group-minded'' notation comes from the fact that it may be applied to all rational homogeneous varieties. 
For the reader's convenience, we include here a list of rational homogeneous varieties, their projective descriptions, and the notation we will use for them. 
\begin{itemize}[leftmargin=*]
\item The projective space $\P^n$ is written as $\DA_n(1)$, and its dual as $\DA_n(n)$; more generally, the Grassmannian of $(k-1)$-linear subspaces of $\P^n$ is $\DA_n(k)$, and so $\DA_n(k_1,\dots,k_r)$, $k_1<\dots< k_r$ is the variety of flags of linear spaces $\P^{k_1-1}\subset\dots \subset \P^{k_r-1}\subset \P^n$.
\item Smooth quadrics of dimension $2n-1$ (resp. $2n-2$) are denoted by $\DB_n(1)$ (resp. $\DD_n(1)$). Varieties of the form $\DB_n(k_1,\dots, k_r)$ parametrize flags of linear subspaces of $\DB_n(1)$. The case of flags in $\DD_n(1)$ is essentially analogous, but there exist two disjoint irreducible families of $(n-1)$-dimensional linear subspaces, denoted by $\DD_n(n-1)$, $\DD_n(n)$ (the so-called spinor varieties); the family of $(n-2)$-dimensional linear subspaces in $\DD_n(1)$ is $\DD_n(n-1,n)$.
\item Varieties of the form $\DC_n(k_1,\dots, k_r)$ parametrize flags of linear subspaces in $\P(V)=\P^{2n-1}$ that are isotropic with respect to a maximal rank skew-symmetric form in $V$. The variety $\DC_n(1)$ is nothing but $\P(V)$, and the varieties $\DC_n(k)$ are usually called isotropic Grassmannians. 
\item The rational homogeneous varieties of types $\DE_k$, $\DF_4$, $\DG_2$ can be described projectively in terms of the algebra of complexified octonions; we refer the interested reader to \cite{LM} and the references therein. Here it will be enough to remark that the variety $\DE_6(1)$ is the  octonionic projective plane, sometimes called the Cartan variety. It can also be described as the Severi variety of dimension $16$ in Zak's classification, \cite[IV. Theorem 4.7]{Z}. The rest of varieties of type $\DE_6$ can be seen as families of subvarieties (or flags of subvarieties) in $\DE_6(1)$. For instance $\DE_6(6)$ (which is isomorphic to $\DE_6(1)$) parametrizes smooth $8$-dimensional quadrics in $\DE_6(1)$; furthermore $\DE_6(3)$ parametrizes the lines in (the minimal projective embedding of) $\DE_6(1)$, so that we may assert, by our previous arguments, that the family of lines in $\DE_6(1)$ passing by a point is isomorphic to the spinor variety $\DD_5(5)$. In a similar way, the family of lines passing by a point in the variety $\DE_7(7)$ is isomorphic to $\DE_6(1)$. 
\end{itemize}
 
\subsubsection*{Line bundles on rational homogeneous varieties}\label{sssec:lbRHvar}

Let us finish this section by  describing the Picard group of the complete flag $\cD(D)$, that contains $\Pic(\cD(I))$ via the corresponding pullback map, for every $I\subset D$.

Let us denote by $G'$ the (unique) simply connected semisimple group with Lie algebra $\fg$. One has a surjective finite homomorphism of algebraic groups $\phi:G'\to G$ such that $B':=\phi^{-1}(B)$ is a Borel subgroup of $G'$ (so that $G'/B'\simeq G/B=\cD(D)$), and  $H':=\phi^{-1}(H)\subset G'$ is a maximal torus; moreover the restriction of $\phi$ to $H'$ induces an inclusion of $\Mo(H)$ into $\Mo(H')$ as a sublattice of finite index. Furthermore after embedding these two lattices in the Euclidean space $\Mo(H)\otimes_{\Z}\R=\Mo(H')\otimes_{\Z}\R$, one may identified $\Mo(H')$ with the lattice of abstract weights of $\fg$, defined as the set of vectors $v\in \Mo(H)\otimes_{\Z}\R$ satisfying:
$$
2\dfrac{\kappa(v,\alpha)}{\kappa(\alpha,\alpha)}\in\Z,\quad\mbox{for all }\alpha\in \Phi.
$$
For every element $\lambda\in \Mo(H')$ one may consider its composition with the projection $\xi:B'\to H'$ (defined by quotienting $B'$ by its unipotent radical) to obtain a homomorphism from $B'$ to $\C^*$ defining an action of $B'$ on $\C$ and, subsequently, a line bundle on $\cD(D)$:
$$
L:=G'\times^{B'}\C= (G'\times\C)/\sim,\quad (g',v)\sim (g'b',\xi(b')^{\lambda}v)\mbox{ for every $b'\in B'$.}
$$
Furthermore, one may prove (see for instance \cite[Theorem~6.4]{Snow}) that:
\begin{proposition}\label{lem:picflag}
With the notation as above, the correspondence $\lambda \mapsto L$ induces an isomorphism of lattices:
$$
 \Mo(H')\simeq\Pic(\cD(D)).
$$
\end{proposition}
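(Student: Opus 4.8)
The plan is to prove that $\lambda\mapsto L_\lambda:=G'\times^{B'}\C$ is a homomorphism of abelian groups which is both injective and surjective, detecting line bundles numerically by restricting them to the $\P^1$-fibers $\Gamma_i$ of the projections $\cD(D)\to\cD(D\setminus\{i\})$ introduced above. That the correspondence is a homomorphism is immediate: the tensor product of the one-dimensional $B'$-modules satisfies $\C_\lambda\otimes\C_\mu=\C_{\lambda+\mu}$, whence $L_\lambda\otimes L_\mu\simeq L_{\lambda+\mu}$ and $L_0\simeq\cO_{\cD(D)}$.

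For injectivity I would compute the degree of $L_\lambda$ along each curve $\Gamma_i$. Restricting the principal $B'$-bundle $G'\to\cD(D)$ over $\Gamma_i$ reduces the computation to the rank-one situation, where $\Gamma_i$ is the quotient of the minimal parabolic attached to $\alpha_i$ by $B'$, i.e. a $\P^1$ of the form $\SL_2/B_2$ for the copy of $\SL_2$ associated to the simple root $\alpha_i$; the Borel--Weil description of line bundles on $\P^1$ then gives
$$
L_\lambda\cdot\Gamma_i=\langle\lambda,\alpha_i^\vee\rangle=2\,\frac{\kappa(\lambda,\alpha_i)}{\kappa(\alpha_i,\alpha_i)}.
$$
Since $G'$ is simply connected, $\Mo(H')$ is the full weight lattice, whose $\Z$-basis is given by the fundamental weights $\varpi_1,\dots,\varpi_n$ characterized by $\langle\varpi_i,\alpha_j^\vee\rangle=\delta_{ij}$. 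Hence the composite
$$
\Mo(H')\xrightarrow{\ \lambda\mapsto L_\lambda\ }\Pic(\cD(D))\xrightarrow{\ M\mapsto(M\cdot\Gamma_j)_j\ }\Z^n
$$
sends $\varpi_i$ to the $i$-th standard basis vector and is therefore an isomorphism; in particular $\lambda\mapsto L_\lambda$ is injective.

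It remains to establish surjectivity, which I regard as the crux of the argument. For this I would invoke the Bruhat decomposition of the complete flag $\cD(D)$ into affine Schubert cells: it exhibits $\cD(D)$ with cohomology concentrated in even degrees, so that $\Pic(\cD(D))\simeq\HH^2(\cD(D),\Z)$ is free, of rank equal to the number of codimension-one Schubert varieties, namely $n=|\Delta|$. Granting this, the second arrow above is a surjection $\Z^n\to\Z^n$ between free groups of the same finite rank (surjective because the full composite is an isomorphism), hence itself an isomorphism; composing with the inverse of the full composite then shows that $\lambda\mapsto L_\lambda$ is an isomorphism. The main obstacle is thus precisely this surjectivity, equivalently the freeness of $\Pic(\cD(D))$ of rank exactly $n$; I would either quote it from the cell decomposition of flag varieties, or obtain it through the alternative descent argument, using that for the simply connected $G'$ one has $\Pic(G')=0$ and that the only units on $G'$ are the nonzero constants, so that $\Pic(\cD(D))\simeq\Pic_{B'}(G')\simeq\Hom(B',\C^*)=\Mo(H')$, the final equality holding because the unipotent radical of $B'$ admits no nontrivial characters to $\C^*$.
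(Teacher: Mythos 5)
Your proposal is correct, but it is worth pointing out that the paper itself contains no proof of this proposition: it is quoted from the literature (Snow, Theorem~6.4), so any complete argument already goes beyond what the paper supplies. Your main route --- computing $L_\lambda\cdot\Gamma_i=2\kappa(\lambda,\alpha_i)/\kappa(\alpha_i,\alpha_i)$ by restricting to the $\P^1$-fibres $P_i/B'$ of $\cD(D)\to\cD(D\setminus\{i\})$, and then pinning down $\Pic(\cD(D))$ as a free group of rank $n$ via the Bruhat cell decomposition --- is sound; the one step you pass over quickly is the identification $\Pic(\cD(D))\simeq\HH^2(\cD(D),\Z)$, which needs $\HH^1(\cD(D),\cO)=\HH^2(\cD(D),\cO)=0$ through the exponential sequence. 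This does follow from the cell decomposition (a smooth projective variety admitting an algebraic cell decomposition has no off-diagonal Hodge numbers), but it should be stated, since otherwise "cohomology in even degrees" only controls topological, not holomorphic, line bundles. The sign in the degree formula depends on the convention chosen in the definition of $L$; at worst it replaces $\delta_{ij}$ by $-\delta_{ij}$ and does not affect the conclusion. Your alternative descent argument --- $\Pic(G')=0$ for $G'$ simply connected, units on $G'$ constant, so $\Pic(\cD(D))\simeq\Hom(B',\C^*)=\Mo(H')$ because the unipotent radical of $B'$ has no nontrivial characters --- is cleaner, avoids intersection theory and topology altogether, gives injectivity and surjectivity in one stroke, and is essentially the proof found in the reference the paper cites; the intersection-theoretic route, on the other hand, has the virtue of simultaneously establishing the duality $L_i\cdot\Gamma_j=\delta_{ij}$ between fundamental weights and the classes $\Gamma_j$, which the paper uses immediately afterwards.
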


Finally we will denote by $\{\lambda_1,\dots,\lambda_n\}\in\Mo(H')$ the set of fundamental weights of $\fg$, defined by:
$$
2\dfrac{\kappa(\alpha_i,\lambda_j)}{\kappa(\alpha_i,\alpha_i)}=\delta_{ij}, \mbox{ for every }i,j,
$$
and by $L_1,\dots,L_n\in\Pic(\cD(D))$ the line bundles associated to $\lambda_1,\dots,\lambda_n$ (which constitute a $\Z$-basis of $\Pic(\cD(D))$).  Note that we have:
$$
L_i\cdot \Gamma_j=\delta_{ij}, \quad \text{for every}~i,j.
$$
The numerical classes of the $L_i's$ (that we denote also by $L_i$) generate the nef cone $\Nef(\cD(D))\subset \NU(\cD(D))$ of $\cD(D)$, and more generally, denoting by $\pi_I:\cD(D)\to\cD(I)$ the natural projection, we may write:
$$
\pi_I^*(\Nef(\cD(I)))=\R_{\geq 0}\{L_i,\,\,i\in I\}.
$$ 

By abuse of notation, for every $i\in D$ we will denote also by $L_i$ the line bundle on $\cD(i)$ whose pullback to $\cD(D)$ is $L_i\in \Pic(\cD(D))$. They are known to be very ample and to generate $\Pic(\cD(i))$. The action of $\C^*$ on $\cD(i)$ extends to an action on the projective space $\P(\HH^0(\cD(i),L_i)^\vee)$, in which $\cD(i)$ is embedded. Although it is not true in general that the family of lines in $\cD(i)$ is $G$-homogeneous (this happens whenever $i$ is not an exposed short node of $\cD$, see \cite[Theorem 4.3]{LM}), there is a covering family of lines parametrized by a $G$-homogeneous projective variety; more concretely, one consider the set $\nbh{i}$ of nodes linked to $i$ in the Dynkin diagram $\cD$, so that the natural maps:
$$
\xymatrix@C=2mm{&\cD(\nbh{i}\cup \{i\})\ar[rd]\ar[ld]&\\\cD(\nbh{i})&&\cD(i)}
$$
can be regarded as a family of rational curves in $\cD(i)$ and the corresponding evaluation. This is called the family of {\em $G$-isotropic lines} in $\cD(i)$, and it can be described as the image in $\cD(i)$ of the fibers of the contraction $\cD(D)\to\cD(D\setminus\{i\})$; abusing notation, we will denote their numerical class by $\Gamma_i$. 

\subsection{$\C^*$-actions on rational homogeneous varieties}\label{ssec:torRH}

\subsubsection*{$\C^*$-actions vs. co-characters}\label{ssec:cochar}

Up to composition with a character, an action of $\C^*$ on a rational homogeneous variety $X$ can always be extended to the action of a maximal torus $H$ of an adjoint group $G$ acting transitively on $X$, so that we may write $X=G/P$ for some $P$, and assume (after conjugation) that $H\subset B\subset P$. In other words, $\C^*$-actions on $G/P$ are parametrized by group homomorphisms:
$$
\sigma:\Mo(H)\to \Z,
$$
called co-characters of $H$.
Note that by restricting this map to the root system $\Phi$ of $G$ one get a $\Z$-grading of the Lie algebra $\fg$:
$$
\fg=\fh\oplus\bigoplus_{\alpha\in\Phi}\fg_\alpha=\underbrace{\fh\oplus\bigoplus_{\sigma(\alpha)=0}\fg_\alpha}_{\fg_0}\oplus\bigoplus_{m\in\Z_{\neq 0}}\underbrace{\bigoplus_{\sigma(\alpha)=m}\fg_\alpha}_{\fg_m}
$$ 
The subspace $\fg_0\subset \fg$ is a Lie subalgebra of $\fg$, that is reductive.

In this paper, we will focus mostly on the case of $\C^*$-actions given by a particular type of co-characters, defined as follows. 
Given an index $i\in D$ we consider the grading in $\fg$ given by the unique map $\sigma_i$ sending the positive simple root $\alpha_i$ to $1$ and  $\alpha_j$, $j\neq 0$, to $0$. In this case, the subalgebra $\fg_0$ is the Lie subalgebra of a (reductive) subgroup $G_0$ that is a Levi part of a parabolic subgroup $P_i\subset G$, whose Lie algebra is 
$$
\fp_i=\fg_0\oplus \bigoplus_{m> 0}\fg_m.
$$
Note also that the subgroup $\C^*\subset H$ determined by $\sigma_i$ can be described as the center of the subgroup $G_0\subset G$.

\subsubsection*{The transversal group of a $\C^*$-action}\label{ssec:transversal}

Let $\sigma_i$ be a co-character of $H$ as above, and let $G_0\subset G$ be the corresponding subgroup determined by the associated $\Z$-grading of $\fg$.  
 
\begin{definition}\label{def:transversal}
With the above notation, the semisimple group $$G^\perp:=[G_0,G_0]\subset G$$ is called the {\em transversal subgroup} of the 
co-character $\sigma_i$.
\end{definition}

\begin{remark}\label{rem:commute}
Note that by construction, for every co-character $\sigma_i$ as above, $[\C^*,G^\perp]=\{1\}$. In particular, the action of $\C^*$ associated to $\sigma_i$ on any homogeneous manifold $G/P$ commutes with the action of $G^\perp$. In particular, the action of $G^\perp$ on $G/P$ sends $\C^*$-orbits to $\C^*$-orbits, and leaves the set of $\C^*$-fixed points $(G/P)^{\C^*}$ invariant; since moreover $G^\perp$ is connected, then every component of $\C^*$-fixed points is invariant by $G^\perp$. More precisely:
\end{remark}

\begin{proposition}\label{prop:fixcomp}
Let $X=G/P$ be a rational homogeneous variety, endowed with the $\C^*$-action associated to a co-character $\sigma_i$ of $G$ as above. Then the fixed point components of $X$ by the action of $\C^*$ are rational homogeneous $G^\perp$-varieties.
\end{proposition}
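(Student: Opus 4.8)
The plan is to show that each fixed point component is a single orbit of $G^\perp$ (equivalently of $G_0$) and that the corresponding isotropy is parabolic, so that the component is a rational homogeneous $G^\perp$-variety. First I would reduce the transitivity statement from $G^\perp$ to $G_0$: since $\sigma_i$ has a single nonzero value, $Z(G_0)^\circ$ is the one-dimensional torus $\C^*=\sigma_i(\C^*)$, so $G_0=\C^*\cdot G^\perp$ is an almost direct product with $\C^*$ central. Consequently, on the set of $\C^*$-fixed points the orbits of $G_0$ and of $G^\perp$ coincide, and by Remark \ref{rem:commute} every component $Y\in\cY$ is $G^\perp$- and hence $G_0$-invariant. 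It therefore suffices to prove that $G_0$ acts transitively on each such $Y$.

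Next I would produce a convenient base point. As $H\subset G_0$ preserves the projective variety $Y$, the Borel fixed point theorem yields an $H$-fixed point in $Y$, necessarily of the form $wP$ for some $w\in W$. Since $\C^*\subset Z(G_0)$, for every $g\in G_0$ one has $\sigma_i(t)\,g\,wP=g\,\sigma_i(t)\,wP=g\,wP$, so the orbit $G_0\cdot wP$ is pointwise $\C^*$-fixed and hence contained in $Y$. Writing $wPw^{-1}$ for the isotropy of $wP$ in $G$, this orbit is $G_0/(G_0\cap wPw^{-1})$. I would then check that $G_0\cap wPw^{-1}$ is parabolic in $G_0$: its roots are $\Phi_0\cap w(\Phi_P)$, where $\Phi_0=\{\beta\in\Phi:\sigma_i(\beta)=0\}$ is the (symmetric, closed) root system of $G_0$ and $w(\Phi_P)$ is a parabolic subset of $\Phi$; intersecting $\Phi_0$ with the relations $w(\Phi_P)\cup(-w(\Phi_P))=\Phi$ and the closedness of $w(\Phi_P)$ shows that $\Phi_0\cap w(\Phi_P)$ is a parabolic subset of $\Phi_0$. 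Hence $G_0\cdot wP$ is complete, thus closed and irreducible in $X$.

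Finally I would compare dimensions at $wP$ to force the orbit to fill $Y$. By the Bia\l ynicki--Birula description recalled in \ref{sssec:BB}, the tangent space to a fixed component is its weight-zero part, $T_{wP}Y=(T_{wP}X)^{\C^*}$. On the other hand $T_{wP}X=\fg/(w\fp w^{-1})$ is $\C^*$-graded, and its weight-zero summand is exactly the image of $\fg_0$, since a root space $\fg_\beta$ has $\C^*$-weight $\sigma_i(\beta)$ and $\sigma_i(\beta)=0$ means precisely $\fg_\beta\subset\fg_0$. Thus $(T_{wP}X)^{\C^*}=\fg_0/(\fg_0\cap w\fp w^{-1})=T_{wP}(G_0\cdot wP)$, and since $Y$ is smooth we obtain $\dim(G_0\cdot wP)=\dim Y$. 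A closed irreducible subvariety of the same dimension as the irreducible $Y$ must equal $Y$, so $Y=G_0\cdot wP=G^\perp\cdot wP\cong G^\perp/(G^\perp\cap wPw^{-1})$ is a rational homogeneous $G^\perp$-variety.

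The hard part is identifying the full component with the orbit: the two genuinely new inputs are (a) the computation $(T_{wP}X)^{\C^*}=\im(\fg_0\to T_{wP}X)$, which pins down the dimension of the orbit, and (b) the parabolicity of $G_0\cap wPw^{-1}$, which guarantees the orbit is closed. Both are combinatorial consequences of the $\Z$-grading induced by $\sigma_i$. Everything else --- invariance of components, existence of an $H$-fixed base point, and the reduction from $G^\perp$ to $G_0$ --- is formal.
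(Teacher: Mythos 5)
Your argument is correct, but it takes a genuinely different route from the paper's. The paper's proof (modelled on \cite[Theorem~2.6]{Tev2}) embeds $X=G/P$ into $\P(V)$ by a very ample homogeneous line bundle, uses the induced grading $V=\bigoplus_m V_m$ to write the fixed locus as $\bigcup_m X\cap\P(V_m)$, and then shows at \emph{every} point $x$ of every component that $T_{X\cap\P(V_m),x}=T_{G_0x,x}$, via the graded-module relation $\fg_kV_m\subset V_{k+m}$ (which gives $\fg v\cap V_m=\fg_0 v$); this makes each $G^\perp$-orbit open in its component, and connectedness finishes. You instead work at a single $H$-fixed base point $wP$ (obtained from the Borel fixed point theorem, where the paper later cites \cite[Lemma 2.1(ii)]{WORS2} for the same purpose), prove that the orbit through it is \emph{closed} by checking that $\Phi_0\cap w(\Phi_P)$ is a parabolic subset of $\Phi_0$, and then match dimensions using the Bia{\l}ynicki-Birula identification $T_{wP}Y=(T_{wP}X)^{\C^*}$ together with the weight computation on $\fg/\Ad_w(\fp)$. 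Both proofs share the reduction $G_0=\C^*\cdot G^\perp$ and the zero-weight tangent space computation; the mechanisms differ (openness of all orbits versus closedness plus a dimension count at one point). Your route is purely Lie-theoretic --- no projective embedding or graded module structure is needed --- and it yields for free the explicit presentation $Y\cong G^\perp/(G^\perp\cap wPw^{-1})$, which the paper only obtains afterwards in Proposition \ref{prop:fixedareflags} and Corollary \ref{cor:fixedareflags}. The paper's route, in exchange, exhibits the fixed components as the linear sections $X\cap\P(V_m)$, which is precisely the description exploited later (in the discussion preceding Proposition \ref{prop:equalisol}) to enumerate the fixed point components of the equalized actions.
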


\begin{proof}
The proof follows the line of argumentation of \cite[Theorem~2.6]{Tev2}. We start by considering a very ample $G$-homogeneous line bundle $L$ on $G/P$, so that we have an embedding $G/P\subset \P(V)$ where $V=\HH^0(X,L)$, and the action of $G$ on $X$ is the restriction of the action of $G$ on $\P(V)$. In particular we get an action of $\C^*\subset H\subset G$ on $V$, that provides a $\Z$-grading:
$$
V=\bigoplus_{m\geq 0}V_m,\quad V_m:=\{v\in V|\,\,t(v)=t^mv\}.
$$
In particular $X^{\C^*}=\bigcup_{m\geq 0}\P(V_m)\cap X$.
Note that we may consider $V$ as a $\fg$-module, and then $V$ is a $\Z$-graded module over the $\Z$-graded Lie algebra $\fg$, so that we have 
\begin{equation}\label{eq:graded}
\fg_kV_m\subset V_{k+m},\quad\mbox{for every }m,k\in \Z.
\end{equation}

By Remark \ref{rem:commute}, every component of $\P(V_m)\cap X$ is $G^\perp$-invariant, for every $m$. 
The proof may then be completed by showing that: 
$$
T_{X\cap \P(V_m),x}=T_{G^{\perp}x,x}, \mbox{ for every }x\in \P(V_m)\cap X.
$$
Since $G_0=\C^*G^\perp=G^\perp\C^*$, and $x$ is fixed by $\C^*$, it is enough to show that $T_{X\cap \P(V_m),x}$ is equal to $T_{G_0x,x}$.
The point $x$ is the class modulo homotheties of a vector $v\in V_m^\vee$, so we have:
$$
T_{G_0x,x}=\dfrac{\fg_0 v}{\langle v\rangle}, \quad T_{X\cap \P(V_m),x}
=\dfrac{\fg v \cap V_m}{\langle v\rangle}.
$$
We conclude by noting that $\fg_0 v=\fg v \cap V_m$ by Equation (\ref{eq:graded}).
\end{proof}

\subsection{The action of a maximal torus on a complete flag manifold}\label{ssec:maxflag}

An important ingredient that we will use later on is the action of the Cartan subgroup $H\subset B\subset G$ on the complete flag $G/B$, whose properties are well known (cf. \cite[Section~3.4]{CARRELL}), since they are essentially related to the Bruhat decomposition of $G$. Then via the natural $G$-equivariant projections $G/B\to G/P$, one may study the action of $H$ (and of any subtorus in it) on any $G/P$. 

\subsubsection*{Fixed points of the Cartan action on $G/B$}\label{sssec:fixedpoints}

The following statement describes the fixed points of the $H$-action on $G/B$, and its infinitesimal behavior around those points; we include a proof for the reader's convenience.

\begin{proposition}\label{prop:Bruhat}
Let $G$ be a semisimple algebraic group. The set of fixed points of the action of a maximal torus $H\subset B\subset G$ on the complete flag $G/B$ is:
$$
(G/B)^H=\left\{wB|\,\,w\in W\right\}.
$$ 
For every $w\in W$, the set of weights of the $H$-action on the tangent space $T_{G/B,wB}$ is $\{w(\alpha)|\,\, -\alpha\in\Phi^+\}$. 
\end{proposition}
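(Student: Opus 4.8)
The plan is to treat the two assertions separately. For the fixed locus, I would begin from the remark that a coset $gB$ is fixed by $H$ precisely when $g^{-1}Hg\subset B$, i.e.\ when $g^{-1}Hg$ is a maximal torus of the connected solvable group $B$. Since all maximal tori of a connected solvable group are conjugate under that group, there exists $b\in B$ with $b^{-1}(g^{-1}Hg)b=H$, so that $gb$ normalizes $H$ and hence represents some $w\in W=\No_G(H)/H$; as $gB=gbB=\dot wB$, every $H$-fixed point has the asserted form. Conversely each $\dot wB$ is $H$-fixed, because $h\dot wB=\dot w(\dot w^{-1}h\dot w)B=\dot wB$, the element $\dot w^{-1}h\dot w$ lying in $H\subset B$.

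To upgrade this to the stated bijection I would invoke the standard fact that $\No_G(H)\cap B=H$ (the normalizer of a maximal torus inside a Borel subgroup containing it is the torus itself). Then $\dot wB=\dot w'B$ forces $\dot w'^{-1}\dot w\in \No_G(H)\cap B=H$, i.e.\ $w=w'$ in $W$, and also shows the fixed point $\dot wB$ depends only on the class $w$ and not on the chosen representative $\dot w$. This yields the identification $(G/B)^H=\{wB\mid w\in W\}$.

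For the tangent weights I would first settle the base point $eB$, where $T_{G/B,eB}\cong\fg/\fb$ as $H$-modules for the adjoint action; the root-space decomposition gives $\fg/\fb\cong\bigoplus_{\alpha\in\Phi^+}\fg_{-\alpha}$, whose $H$-weights are the negative roots $\{\alpha\mid -\alpha\in\Phi^+\}$, matching the formula for $w=\id$. For general $w$ I would transport this via the left-translation isomorphism $\ell_{\dot w}\colon G/B\to G/B$, $gB\mapsto \dot wgB$, which carries $eB$ to $wB$. This map is not $H$-equivariant, but satisfies $h\cdot\ell_{\dot w}(p)=\ell_{\dot w}\bigl((\dot w^{-1}h\dot w)\cdot p\bigr)$; consequently its differential sends an $H$-weight vector of weight $\chi$ at $eB$ to a weight vector of weight $w(\chi)$ at $wB$, using the definition of the $W$-action on $\Mo(H)$ by $(w\chi)(h)=\chi(\dot w^{-1}h\dot w)$. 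Applying this to the negative roots produces exactly $\{w(\alpha)\mid -\alpha\in\Phi^+\}$.

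The main point requiring care is the bookkeeping of conventions: one must check that the conjugation twist appearing in the equivariance of $\ell_{\dot w}$ agrees with the action $(w\chi)(h)=\chi(\dot w^{-1}h\dot w)$ under which $W$ is realized as the reflection group on $\Mo(H)\otimes_\Z\R$, and that both the fixed point $wB$ and the resulting weight set are independent of the representative $\dot w$ (the latter being immediate, since a different representative differs by an element of $H$, which fixes $wB$ and acts trivially by conjugation on $H$). Once these conventions are pinned down, the remaining verifications are routine linear algebra on the tangent space.
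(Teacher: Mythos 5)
Your proposal is correct and follows essentially the same route as the paper: the fixed-point part is the identical argument (conjugacy of maximal tori in the connected solvable group $B$, yielding $gb\in \No_G(H)$), and your transport of weights from $eB$ via the twisted equivariance of left translation is exactly what underlies the paper's more compressed statement that $T_{G/B,wB}\cong \fg/\Ad_w(\fb)$ equivariantly, with the decomposition $\bigoplus_{-\alpha\in\Phi^+}\fg_{w(\alpha)}$. The extra verifications you include (injectivity via $\No_G(H)\cap B=H$, independence of the representative $\dot w$) are correct refinements that the paper only notes in passing.
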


\begin{proof}
Note first that given an element $w=nH\in W$, the class $nB$ does not depend on the choice of $n$, so it makes sense to denote $nB$ by  $wB$, as we have done in the statement.
 
A point $gB\in G/B$ is $H$-fixed if and only if 
$g^{-1}Hg\subset B$. Since any two maximal tori in a connected solvable group are conjugated (cf. \cite[19.3]{Hum1}), there exists $b\in B$ such that $g^{-1}Hg=bHb^{-1}$, and we conclude that $gb\in \No_G(H)$. By setting $w=gbH\in W$, we may finally write $gB=gbB=wB$. 

For the second part it is enough to note that $T_{G/B,wB}$ is ($\C^*$-equivariantly) isomorphic to $\fg/\Ad_w(\fb)$, which decomposes in $\C^*$-eigenspaces as:
\[\fg/\Ad_w(\fb)=\sum_{-\alpha\in\Phi^+}\fg_{w(\alpha)}.
\]
\end{proof}

\subsubsection*{Weights of the Cartan action}\label{sssec:weightsCartan}

Now we will compute the weights of the $H$-action on $G/B$ on its set of fixed points, with respect to a given line bundle $L\in\Pic(G/B)$, which we identify with a character $\lambda\in \Mo(H')$ of the maximal torus of the universal covering $G'$ of $G$, see Proposition \ref{lem:picflag}. While the existence of a linearization of the action of $H$ on $L$ is garanted in a more general setting by \cite[Proposition 2.4]{KKLV}, the special behavior of line bundles on rational homogeneous varieties (see Section \ref{ssec:notation}) allows us to describe easily the associated weights.

In fact, given a line bundle $L$ on $G/B$ associated to a weight $\lambda\in\Mo(H')$, its description as a homogeneous bundle allows us to define a linearization of the action of $H'$ on $L$ for any character $m\in \Mo(H')$, as follows:
$$
h'[(g',v)]=[(h'g',h'^{m}v)], \mbox{ for any }h'\in H',[(g',v)]\in L.
$$ 
The important point to note here is that for the particular choice $m=\lambda$ the action of $H'$ on $L$ descends to an action of $H$. In fact, taking into account that the kernel of the map $H'\to H$ is the center of $G'$, we may write, for every $z\in\ker(H'\to H)$:
$$
z[(g',v)]=[(zg',z^{\lambda}v)]=[(g'z,z^{\lambda}v)]=[(g',v)], 
$$
for every $[(g',v)]\in L$. We may now compute the weights of the $H$-action with respect to $L$.

\begin{proposition}\label{prop:weights}
Let $L\in\Pic(G/B)$ be a line bundle associated to a weight $\lambda$. There exists a linearization of the $H$-action on $G/B$ whose weights are:
$$
\mu^H_L(wB)=\lambda-w(\lambda)\in \Mo(H), \mbox{ for any }w\in W.
$$
\end{proposition}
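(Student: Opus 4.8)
The plan is to read off, for each $w\in W$, the character by which $H$ acts on the fiber $L_{wB}$, using the homogeneous-bundle model $L=G'\times^{B'}\C$ together with the linearization with $m=\lambda$ constructed just above the statement (the one that descends from $H'$ to $H$). First I would fix a representative $n'\in\No_{G'}(H')$ of $w$ (recall that $W=\No_{G'}(H')/H'$), so that the fixed point $wB\in G/B=G'/B'$ is the class $n'B'$ and the fiber $L_{wB}$ consists of the classes $[(n',v)]$, $v\in\C$. The weight $\mu^H_L(wB)$ is, by definition, the character governing the action of $h'\in H'$ on such a class, which factors through $H$.

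Next I would compute $h'[(n',v)]=[(h'n',\,h'^{\lambda}v)]$ and move $h'$ across the representative via $h'n'=n'\,(n'^{-1}h'n')$. Since $n'$ normalizes $H'$, the element $k':=n'^{-1}h'n'$ lies in $H'\subset B'$, and it equals $w^{-1}$ acting on $h'$ by conjugation. Because $\xi$ restricts to the identity on $H'$, the defining equivalence relation $(g',u)\sim(g'b',\xi(b')^{\lambda}u)$ lets me absorb $k'$ and obtain
\[
h'[(n',v)]=[(n'k',\,h'^{\lambda}v)]=[(n',\,(k')^{-\lambda}\,h'^{\lambda}v)].
\]
Evaluating $\lambda$ on $k'=w^{-1}(h')$ through the dual action of $W$ on $\Mo(H')$ gives $(k')^{\lambda}=h'^{w(\lambda)}$, whence the fiber weight is $h'^{\lambda-w(\lambda)}$; this is precisely the asserted value $\mu^H_L(wB)=\lambda-w(\lambda)$.

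Finally I would verify that $\lambda-w(\lambda)$ is a genuine character of $H$ rather than only of $H'$: writing $w$ as a product of simple reflections and using $r_i(\lambda)=\lambda-\langle\lambda,\alpha_i^\vee\rangle\alpha_i$, the difference $\lambda-w(\lambda)$ is an integral combination of roots and so lies in the root lattice $\Mo(H)$, consistently with the fact, noted before the statement, that the linearization with $m=\lambda$ descends to $H$. The only real care needed is the bookkeeping of conventions: whether $n'$ represents $w$ or $w^{-1}$, and the matching sign in the dual $W$-action on $\Mo(H')$, which is exactly what makes the answer come out as $\lambda-w(\lambda)$ (and not $\lambda-w^{-1}(\lambda)$); everything else is formal manipulation of the equivalence classes defining $L$.
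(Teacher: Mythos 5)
Your proof is correct and follows essentially the same route as the paper's: both apply the linearization with $m=\lambda$ to a representative of $w$, rewrite $h'n'=n'(n'^{-1}h'n')$, absorb the element $n'^{-1}h'n'\in H'\subset B'$ through the defining equivalence relation of $L=G'\times^{B'}\C$ (using that $\xi$ is the identity on $H'$), and identify $(n'^{-1}h'n')^{\lambda}=h'^{w(\lambda)}$ via the dual Weyl-group action, yielding the weight $\lambda-w(\lambda)$. Your extra bookkeeping (independence of the representative $n'$, and the check that $\lambda-w(\lambda)$ lies in the root lattice $\Mo(H)$) only makes explicit what the paper leaves implicit.
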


\begin{proof}
For every $h\in H$, $w\in W$, $v\in \C$, we have
\begin{multline*}
h[(w,v)]=[(hw,h^{\lambda}v)]=[(ww^{-1}hw,h^{\lambda}v)]=\\=[(w,(w^{-1}hw)^{-\lambda} h^{\lambda}v)]=
[(w,h^{-w(\lambda)+\lambda}v)].
\end{multline*}
\end{proof}

If $\C^*\subset H$ is a subtorus associated to a co-character $\sigma_i:\Mo(H)\to\Z$, then one knows (cf. \cite[Lemma 2.1 (ii)]{WORS2}) that every $\C^*$-fixed component contains at least an $H$-fixed point. In particular, the following Corollary describes the weights of the $\C^*$-action with respect to any line bundle $L$. 

\begin{corollary}\label{cor:weights}
Let $L\in\Pic(G/B)$ be a line bundle associated to a weight $\lambda$, and  $\C^*\subset H$ be a subtorus associated to a co-character $\sigma_i:\Mo(H)\to\Z$. There exists a linearization of the action whose weights on $H$-fixed points are:
$$
\mu_L(wB)=\sigma_i(\lambda-w(\lambda))\in \Mo(H), \mbox{ for any }w\in W.
$$
\end{corollary}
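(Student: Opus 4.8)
The plan is to deduce the formula directly from Proposition \ref{prop:weights} by restricting the structure group from $H$ to the subtorus $\C^*$ cut out by the co-character $\sigma_i$. First I would recall that $\sigma_i$ is, by definition, a one-parameter subgroup $\sigma_i\colon\C^*\to H$, and that the associated homomorphism $\Mo(H)\to\Mo(\C^*)=\Z$—which the paper also denotes $\sigma_i$—is obtained by composing a character $\chi\colon H\to\C^*$ with $\sigma_i\colon\C^*\to H$. The subtorus $\C^*\subset H$ in the statement is precisely the image of this one-parameter subgroup, so the $\C^*$-action on $G/B$ is nothing but the restriction along $\sigma_i$ of the $H$-action considered in the previous proposition.

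Next I would observe that the linearization of the $H$-action on $L$ produced in the proof of Proposition \ref{prop:weights} restricts, along $\sigma_i$, to a linearization of the $\C^*$-action on $L$: the fiberwise linearity and the equivariance of the projection $L\to G/B$ are inherited from the $H$-equivariant ones, since $\C^*\subset H$. Over each $H$-fixed point $wB$ the torus $H$ acts on the fiber $L_{wB}$ through the character $\mu^H_L(wB)=\lambda-w(\lambda)\in\Mo(H)$, by Proposition \ref{prop:weights}. Composing this character with $\sigma_i\colon\C^*\to H$ yields the integer $\sigma_i(\lambda-w(\lambda))\in\Z$, which is by definition the weight $\mu_L(wB)$ of the restricted $\C^*$-action on $L_{wB}$, giving the claimed formula.

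Since $\C^*\subset H$, every $H$-fixed point is automatically $\C^*$-fixed, so the points $wB$ indeed lie in the $\C^*$-fixed locus and the computation above delivers their weights; by the cited \cite[Lemma~2.1~(ii)]{WORS2} every $\C^*$-fixed component in fact contains such a point, so the formula lets one read off the weight of any whole fixed component, although that refinement is not needed for the statement as phrased. I do not anticipate a genuine obstacle here: the corollary is a routine restriction of Proposition \ref{prop:weights}, and the only point requiring care is keeping the variance straight—$\lambda-w(\lambda)$ is a character of $H$ while $\sigma_i$ is a co-character, so their natural pairing is exactly the integer $\sigma_i(\lambda-w(\lambda))$—together with the observation that the linearization of Proposition \ref{prop:weights}, which had already been arranged to descend from $H'$ to $H$, restricts cleanly to $\C^*$ because $\C^*\subset H$.
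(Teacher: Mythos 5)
Your proof is correct and follows exactly the route the paper intends: the corollary is stated as an immediate consequence of Proposition \ref{prop:weights}, obtained by restricting the $H$-linearization to the subtorus $\C^*\subset H$ and pairing the character $\lambda-w(\lambda)$ with the co-character $\sigma_i$, which is precisely what you spell out (including the correct appeal to \cite[Lemma~2.1~(ii)]{WORS2} for why the $H$-fixed points suffice to determine the weights on all fixed components). The only minor remark is that the weight $\sigma_i(\lambda-w(\lambda))$ lies in $\Z$, as you correctly say, rather than in $\Mo(H)$ as the paper's statement misprints.
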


\subsubsection*{Fixed point components of $\C^*$-actions on rational homogeneous varieties}\label{sssec:fixedcomponents}

We already know that the fixed point components of the $\C^*$-action determined by a co-character $\sigma_i$ on a variety $G/P$ are $G^\perp$-homogeneous (see Proposition \ref{prop:fixcomp}). We will now describe more precisely these components. We start with the case of the complete flag manifold $G/B$. 

\begin{proposition}\label{prop:fixedareflags}
Let $G$ be a semisimple algebraic group, $B$ a Borel subgroup and $X=G/B$ the corresponding complete flag manifold. Consider the $\C^*$-action on $X$ induced by a co-character of the form $\sigma_i$ as above. Then the irreducible fixed point components of the action are flag manifolds with respect to the semisimple group $G^\perp$ transversal to the $\C^*$-action. 
\end{proposition}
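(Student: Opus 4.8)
The plan is to deduce the statement from the $G^\perp$-homogeneity of the fixed components established in Proposition \ref{prop:fixcomp}, together with an explicit description of the isotropy group. Every irreducible component $Y$ of $X^{\C^*}$ is closed in the projective variety $G/B$, hence projective, and by Proposition \ref{prop:fixcomp} it admits a transitive action of the connected semisimple group $G^\perp$. Since a projective variety carrying a transitive action of a connected semisimple group is necessarily a flag manifold (a standard consequence of Borel's fixed point theorem), this already settles the claim in the abstract; the rest of the argument is devoted to making the flag structure explicit and, in fact, to showing that each $Y$ is a \emph{complete} flag manifold of $G^\perp$.

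First I would locate an $H$-fixed point inside $Y$. By \cite[Lemma~2.1(ii)]{WORS2} every $\C^*$-fixed component contains an $H$-fixed point, which by Proposition \ref{prop:Bruhat} has the form $wB$ for some $w\in W$. Fixing such a point, homogeneity yields $Y=G^\perp\cdot wB\cong G^\perp/Q$ with $Q=G^\perp\cap wBw^{-1}$, so the problem reduces to identifying $Q$. The main computation is that of its Lie algebra: writing $\Phi_0:=\{\alpha\in\Phi\mid\sigma_i(\alpha)=0\}$ for the root subsystem cutting out $\fg_0$ --- whose base is $\Delta\setminus\{\alpha_i\}$ and which is the root system of $G^\perp$ relative to the maximal torus $H\cap G^\perp$ --- one reads off, exactly as in the proof of Proposition \ref{prop:Bruhat}, that
$$
\Lie(Q)=\fg^\perp\cap\Ad_w(\fb)=(\fh\cap\fg^\perp)\oplus\bigoplus_{\alpha\in\Phi_0\cap w(\Phi^+)}\fg_\alpha,
$$
where $\fg^\perp:=\Lie(G^\perp)$.

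The key step is then to recognize the set of roots $\Phi_0\cap w(\Phi^+)$. Since $w(\Phi^+)$ is a positive system of the full root system $\Phi$, say $w(\Phi^+)=\{\alpha\mid\kappa(\alpha,wv)>0\}$ for a regular vector $v$, its trace on the subsystem $\Phi_0$ is $\{\alpha\in\Phi_0\mid\kappa(\alpha,wv)>0\}$; as $wv$ is still regular for $\Phi\supset\Phi_0$, this is a positive system of $\Phi_0$. Hence $\Lie(Q)$ is a Borel subalgebra of $\fg^\perp$, $Q$ is the corresponding Borel subgroup (a closed subgroup containing a Borel is parabolic, hence connected), and $Y\cong G^\perp/Q$ is the complete flag manifold of $G^\perp$. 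I expect the only real obstacle to be bookkeeping rather than conceptual: one must check carefully that $\fh\cap\fg^\perp$ is genuinely a Cartan subalgebra of $\fg^\perp$ --- which uses that $H\subset G_0$ is a maximal torus of the Levi factor $G_0$, of the same rank as $G$ --- and that the root spaces appearing in $\Ad_w(\fb)$ are precisely those indexed by $w(\Phi^+)$. Once the torus is pinned down, the passage from a positive system of $\Phi$ to a positive system of $\Phi_0$ is the single substantive point, and it is handled by the regular-vector description above; the abstract route of the first paragraph can be used to bypass it entirely if desired.
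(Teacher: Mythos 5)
Your proof is correct, and its core is the same route as the paper's: locate an $H$-fixed point $wB$ in each component via \cite[Lemma~2.1(ii)]{WORS2}, invoke Proposition \ref{prop:fixcomp} to get $G^\perp$-homogeneity, and identify the component as $G^\perp/(G^\perp\cap \conj_w(B))$. The two places where you go beyond the paper are both worth noting. First, the paper simply \emph{asserts} that $G^\perp\cap\conj_w(B)$ is a Borel subgroup of $G^\perp$; you actually prove it, and your argument is sound: $\fg^\perp\cap\Ad_w(\fb)=(\fh\cap\fg^\perp)\oplus\bigoplus_{\alpha\in\Phi_0\cap w(\Phi^+)}\fg_\alpha$, and since $w(\Phi^+)$ is cut out by a regular vector whose restriction to the subsystem $\Phi_0$ (with base $\Delta\setminus\{\alpha_i\}$) is again regular, the set $\Phi_0\cap w(\Phi^+)$ is a positive system of $\Phi_0$; combined with the standard fact that a closed subgroup containing a Borel is parabolic and connected, this pins down $G^\perp\cap\conj_w(B)$ as a Borel subgroup. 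Second, your opening ``abstract route'' (closed in projective, hence complete, plus transitive action of the connected group $G^\perp$ implies the isotropy is parabolic, by Borel's fixed point theorem) is a genuinely shorter argument that already yields the statement as literally phrased; its only limitation is that it does not identify the components as \emph{complete} flag manifolds $G^\perp/B^\perp$, which is what the paper's proof (and your detailed computation) establishes and what is implicitly used later, e.g.\ when the fibers $F_\pm$ in Section \ref{sec:derived} are identified with $G^\perp/B^\perp$.
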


\begin{proof}
By \cite[Lemma 2.1(ii)]{WORS2}, every irreducible component of $X^{\C^*}$ contains a point of $X^H$, hence, by Proposition \ref{prop:fixcomp}, every irreducible fixed point component of $X$ by $\C^*$  is of the form $G^{\perp}wB$ for some $w\in W$. Since the isotropy subgroup of $wB$ by the action of $G$ is $\conj_{w}(B)$, it follows that 
$$G^{\perp}wB=\dfrac{G^{\perp}}{G^{\perp}\cap \conj_{w}(B)}.$$
Since $G^{\perp}\cap \conj_{w}(B)$ is a Borel subgroup of $G^{\perp}$, the statement follows.
\end{proof}

As a direct consequence (by projecting the $\C^*$-fixed point components in $G/B$ via the natural projection to $G/P$), we obtain a description of the fixed point components of the $\C^*$-actions introduced above on rational homogeneous spaces of the form $G/P$. 

\begin{corollary}\label{cor:fixedareflags}
Let $G$ be a semisimple algebraic group, $B$ a Borel subgroup, $P\supset B$ a parabolic subgroup. 
Let $X=G/P$ be the corresponding rational homogeneous space, 
and consider the $\C^*$-action on $X$ induced by a co-character of the form $\sigma_i$ as above. Then for every $w\in W$ the irreducible fixed point component passing by the point $wP$ is the rational homogeneous variety:
$$
\dfrac{G^{\perp}}{G^{\perp}\cap \conj_{w}(P)}.
$$
\end{corollary}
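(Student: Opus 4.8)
The plan is to deduce the statement from Proposition \ref{prop:fixedareflags} via the natural $G$-equivariant projection $\pi\colon G/B\to G/P$, $gB\mapsto gP$. Since the $\C^*$-action on either variety is the restriction of the $H$-action attached to $\sigma_i$ (recall $\C^*\subset H\subset B\subset P$), the map $\pi$ is $\C^*$-equivariant. The point $wP$ is $H$-fixed --hence $\C^*$-fixed-- because $w^{-1}hw\in H\subset P$ for every $h\in H$ and any representative of $w$ in $\No_G(H)$; moreover $wB$ is an obvious $\C^*$-fixed lift of $wP$ along $\pi$.

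First I would identify the fixed component of $G/P$ through $wP$ as a $G^\perp$-orbit. By Proposition \ref{prop:fixcomp} every irreducible fixed point component of $G/P$ is $G^\perp$-homogeneous, i.e. a single $G^\perp$-orbit; since $wP$ lies in such a component, that component is forced to be $G^\perp\cdot wP$. On the other hand, Proposition \ref{prop:fixedareflags} tells us that the fixed component of $G/B$ through $wB$ is $G^\perp\cdot wB=G^\perp/(G^\perp\cap\conj_w(B))$, and applying the $\C^*$-equivariant map $\pi$ sends this orbit onto $G^\perp\cdot wP$. This shows that the component we want is exactly the $\pi$-image of the corresponding flag component in $G/B$.

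It then remains to describe $G^\perp\cdot wP$ as a homogeneous space. The isotropy subgroup of $wP$ under $G$ is $\conj_w(P)=wPw^{-1}$, so the isotropy under the connected subgroup $G^\perp$ is $G^\perp\cap\conj_w(P)$, giving $G^\perp\cdot wP=G^\perp/(G^\perp\cap\conj_w(P))$, as claimed. That this quotient really is a rational homogeneous $G^\perp$-variety --equivalently that $G^\perp\cap\conj_w(P)$ is parabolic in $G^\perp$-- is already guaranteed by Proposition \ref{prop:fixcomp}; alternatively it follows from the inclusion $G^\perp\cap\conj_w(P)\supseteq G^\perp\cap\conj_w(B)$, the right-hand side being a Borel subgroup of $G^\perp$ as established in the proof of Proposition \ref{prop:fixedareflags}.

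The one point requiring care is the claim that $\pi(G^\perp\cdot wB)$ is the \emph{whole} fixed component of $G/P$ through $wP$, and not merely a closed subvariety of it. This is precisely where the $G^\perp$-homogeneity supplied by Proposition \ref{prop:fixcomp} is essential: a $G^\perp$-orbit containing $wP$ inside a $G^\perp$-homogeneous fixed component must coincide with that component. With this observation the identification is complete.
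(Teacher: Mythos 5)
Your proposal is correct and follows essentially the same route as the paper, which obtains the corollary by projecting the fixed-point components of $G/B$ described in Proposition \ref{prop:fixedareflags} to $G/P$; the paper leaves this as a one-line ``direct consequence,'' and your argument supplies exactly the details that make it work, namely the $G^\perp$-homogeneity of the components from Proposition \ref{prop:fixcomp} and the stabilizer computation identifying $G^\perp\cdot wP$ with $G^{\perp}/(G^{\perp}\cap \conj_{w}(P))$.
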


\begin{remark}\label{rem:extreme}
It is worth observing that the extremal values of $\mu_L$ are achieved at the points $eB$ and $w_0B$, where $w_0\in W$ denotes the longest element of the Weyl group of $G$. In particular, the sink and the source of the $\C^*$-action induced by $\sigma_i$ are the $G^\perp$-orbits of $eB$, $w_0B$, respectively. More generally, if we consider any non-trivial $\C^*$-action induced by a co-character $\sigma:\Mo(H)\to\Z$, satisfying that $\sigma(\alpha_j)\geq 0$ for every $\alpha_j\in\Delta$, the description of the weights of the $H$-action on the tangent spaces of $G/B$ at $eB$ and $w_0B$  given in Proposition \ref{prop:Bruhat} tells us that the weights with respect to $\C^*$ at $T_{G/B,eB}$, $T_{G/B,w_0B}$ are all non-positive and non-negative, respectively. This implies that $eB,w_0B$ belong to the sink and the source of this action, respectively.
\end{remark}

\subsection{Equalized actions with isolated extremal fixed points on rational homogeneous spaces}\label{ssec:isolatedGP}

Later on we will study the birational maps associated to $\C^*$-actions on rational homogeneous varieties, and we will be interested in the case in which the domain of the birational map is the projective space. For that purpose we will concentrate on the $\C^*$-actions on rational homogeneous varieties that are equalized and have isolated sink and source. As we will see, these actions are completely classified.

\subsubsection*{Equalized actions on rational homogeneous spaces}\label{sssec:equalRH}

Via the correspondence of $\C^*$-actions on rational homogeneous varieties with co-characters and $\Z$-gradings (Section \ref{ssec:torRH}), it is known that equalized $\C^*$-actions correspond to {\em short gradings} (see  \cite{Fra1}), that is to those gradings for which $\fg_m=0$ if and only if $m\neq 0,\pm 1$. Throughout this Section we will always assume that $\fg$ is simple; in this case the possible short gradings of $\fg$ are known (cf. \cite[p.~42]{Tev2}): they correspond, up to conjugation, to some  co-characters of the form $\sigma_i$. The complete list of these co-characters can be read from the Table \ref{tab:short}. 

\renewcommand{\arraystretch}{1.1}
\begin{table}[h!!]
\caption{List of short gradings on simple Lie algebras.\label{tab:short}}
\begin{center}
\begin{tabular}{|C||C|C|C|C|C|C|}
\hline
\fg&\DA_n&\DB_n&\DC_n&\DD_n&\DE_6&\DE_7\\\hline
\sigma_i&\sigma_i\,\,(1\leq i\leq n)&\sigma_1&\sigma_n&\sigma_1,\sigma_{n-1},\sigma_n&\sigma_1,\sigma_6&\sigma_7\\\hline
\fg^\perp&\DA_{i-1}\times\DA_{n-i}&\DB_{n-1}&\DA_{n-1}&\DD_{n-1},\DA_{n-1},\DA_{n-1}&\DD_5,\DD_5&\DE_6\\\hline
\end{tabular}
\end{center}
\end{table}

\begin{remark}[Products]\label{rem:prods}
In the case in which $\fg$ is semisimple but not simple, a short grading on $\fg=\bigoplus_k\fg^k$ is given by the choice of a short grading on each of its simple direct summands $\fg^k$. If these short gradings are given by co-characters $\sigma_{i_k}$ in the list of Table \ref{tab:short}, then the short grading on $\fg=\bigoplus_k\fg^k$ is given by the co-character that we denote $\sum_k\sigma_{i_k}$. Translated into the language of $\C^*$-actions, an equalized $\C^*$-action on a rational homogeneous variety of the form $\prod_{k=1}^rG^k/P^k$, with $G^1,\dots,G^r$ simple, is given by the product of an equalized $\C^*$-action on every $G^k/P^k$. 
\end{remark}

\subsubsection*{Actions of $\C^*$ with isolated sink or source}\label{sssec:isolRH}

The following statement tells us when a $\C^*$-action associated to a co-character $\sigma_i$ on a rational homogeneous $G$-variety has isolated sink.

\begin{proposition}\label{prop:isofixed}
The $\C^*$-action  associated to the choice of a co-character $\sigma_i$ on a rational homogeneous $G$-variety $\cD(I)$ has isolated sink if and only if $I=\{i\}$.
\end{proposition}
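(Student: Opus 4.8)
The plan is to identify the sink of the action explicitly as a homogeneous space for the transversal group $G^\perp$ and to detect exactly when it collapses to a point. By Remark \ref{rem:extreme} the minimum of the linearized weight $\mu_L$ is attained at $eB$ (upstairs on $G/B$), hence at $eP$ on $\cD(I)=G/P$; thus the sink $Y_-$ is the fixed point component through $eP$. Applying Corollary \ref{cor:fixedareflags} with $w=e$ (so that $\conj_e(P)=P$) gives
$$
Y_-=\frac{G^\perp}{G^\perp\cap P}.
$$
Since $G^\perp$ is connected this component is irreducible, so ``isolated sink'' is equivalent to $\dim Y_-=0$, i.e. to the inclusion $G^\perp\subseteq P$. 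The whole problem therefore reduces to deciding, in terms of $i$ and $I$, when the transversal group is contained in the parabolic $P$.

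First I would pass to Lie algebras, turning $G^\perp\subseteq P$ into $\fg^\perp\subseteq\fp_I$ (both subgroups being connected and containing $H$). Here I would invoke the two root-theoretic descriptions already available in the text. On one hand, by Definition \ref{def:transversal} and the description of $\sigma_i$, the reductive algebra $\fg_0$ is spanned by $\fh$ together with the root spaces $\fg_\alpha$ whose roots $\alpha$ have zero $\alpha_i$-coefficient; consequently $\fg^\perp=[\fg_0,\fg_0]$ is the semisimple subalgebra whose root system consists of the roots supported on $D\setminus\{i\}$ (this is exactly the content of the $\fg^\perp$-row of Table \ref{tab:short}). On the other hand, from $P=BW(D\setminus I)B$ one reads off
$$
\fp_I=\fb\oplus\bigoplus_{\substack{\alpha\in\Phi^+\\ \supp(\alpha)\subseteq D\setminus I}}\fg_{-\alpha},
$$
so that $\fg_{-\alpha}\subseteq\fp_I$ precisely when $\supp(\alpha)\subseteq D\setminus I$.

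The comparison then becomes elementary. All of $\fh$ and the positive root spaces of $\fg^\perp$ already lie in $\fb\subseteq\fp_I$, so the only constraint is that every negative root space $\fg_{-\alpha}$ of $\fg^\perp$ --- that is, every $\alpha\in\Phi^+$ with $\supp(\alpha)\subseteq D\setminus\{i\}$ --- satisfy $\supp(\alpha)\subseteq D\setminus I$. Testing this on the simple roots $\alpha_j$ with $j\in D\setminus\{i\}$ forces $j\notin I$, hence $I\subseteq\{i\}$; conversely $I\subseteq\{i\}$ gives $D\setminus\{i\}\subseteq D\setminus I$, so that every root supported on $D\setminus\{i\}$ is supported on $D\setminus I$, whence $\fg^\perp\subseteq\fp_I$. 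Thus $G^\perp\subseteq P$ if and only if $I\subseteq\{i\}$, and since $I\neq\emptyset$ (otherwise $\cD(I)$ would be a point) this forces $I=\{i\}$.

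The routine parts are precisely the two root-space descriptions above; the one genuine point requiring care --- and the main potential obstacle --- is the normalization of conventions: checking that $eB$ really lies in the \emph{sink} (rather than the source) under the paper's sink/source convention, and that the marking $I$ corresponds to the parabolic whose Levi is supported on $D\setminus I$, so that the support conditions come out with the correct complement. Once these conventions are pinned down, the equivalence $I=\{i\}$ follows immediately.
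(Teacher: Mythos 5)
Your proof is correct and follows essentially the same route as the paper's: identify the sink as the component through $eP$ via Remark \ref{rem:extreme}, apply Corollary \ref{cor:fixedareflags} to reduce the question to the inclusion $G^\perp\subseteq P$, and conclude $I=\{i\}$. The only difference is that you spell out the root-theoretic verification of the final implication (comparing the root spaces of $\fg^\perp$ with those of $\fp_I$), a step the paper asserts without detail; your verification is accurate, including the correct handling of the sink/source convention and of the parabolic associated to the marked set $I$.
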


\begin{proof}
Write $\cD(I)$ as $G/P$ for some $P\supset B$. By Remark \ref{rem:extreme}, the sink of the action will be the fixed point component passing by $eP$. Then, by Corollary \ref{cor:fixedareflags}, the action has isolated sink if and only if 
$G^\perp\subset P$, and this is only possible if $I=\{i\}$.
\end{proof}

\begin{remark}[Reversion]\label{rem:revert}
A similar argument allows us to state that our $\C^*$-action has isolated source if and only if $G^\perp\subset \conj_{w_0}P$. Now we use the description of $w_0$ that we have recalled in Remark \ref{rem:longest}. If the action of $w_0$ on $\Mo(H)$ equals $-\id$, then $ \conj_{w_0}P$ is nothing but the opposite parabolic subgroup of $P$, which contains $G^\perp$ if and only if $P$ does. This tells us that in the cases:
$$
\DB_n,\quad\DC_n,\quad\DD_n\,\,\mbox{($n$ even)},\quad \DE_7
$$
the action of $\C^*$ associated to $\sigma_i$ on $\cD(I)$ has isolated sink and source, if and only if $I=\{i\}$. 

In the remaining cases, we use Remark \ref{rem:longest} to identify the parabolic subgroup $\conj_{w_0}(P)$, and to check when it does contain $G^\perp$ obtaining:
\begin{itemize}
\item for every $i$, $\DA_n(I)$ has isolated sink if and only if $I=\{i\}$ and isolated source if and only if $I=\{n+1-i\}$; in particular, if $n$ is odd and $i=(n+1)/2$, $\DA_{n}(i)$ has isolated sink and source;
\item for $i=1$, $\DD_n(I)$ has isolated sink and source if and only if $I=\{1\}$; for $i=n-1,n$, the action on $\DD_n(I)$ has isolated sink if and only if $I=\{i\}$, and isolated source if and only if $I=\{n-1,n\}\setminus\{i\}$;
\item for $i=1,6$, the action on $\DE_6(I)$ has isolated sink if and only if $I=\{i\}$, and isolated source if and only if $I=\{1,6\}\setminus\{i\}$.
\end{itemize}

\end{remark}

\begin{remark}[Fixed points on products]\label{rem:isofixed2}
Note that, in the setup of Proposition \ref{prop:isofixed}, even if $\fg$ not simple, a rational homogeneous variety of the form $\cD(i)$ is always a quotient of a semisimple algebraic group with simple Lie algebra; more concretely, if we denote by $\cD'\subset \cD$ the connected component of $\cD$ containing the node $i$, we have $\cD(i)=\cD'(i)$.  
Then, if $\fg$ is a direct sum $\sum_{k=1}^r\fg^k$ of simple Lie algebras, and we choose a node $i_k$ on the Dynkin diagram $\cD_k$ of $\fg^k$, for every $k$, the $\C^*$-action on the variety $\cD(i_1,\dots,i_r)=\cD_1(i_1)\times\dots\times \cD_r(i_r)$ given by the co-character $\sum_{k}\sigma_{i_k}$ is the product of the actions of $\C^*$ on the $\cD_r(i_r)$'s given by the co-characters $\sigma_{i_k}$. Then the fixed point components of this action will be the products of the fixed point components of the $\C^*$-actions on each factor and, in particular, the sink of the action will be isolated. For instance, the action induced by the character $\sigma_1+\sigma_1$ on $\DA_n(1)\times \DA_m(1)$ has four fixed components, isomorphic to a point (the sink), $\DA_{n-1}(1)$, $\DA_{m-1}(1)$, and $\DA_{n-1}(1)\times \DA_{m-1}(1)$.
\end{remark}

Proposition \ref{prop:isofixed} provides a list of rational homogeneous varieties  of Picard number one admitting equalized $\C^*$-actions with isolated sink. Following Remarks \ref{rem:prods}, \ref{rem:isofixed2}, actions of this kind on  rational homogeneous varieties of arbitrary Picard number are obtained by considering products of these.  

In order to study the remaining fixed point components of each of these actions, one may proceed as follows. First of all one considers the natural embedding of $\cD(i)$ on the irreducible representation $V_{\lambda_i}$ of $\fg$ associated to the fundamental weight $\lambda_i\in\Mo(H)$. Then one considers the short grading of $\fg$ determined by the root $\alpha_i$, splits $V_{\lambda_i}$ as a direct sum of irreducible $\fg_0$-modules, and studies the intersection of $\cD(i)$ with the corresponding projectivizations. The outcome of the process  is the following:

\begin{proposition}\label{prop:equalisol}
Let $G$ be a semisimple group, with simple Lie algebra. The complete list of rational homogeneous $G$-varieties 
admitting equalized $\C^*$-actions with an extremal isolated fixed point, and of their irreducible fixed point components, is given in Table \ref{tab:equalisol}.
\begin{table}[!h!!]\caption{Equalized $\C^*$-actions with isolated extremal fixed points on rational homogeneous spaces of Picard number one.
\label{tab:equalisol}}
\begin{center}
\begin{tabular}{|c|C|C|C|}
\hline
$\fg$&\cD(i)&\mbox{weights } j&\mbox{fixed point comp. of weight $j$}\\\hline\hline
\multirow{3}{*}{$\DA_n$}&\DA_n(i)&0,\dots,\min\{i,n-i+1\}&\DA_{i-1}(i-j)\times \DA_{n-i}(j)\\\cline{2-4}
&\multirow{2}{*}{$\DA_n(n+1-i)$}&\max\{0,n-2i+1\},\dots&\multirow{2}{*}{$\DA_{n-i}(j)\times \DA_{i-1}(n+1-i-j)$}\\
& &\dots  , n-i+1&\\\hline
$\DC_n$&\DC_n(n)&0,\dots,n&\DA_{n-1}(j)\\\hline
$\DB_n$&\DB_n(1)&0,1,2&\DB_{n-1}(0),\DB_{n-1}(1),\DB_{n-1}(0)\\\hline
$\DD_n$&\DD_n(1)&0,1,2&\DD_{n-1}(0),\DD_{n-1}(1),\DD_{n-1}(0)\\\hline
\multirow{2}{*}{$\DD_n$}&\DD_n(n)&0,\dots,\lfloor n/2\rfloor&\DA_{n-1}(n-2j)\\\cline{2-4}
&\DD_n(n-1)&0,\dots,\lfloor n/2\rfloor&\DA_{n-1}(n-2(\lfloor n/2\rfloor-j))\\\hline
\multirow{2}{*}{$\DE_6$}&\DE_6(1)&0,1,2&\DD_5(6),\DD_5(5),\DD_5(1)\\\cline{2-4}
&\DE_6(6)&0,1,2&\DD_5(1),\DD_5(5),\DD_5(6)\\\hline
$\DE_7$&\DE_7(7)&0,1,2,3&\DE_6(0),\DE_6(1),\DE_6(6),\DE_6(7)\\\hline
\end{tabular}
\end{center}
\end{table}
\end{proposition}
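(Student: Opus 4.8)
The plan is to split the statement into two tasks: first, to pin down which rational homogeneous $G$-varieties of Picard number one can occur, and second, for each of them, to compute the fixed point components together with their weights. For the first task I would combine three facts already at hand. By \ref{sssec:equalRH} an equalized $\C^*$-action corresponds to a short grading of $\fg$, and for $\fg$ simple these are exactly the ones induced by the co-characters $\sigma_i$ recorded in Table \ref{tab:short}. By Proposition \ref{prop:isofixed} the action on $\cD(I)$ has isolated sink precisely when $I=\{i\}$, and by Remark \ref{rem:revert} the source is isolated as well in the relevant cases. Matching the admissible pairs $(\fg,i)$ of Table \ref{tab:short} against the requirement $I=\{i\}$ then produces exactly the varieties $\cD(i)$ in the left column of Table \ref{tab:equalisol} (in type $\DA$ every Grassmannian $\DA_n(i)$ occurs, the two displayed rows recording the dual pair $\DA_n(i)$, $\DA_n(n+1-i)$, which are handled by the same computation).

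For the second task I would follow the representation-theoretic recipe sketched just before the statement. Embed $X=\cD(i)$ in $\P(V_{\lambda_i})$ by its minimal embedding, so that the $\C^*$-action is the restriction of the linear action on the fundamental module $V_{\lambda_i}$. As in the proof of Proposition \ref{prop:fixcomp}, this grades $V_{\lambda_i}=\bigoplus_j V_j$ into $\C^*$-weight spaces, normalized so that the sink has weight $0$, and $X^{\C^*}=\bigsqcup_j(\P(V_j)\cap X)$. Since $\fg^\perp$ commutes with $\C^*$ (Remark \ref{rem:commute}), each $V_j$ is a $\fg^\perp$-submodule carrying a single $\C^*$-weight; hence decomposing the restriction $V_{\lambda_i}|_{\fg^\perp}$ into irreducibles simultaneously yields the list of weights $j$ and the relevant $\fg^\perp$-modules. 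By Proposition \ref{prop:fixcomp} each component of $\P(V_j)\cap X$ is $G^\perp$-homogeneous, and since $V_j$ turns out to be $\fg^\perp$-irreducible in every case at hand, that component is the unique closed $G^\perp$-orbit in $\P(V_j)$, namely the rational homogeneous variety attached to the highest weight of $V_j$ (a trivial summand giving an isolated extremal point). Reading off these highest weights gives the last column of the table.

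It then remains to carry out the branchings case by case. In type $\DA$ one writes $V_{\lambda_i}=\Wedge{i}(U\oplus W)$ with $\dim U=i$, $\dim W=n+1-i$, so $V_j=\Wedge{i-j}U\otimes\Wedge{j}W$ has closed orbit $\DA_{i-1}(i-j)\times\DA_{n-i}(j)$ for $0\le j\le\min\{i,n-i+1\}$. For $\DB_n(1)$ and $\DD_n(1)$ the standard module splits as $\C v_+\oplus V_0\oplus\C v_-$, giving a point, the sub-quadric $\DB_{n-1}(1)$ (resp.\ $\DD_{n-1}(1)$), and a point. For $\DC_n(n)$ one uses $\C^{2n}=U\oplus U^\vee$ with $U$ Lagrangian: a $\C^*$-fixed Lagrangian is determined by $L\cap U$, so the components are the Grassmannians $\DA_{n-1}(j)$, $0\le j\le n$. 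The spinor cases $\DD_n(n)$, $\DD_n(n-1)$ use the decomposition of the half-spin modules as $\bigoplus_k\Wedge{k}U$ over $\fsl(U)$ with $k$ of fixed parity, yielding $\DA_{n-1}(n-2j)$. Finally, for the exceptional rows I would invoke the classical branchings $\mathbf{27}=\mathbf{1}\oplus\mathbf{16}\oplus\mathbf{10}$ for $\DE_6\supset\DD_5$ and $\mathbf{56}=\mathbf{1}\oplus\mathbf{27}\oplus\overline{\mathbf{27}}\oplus\mathbf{1}$ for $\DE_7\supset\DE_6$, whose projectivized closed orbits are, respectively, a point, a spinor variety and an $8$-dimensional quadric, and a point, the two $16$-dimensional Cartan varieties $\DE_6(1)$, $\DE_6(6)$, and a point.

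The part I expect to require genuine work is this last, case-by-case step: producing the $\fg^\perp$-module decomposition of $V_{\lambda_i}$ in the exceptional types, and fixing the weight normalization so that the sink receives weight $0$ and the critical values come out as the integers in the table (this also settles the ordering of the components, e.g.\ that the isolated point is the sink rather than the source). The identification of $\P(V_j)\cap X$ with the single closed orbit is then essentially automatic: $G^\perp$ is connected and semisimple, so on a nontrivial irreducible $V_j$ it has no fixed point in $\P(V_j)$ and a unique closed orbit, leaving no room for extra components. As an independent consistency check one may recompute the same components combinatorially via Corollary \ref{cor:fixedareflags}, writing the component through $wP$ as $G^\perp/(G^\perp\cap\conj_w P)$, and recover the weights from the formula in Corollary \ref{cor:weights}.
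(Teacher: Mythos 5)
Your proposal is correct and follows essentially the same route as the paper, which classifies the admissible pairs $(\fg,i)$ via short gradings (Table \ref{tab:short}) together with Proposition \ref{prop:isofixed}, and then determines the fixed point components exactly as you do: embedding $\cD(i)$ in $\P(V_{\lambda_i})$, splitting $V_{\lambda_i}$ into irreducible $\fg_0$-modules, and intersecting $\cD(i)$ with the corresponding projectivizations. The paper leaves the case-by-case branchings implicit, so your explicit decompositions (wedge powers in type $\DA$, the Lagrangian and spinor cases, and the $\mathbf{27}$ and $\mathbf{56}$ branchings) simply supply the details the paper's outline takes for granted.
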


\begin{remark}\label{rem:equalisol}
Table \ref{tab:equalisol} above must be read with the following conventions: first of all, for any $k$, we set $\cD(k+1)=\cD(0)$ to be a point for any diagram with $k$ nodes $\cD$. Second, the weights of the fixed point components are taken with respect to a linearization of the ample generator of the Picard group of the corresponding variety, set to have weight $0$ at the sink of the action. In this way we can see that the criticality and the bandwidth of the action coincide with the maximal weight $\delta$ of the action. 
\end{remark}

\subsubsection*{$\C^*$-actions with isolated sink and source}\label{sssec:isol2RH}

Finally, as a consequence of Proposition \ref{prop:equalisol}, we get a description of all the actions with isolated extremal fixed points, which will have Cremona transformations as their induced birational maps.

\begin{corollary}\label{cor:equalisol}
The complete list of rational homogeneous spaces of Picard number one admitting equalized $\C^*$-actions with isolated sink and source is given in Table \ref{tab:equalisol2}.
\renewcommand{\arraystretch}{1.2}
\begin{table}[!h!!]
\caption{Equalized $\C^*$-actions with isolated sink and source.\label{tab:equalisol2}}
\begin{center}
\begin{tabular}{|r||C|C|C|C|C|C|}
\hline
variety &\DA_{2n-1}(n)&\DC_n(n)&\DB_n(1)&\DD_n(1)&\DD_n(n)&\DE_7(7)\\\hline
restrictions &n\geq 1&n\geq 2&n\geq 2&n\geq 4&n\geq 6, \,\,n\, \mbox{even}&\\\hline
dimension&n^2&{n+1}\choose{2}&2n-1&2n-2&{n}\choose{2}&27\\\hline
$\delta$&n&n&2&2&n/2&3\\\hline
\end{tabular}
\end{center}
\end{table}
\end{corollary}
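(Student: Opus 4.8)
The plan is to deduce the statement from Proposition~\ref{prop:equalisol} by supplementing it with the analysis of the \emph{source} carried out in Remark~\ref{rem:revert}. Since a rational homogeneous space of Picard number one has the form $\cD(i)$ for a node $i$ of a connected Dynkin diagram, we may assume $\fg$ simple; then Proposition~\ref{prop:equalisol}, together with Proposition~\ref{prop:isofixed}, already lists all such $\cD(i)$ carrying an equalized $\C^*$-action with an isolated \emph{sink}, namely the varieties of Table~\ref{tab:equalisol} attached to the short gradings $\sigma_i$ of Table~\ref{tab:short}. As sink and source are interchanged by reversing the action, it remains to single out the entries of this list whose source is isolated as well.

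By Remark~\ref{rem:revert} the source of the $\sigma_i$-action on $\cD(i)=G/P$ is isolated if and only if $G^\perp\subset\conj_{w_0}(P)$, and the description of the longest element $w_0$ recalled in Remark~\ref{rem:longest} splits the discussion in two. When $w_0=-\id$---types $\DB_n$, $\DC_n$, $\DD_n$ with $n$ even, and $\DE_7$---the subgroup $\conj_{w_0}(P)$ is the parabolic opposite to $P$, so that $G^\perp\subset\conj_{w_0}(P)$ holds exactly when $G^\perp\subset P$, i.e.\ under the same condition $I=\{i\}$ that makes the sink isolated. Hence every sink-candidate of these types survives: $\DB_n(1)$, $\DC_n(n)$, $\DE_7(7)$, and, for $n$ even, the three varieties $\DD_n(1)$, $\DD_n(n-1)$, $\DD_n(n)$; as $\DD_n(n-1)\cong\DD_n(n)$, only $\DD_n(1)$ and $\DD_n(n)$ are recorded.

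In the remaining types $\DA_n$, $\DE_6$ and $\DD_n$ with $n$ odd one has $w_0=-\id\circ\tau$, where $\tau$ is the diagram automorphism induced by the nontrivial symmetry of $\cD$; a short computation then identifies $\conj_{w_0}(P)$ with the parabolic opposite to $P_{\tau(i)}$, so that the source is isolated precisely when $\tau(i)=i$. For $\DA_n$ this forces $i=n+1-i$, hence an odd rank; rewriting it as $2n-1$ produces the family $\DA_{2n-1}(n)$. The swap $1\leftrightarrow 6$ in $\DE_6$ discards both $\DE_6(1)$ and $\DE_6(6)$, whereas for $\DD_n$ with $n$ odd the swap of the spinorial nodes $n-1,n$ discards $\DD_n(n-1)$ and $\DD_n(n)$ but keeps $\DD_n(1)$. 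Together with the previous case this leaves exactly $\DA_{2n-1}(n)$, $\DC_n(n)$, $\DB_n(1)$, $\DD_n(1)$ (for all admissible $n$), $\DD_n(n)$ ($n$ even) and $\DE_7(7)$, as claimed.

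Finally I would record the numerical data. The dimensions are the standard ones ($n^2$ for the Grassmannian $\DA_{2n-1}(n)$, $\binom{n+1}{2}$ for the Lagrangian Grassmannian $\DC_n(n)$, $2n-1$ and $2n-2$ for the two families of quadrics, $\binom{n}{2}$ for the spinor variety $\DD_n(n)$ and $27$ for $\DE_7(7)$), while the bandwidth $\delta$ is the maximal weight in Table~\ref{tab:equalisol}, which by Remark~\ref{rem:equalisol} equals the criticality. The range restrictions are imposed so that each row is a genuinely new simple case: $n\geq 4$ keeps $\DD_n$ irreducible and non-exceptional, and $n\geq 6$ even is needed for $\DD_n(n)$ since $\DD_4(4)$ already appears as the quadric $\DD_4(1)$ under triality. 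The only delicate point in the whole argument is the bookkeeping around the diagram automorphism in the case $w_0\neq-\id$; once that is handled, the corollary is a direct transcription of Proposition~\ref{prop:equalisol}.
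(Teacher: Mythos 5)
Your proof is correct and follows essentially the same route as the paper: the paper obtains this corollary by combining Proposition~\ref{prop:isofixed} and Proposition~\ref{prop:equalisol} with the $w_0$-analysis of Remark~\ref{rem:revert}, which is exactly your case split according to whether $w_0=-\id$ or $w_0=-\id\circ\tau$. Your explicit identification of $\conj_{w_0}(P)$ with the parabolic opposite to $P_{\tau(i)}$, and the bookkeeping on dimensions, bandwidths and the exclusions ($\DD_n(n-1)\cong\DD_n(n)$, triality for $\DD_4$) all match the paper's intended argument.
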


\begin{remark}\label{rem:bispecial}
In the list above only four cases are geometric realizations of bispecial Cremona transformations (see \cite[Definition~4.1]{WORS4a}); they are exactly the varieties classified in \cite[Theorem~8.10]{WORS1}, which correspond to the whole list of quadro-quadric special Cremona transformations of Ein and Shepherd-Barron, associated to the four Severi varieties:
$$
\DA_{5}(3),\quad\DC_3(3),\quad\DD_6(6),\quad \DE_7(7).
$$
Note also that the list of quadro-quadric Cremona transformations with smooth fundamental locus (classified in \cite[Proposition~5.6]{PiRu}) is completed  by adding the birational map induced by the equalized $\C^*$-action on $\P^1\times Q^n$, where $Q^n$ denotes the $n$-dimensional smooth quadric. From the description of the fixed point components of the action, it follows that the fundamental locus of the associated birational map in this case is the union of a quadric $Q'$ of dimension $n-2$ and a point outside of the linear span of $Q'$. 
\end{remark}

%!TEX root = WORS5.tex

\section{Equivariant Cremona transformations}\label{sec:examples2}

This Section is devoted to the Cremona transformations induced by the equalized $\C^*$-actions with isolated sink and source on rational homogeneous varieties (Corollary \ref{cor:equalisol}). An important property satisfied by these transformations is that they commute with the action of a semisimple group: the transversal group $G^\perp$. As we will see, under this representation-theoretical point of view one may interpret these Cremona transformations as maps of inversion. 

A first important observation is the following:

\begin{corollary}\label{cor:homocremona}
Let $X=G/P$ be a rational homogeneous variety endowed with the $\C^*$-action associated to a co-character $\sigma_i:\Mo(H)\to \Z$, and let $G^\perp$ be the corresponding transversal group. Denote by $Y_\pm$ the sink and source of the $\C^*$-action, and by $\psi:\P(\cN_{Y_-|X}^\vee)\dashrightarrow \P(\cN_{Y_+|X}^\vee)$ the induced birational transformation. Then the action of $G^\perp$ on $X$ induces actions on $\P(\cN_{Y_\pm|X}^\vee)$ so that $\psi$ is $G^\perp$-equivariant. 
\end{corollary}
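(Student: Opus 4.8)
The plan is to deduce everything from the fact, recorded in Remark \ref{rem:commute}, that the actions of $\C^*$ and $G^\perp$ on $X$ commute; once this is in hand the construction of $\psi$, which is phrased purely in terms of $\C^*$-orbits and their limits, is manifestly natural with respect to any automorphism of $X$ commuting with $\C^*$, and the whole corollary becomes a bookkeeping exercise in propagating this naturality through the Bia\l ynicki-Birula identifications.

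First I would produce the two $G^\perp$-actions on the projective bundles. Since $G^\perp$ commutes with $\C^*$ it preserves the fixed point set $X^{\C^*}$, and being connected it fixes each of its components; in particular $Y_-$ and $Y_+$ are $G^\perp$-invariant, as already observed in Remark \ref{rem:commute}. The $G^\perp$-action on $X$ thus restricts to an action on $Y_\pm$ and, by differentiating along $Y_\pm$, to a fiberwise linear action on the normal bundle $\cN_{Y_\pm|X}$. Because $G^\perp$ commutes with $\C^*$, this linear action preserves the weight decomposition $\cN_{Y_\pm|X}=\cN^-(Y_\pm)\oplus\cN^+(Y_\pm)$ of \ref{sssec:BB}; in any case it descends to an action on the dual bundle, hence on $\P(\cN_{Y_\pm|X}^\vee)$, and this is the action referred to in the statement.

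Next I would check that the two sides of the birational identification are $G^\perp$-equivariant. The sink and source maps $x\mapsto x^{\pm}$ are $G^\perp$-equivariant, since for $g\in G^\perp$ commutativity gives $g(x^{\pm})=g\big(\lim_{t^{\pm1}\to0}tx\big)=\lim_{t^{\pm1}\to0}t(gx)=(gx)^{\pm}$. Consequently the BB-cells $X^{\pm}(Y_\pm)$ are $G^\perp$-invariant, the BB-isomorphisms $\cN^{\pm}(Y_\pm)\simeq X^{\pm}(Y_\pm)$ of \ref{sssec:BB} are $G^\perp$-equivariant, and the quotient maps $X^{\pm}(Y_\pm)\to\P_{Y_\pm}(\cN_{Y_\pm|X}^\vee)$ of \ref{sssec:induced} are $G^\perp$-equivariant (the action descends to the quotients precisely because it commutes with the $\C^*$ whose orbits are being collapsed). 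Assembling this, $\psi$ is induced by the common open subset $X^-(Y_-)\cap X^+(Y_+)$ of orbits joining the sink to the source; this set is $G^\perp$-invariant by the above, and on it $\psi$ is the composite of one quotient map with the inverse of the other, so $\psi$ is $G^\perp$-equivariant as a rational map.

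The only point requiring care --- the one I would regard as the main, though mild, obstacle --- is matching, on the overlap, the two descriptions of the $G^\perp$-action on $\P(\cN_{Y_\pm|X}^\vee)$: the one defined abstractly via differentials along $Y_\pm$, and the one inherited through the BB-isomorphism $X^{\pm}(Y_\pm)\simeq\cN^{\pm}(Y_\pm)$. That these coincide follows from the naturality of the BB-isomorphism with respect to automorphisms commuting with $\C^*$, but it is the place where the argument must be pinned down so that the equivariances of the two quotient maps genuinely compose to the equivariance of $\psi$.
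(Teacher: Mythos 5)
Your proof is correct and follows essentially the same route as the paper's: both rest on Remark \ref{rem:commute} to get $G^\perp$-invariance of $Y_\pm$ (hence the actions on $\P(\cN_{Y_\pm|X}^\vee)$) and then conclude equivariance of $\psi$ from the fact that $G^\perp$ permutes $\C^*$-orbits, i.e.\ acts on the geometric quotients that $\P(\cN_{Y_\pm|X}^\vee)$ parametrize. The compatibility issue you flag at the end is precisely what the paper sidesteps by taking the quotient (orbit-space) description of $\P(\cN_{Y_\pm|X}^\vee)$ as the definition of the induced action, so your extra care is harmless elaboration rather than a divergence.
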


\begin{proof}
As we noted in Remark \ref{rem:commute} the action of $G^\perp$ on $X$ leaves invariant $Y_\pm$, hence it extends to an action on $\P(\cN_{Y_\pm|X}^\vee)$. Moreover, since $\P(\cN_{Y_\pm|X}^\vee)$ can be thought of as the geometric quotient parametrizing $1$-dimensional $\C^*$-orbits converging to $Y_\pm$, and $G^\perp$ sends closures of $\C^*$-orbits to closures of $\C^*$-orbits (Remark \ref{rem:commute}), then $\psi$ is $G^\perp$-equivariant. 
\end{proof}

Applying \ref{sssec:induced} to each of the actions described in Table \ref{tab:equalisol} --in which the source is a point-- we get a birational transformation $\psi$ onto the projectivization of the tangent space of $X$ at the point $Y_+$. When the sink $Y_-$ is positive dimensional, $\P(\cN^\vee_{Y_-|X})$ is the projectivization of a homogeneous bundle over $Y_-$. In particular, with the exception of the case in which $X=\P^n$ and the sink of the action is a hyperplane, the variety $\P(\cN^\vee_{Y_-|X})$ has Picard number bigger than one. On the other hand, in the cases of Table \ref{tab:equalisol2}, we obtain Cremona transformations. 

\begin{definition}\label{def:Cremonaequiv}
Let $G$ be a semisimple group, and consider two projective representations $\P(V_1)$, $\P(V_2)$ of $G$ of the same dimension. A Cremona transformation $\psi:\P(V_1)\to \P(V_2)$ is said to be {\em $G$-equivariant} if it commutes with the action of $G$.
\end{definition}

The most obvious examples of equivariant Cremona transformations are those listed in Table \ref{tab:equalisol2}, that we will now describe geometrically. Let us also pose the following  problem that arises naturally at this point: 

\begin{problem}\label{prob:equivariant}
Classify $G$-equivariant Cremona transformations among two irreducible projective representations of a semisimple group $G$, constructing geometric realizations for them.
\end{problem}

\subsection{Example: Quadrics}\label{ssec:quadrics}

The least interesting of the birational transformations induced by the actions of Table \ref{tab:equalisol2} is the case of the non-singular quadrics $Q$ of the form $\DB_n(1)$ or $\DD_n(1)$; following \cite[Section 5.2]{WORS3}, the fact that the criticality of the action is two, and that the extremal fixed points are isolated, forces $\psi$ to be an isomorphism. 

This can be also seen as follows: the $\C^*$-action on $Q$ has two extremal fixed points $P_-$, $P_+$, which are not linked by a line in $Q$. Every tangent direction $v_-\in \P(T_{Q,P_-}^\vee)$ determines a plane $\pi_{v_-}$ containing $P_-,P_+$ and the direction $v_-$. This plane meets $Q$ along a conic $C$ passing through $P_-,P_+$; this conic is reduced, not necessarily irreducible, and it is non-singular at $P_-$, $P_+$. Moreover, since the action is equalized, the plane $\pi_{v_-}$ is $\C^*$-invariant, hence $C$ is $\C^*$-invariant and we may conclude that $\psi(v_-)=[T_{C,P_+}]\in \P(T_{Q,P_+}^\vee)$. In particular $\psi$ is well-defined for every $v_-\in \P(T_{Q,P_-}^\vee)$, so it is regular. The same clearly holds for its inverse.

\subsection{Example: Balanced Grassmannians}\label{ssec:grass}

Let us consider two complex vector spaces $V_-,V_+$ of dimension $n\geq 1$, and the action of $\C^*$ on $V_-\oplus V_+$ that leaves $V_-$, $V_+$ invariant,  whose weights on these spaces are $0$ and $1$, respectively. Then we consider the induced action on the Grassmannian $\DA_{2n-1}(n)$ of $n$-dimensional linear subspaces of $V_-\oplus V_+$. Its extremal fixed points correspond to the subspaces $V_-$ and $V_+$. 
By \ref{sssec:induced}, the action of $\C^*$ induces a Cremona transformation among the projectivizations of the tangent spaces of $\DA_{2n-1}(n)$ at the extremal fixed point components:
$$
\psi:\P(V_-^\vee\otimes V_+)\dashrightarrow \P(V_+^\vee\otimes V_-).
$$
In order to describe this map, we note that a general element of $\DA_{2n-1}(n)$ is determined by a skew-symmetric tensor:
$$
s:=(v_1^-+v_1^+)\wedge \dots \wedge (v_n^-+v_n^+),
$$
where $v^-_1,\dots, v^-_n\in V_-$ and  $v^+_1,\dots, v^+_n\in V_+$ are linearly independent vectors.
Since the orbit of $[s]$ in $\DA_{2n-1}(n)$ is the set of points of the form:
$$
t[s]=\left[(v_1^-\wedge\dots \wedge v_n^-)+ t\sum_{i=1}^n (v_1^-\wedge\dots \wedge v_i^+ \wedge\dots \wedge v_n^-)+O(t^2)\right],
$$

then the limiting point of the orbit of $[s]$ in the blowup of $\DA_{2n-1}(n)$ along the source and the sink when $t$ goes to $0$ is:
$$
\lim_{t\to 0}t[s]=\left[\sum_{i=1}^n v_1^-\wedge\dots \wedge v_i^+ \wedge\dots \wedge v_n^-\right],
$$
and analogously:
$$
\lim_{t\to \infty}t[s]=\left[\sum_{i=1}^n v_1^+\wedge\dots \wedge v_i^- \wedge\dots \wedge v_n^+\right].
$$
These points can be identified, respectively, with the class of the linear map from $V_-$ to $V_+$ sending every $v_i^-$ to $v_i^+$, and its inverse. In other words, as noted by Thaddeus in \cite[Section 4]{Thaddeus}, the map $\psi$ is the projectivization of the inversion map. 

\begin{remark}\label{rem:inversion}
Note that, in the particular case $n=3$, the exceptional locus of $\psi$ is a Segre variety $\P^2\times \P^2\subset \P^8$, which is the Severi variety associated to the division algebra $\C$. The map $\psi$ is one of the special Cremona transformations characterized in \cite{ESB}. 
\end{remark}

As we noted in the Introduction, the standard Cremona transformation from $\P^{n-1}$ to $\P^{n-1}$ is a restriction of the projectivization of the inversion map of $n\times n$ matrices. Moreover, its geometric realization given by the diagonal natural $\C^*$-action on $(\P^1)^n$ can be $\C^*$-equivariantly embedded into $\DA_{2n-1}(n)$. We would like to present here the problem of classifying the Cremona transformations that satisfy this property. Philosophically speaking, one would like to understand how far is the inverse map from being a ``universal'' Cremona transformation.

\begin{problem}\label{prob:immersion}
Classify Cremona transformations that are restrictions of the inversion map (Example \ref{ssec:grass}), studying when their geometric realizations may be embedded $\C^*$-equivariantly into $\DA_{2n-1}(n)$. 
\end{problem}

\subsection{Example: Isotropic Grassmannians}\label{ssec:isgrass}

We refer to \cite{MMW} for more details on this example and its applications. The equalized $\C^*$-action on $\DC_n(n)$ described in Table \ref{tab:equalisol2} can be seen as a restriction of the one we have just studied in the previous Example. In order to see this we start from a direct sum $V_-\oplus V_+$ as above, and consider an isomorphism $\phi:V_-\to V_+^\vee$, which defines a non-degenerate skew-symmetric $2$-form $\omega$ on $V_-\oplus V_+$:
$$
\omega(u^-+u^+,v^-+v^+):=\phi(u^-)(v^+)-\phi^T(u^+)(v^-).
$$
The $\C^*$-action on $V_-\oplus V_+$ of the previous example, which acts trivially on $V_-$ and homothetically on $V_+$, preserves orthogonality with respect to $\omega$, hence leaves invariant the subset $\DC_n(n)\subset \DA_{2n-1}(n)$ of $\omega$-isotropic subspaces. Since $[V_-],[V_+]\in \DC_n(n)$, they are the extremal isolated fixed points of the action. 

Note that the tangent space of $\DC_n(n)$ at $[V_\pm]$ is isomorphic to $S^2V_{\pm}^\vee$, and that its embedding into $T_{\DA_{2n-1}(n),[V_\pm]}\simeq V_\pm^\vee\otimes V_\mp$ is induced naturally by the isomorphism $\phi$. Then the birational map 
$$\psi:\P(S^2V_-^\vee)\dashrightarrow \P(S^2V_+^\vee)\simeq \P(S^2V_-)$$
is just the (projectivization of the) inversion map, restricted to symmetric tensors.

\begin{remark}\label{rem:isgrass}
In the particular case $n=3$, we obtain another special Cremona transformation: the one whose exceptional locus is  the Veronese surface in $\P^5$, which is the Severi variety associated to the division algebra $\R$.
\end{remark}
 
 \subsection{Example: Spinor varieties}\label{ssec:spin}

The case of $\DD_n(n)$ ($n$ even) is similar to the one of $\DC_n(n)$ described above, in the sense that it can also be obtained as a restriction of the case of $\DA_{2n-1}(n)$  by considering a non-degenerate symmetric $2$-form on $V_-\oplus V_+$:
$$
q(u^-+u^+,v^-+v^+)=\phi(u^-)(v^+)+\phi^T(u^+)(v^-),
$$
induced by a given isomorphism $\phi:V_-\to V_+^\vee$.  
This defines a quadric $Q\subset \P(V_-^\vee\oplus V_+^\vee)$ containing $\P(V_-^\vee), \P(V_+^\vee)$, which are invariant by the action of $\C^*$, 
 and so we get a $\C^*$-action on $Q$ with extremal fixed point components $\P(V_-^\vee), \P(V_+^\vee)$. As in the case of $\DC_n(n)$, this induces a $\C^*$-action on the set of linear subspaces contained in $Q$ of maximal dimension, which consists of two (isomorphic) irreducible components, $\DD_n(n-1)$, $\DD_n(n)$. 
 Since the dimension of the intersection of two maximal linear subspaces in $Q$ is congruent with $(n-1)$ modulo $2$ if and only if they belong to the same irreducible family (\cite[Theorem~21.14]{Harr}), it follows that $\P(V_-^\vee), \P(V_+^\vee)$ belong to the same family if and only if $n$ is even. Note that in Table \ref{tab:equalisol2} we have excluded the case $n=4$ since in this case $\DD_4(4)\simeq \DD_4(1)$, and so the action in question has been already described in \ref{ssec:quadrics} .

The interpretation of the birational map $\psi$ in this case is analogous to the case of the isotropic Grassmannian. This time we have isomorphisms:
$$
T_{\DD_{n}(n),[V_\pm]}\simeq \bigwedge^2 V_\pm^\vee,
$$
and we may think of
$$\psi:\P\left(\bigwedge^2V_-^\vee\right)\dashrightarrow \P\left(\bigwedge^2V_+^\vee\right)\simeq \P\left(\bigwedge^2V_-\right)$$
as the projectivization of the inversion of skew-symmetric tensors.

\begin{remark}\label{rem:spin}
As in the previous examples, in one particular case ($n=6$) we obtain a special Cremona transformation; in this case  $\psi$ is the quadro-quadric transformation whose exceptional locus is the Grassmannian $\DA_5(2)$, which is the Severi variety associated to the division algebra $\H$ of quaternions.   
\end{remark}

\subsection{Example: The $E_7$ case}\label{ssec:E7}

The last special Cremona transformation, associated to the algebra $\Oc$ of octonions, makes its appearance in the case of the $27$-dimensional variety $\DE_7(7)$, which is the unique rational homogeneous variety of exceptional type admitting an equalized $\C^*$-action with isolated extremal fixed points. First of all, a straightforward computation shows that the bandwidth of the action with respect to the ample generator of the Picard group is three. 
The inner fixed point components of the action are isomorphic to the variety of ($\DE_7$-isotropic) lines passing by the sink and the source, which is isomorphic to the Cartan variety $\DE_6(1)$, which is the Severi variety associated to the algebra $\Oc$ of octonions. Then $\DE_6(1)$ is the exceptional locus of the Cremona transformation $\psi:\P^{26}\dashrightarrow \P^{26}$. Since moreover one can show that $\psi$ is quadro-quadric (see \cite[Section 8]{WORS1}), we conclude that this is precisely the special Cremona transformation associated to that Severi variety.

%!TEX root = WORS5.tex

\section{Geometric realizations and rational homogeneous bundles}\label{sec:derived}

By restricting $\C^*$-actions on rational homogeneous varieties to invariant subvarieties we may construct more examples of $\C^*$-actions, that will be geometric realizations of other birational transformations. We will illustrate this procedure here by considering restrictions of $\C^*$-actions to some invariant rational homogeneous bundles over $\P^1$, showing in particular that the homogeneous cases of the list of special transformations of type $(2,1)$, classified by Fu and Hwang (cf. \cite{FuHw}), can be obtained in this way. Let us start by describing the varieties and $\C^*$-actions that we will use.

\begin{setup}\label{setup:derived}
We start with the adjoint group $G$ of a simple Lie algebra $\fg$ with Dynkin diagram $\cD$, a Cartan and a Borel subgroup $H\subset B \subset G$, and a co-character $\sigma_i:\Mo(H)\to \Z$, $i\in D$, inducing a short grading of $\fg$ (see Table \ref{tab:short}). We then have an equalized $\C^*$-action on every rational homogeneous variety of the form $\cD(I)$, that has isolated sink if $I=\{i\}$.  
We will denote by $P_i$ the parabolic subgroup containing $B$ such that $\cD(i)=G/P_i$. The transversal subgroup of the action (which is the semisimple part of $P_i$) will be denoted by $G^\perp\subset G$; its Dynkin diagram $\cD_i$ is the result of cancelling the node $i$ on $\cD$. As usual, we denote by $L_i$ the ample generator of $\Pic(\cD(i))$; the weights of the action with respect to $L_i$ at the sink and the source will be denoted by $0$, $\delta$, respectively. Given a $\C^*$-invariant irreducible curve $\ell\subset \cD(i)$, and a subset $\emptyset \neq J\subset D\setminus \{i\}$, the actions of $\C^*$ on $\cD(D)$ and $\cD(J\cup \{i\})$ restrict to the fiber products
$$
\ol{X}:=\cD(D)\times_{\cD(i)}\ell\subset\cD(D), \quad \ol{X}(J):=\cD(J\cup \{i\})\times_{\cD(i)}\ell\subset\cD(J\cup \{i\}),
$$
so that the natural maps
$$
\ol{X}\lra \ol{X}(J) \lra \ell
$$
are $\C^*$-equivariant. 
For simplicity, we will consider only the case in which $\ell$ is a line passing by the sink, joining $Y_-$ to a component of weight $1$. More concretely, we consider the fixed point $r_iP_i\in G/P_i$; since the images of $eB$ and $r_iB$ via the $i$-th $\P^1$-bundle structure of $G/B$  are the same (because $r_i\in BW(i)B$), it follows that they are contained in a curve in the class $\Gamma_i$, whose image into $\cD(i)$ will be our chosen curve $\ell$. The situation has been represented in Figure \ref{fig:derived} below.
\end{setup}

\begin{figure}[!h!]
\includegraphics[width=6cm]{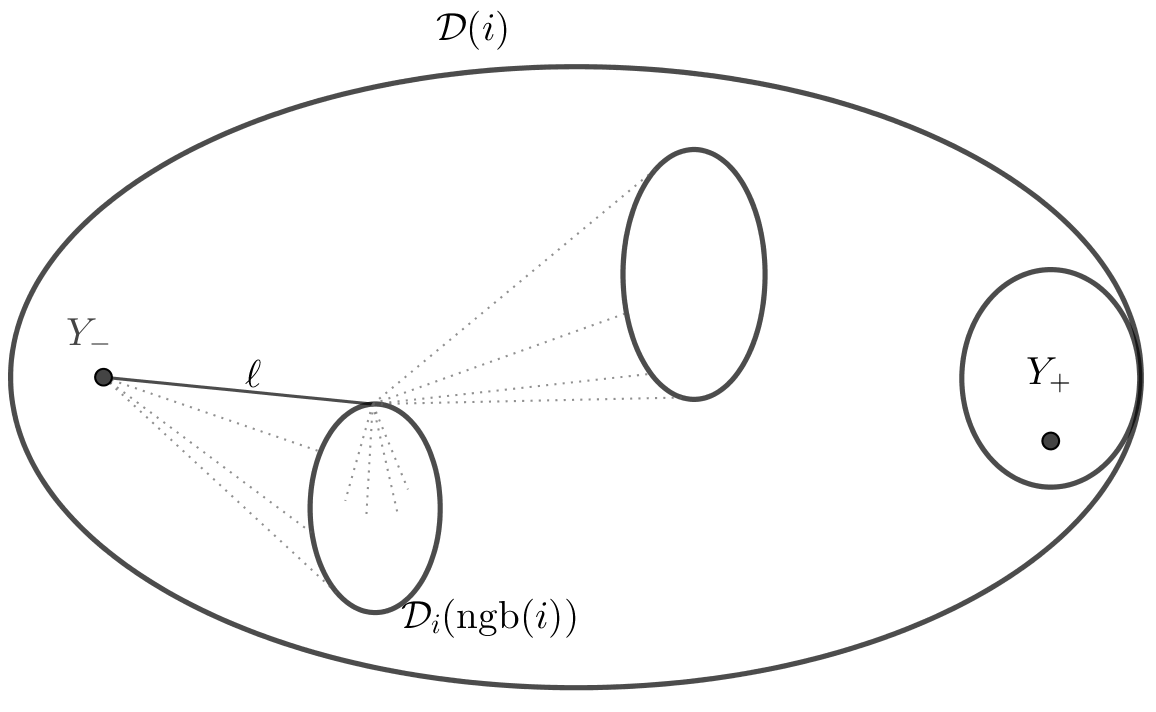}\qquad \includegraphics[width=5.5cm]{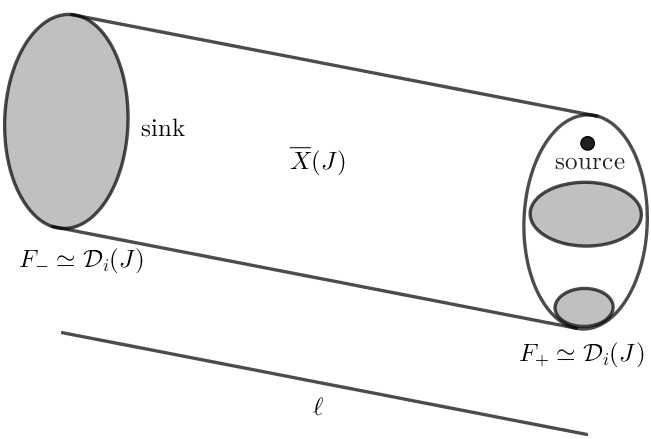}
\caption{A rational homogeneous variety with isolated sink, and the induced action on $\ol{X}(J)$.\label{fig:derived}}
\end{figure}

\begin{remark}\label{rem:invcurve}
The above construction of a $\C^*$-invariant smooth subvariety works in a much broader situation. Our particular choice of $\ell$ and $\ol{X}(J)$ is motivated by the fact that we want its induced birational transformation to go from a rational homogeneous space of the form $\cD_i(J)$ to a projective space. This is guaranteed as follows: first, as we will see below, assuming that the $\C^*$-invariant $\ell$ passes by $Y_-$ suffices to assure that the sink of the action on $\ol{X}(J)$ is a variety of the form $\cD_i(J)$, for every $J$. Second, for our choice of $\ell$ it will always be possible to choose $J$ so that $\ol{X}(J)$ has isolated source.  
\end{remark}

\subsection{Special birational transformations of type $(2,1)$}\label{ssec:FuHw}

In order to describe the birational transformations whose geometric realizations are the varieties $\ol{X}(J)$ described above, let us start by determining the fixed locus of the action of $\C^*$ on $\ol{X}$. Since the projection to $\ell$ is $\C^*$-equivariant, we have a surjective map from $\ol{X}\,\!^{\C^*}$ onto $\ell^{\,\C^*}$, which consists of two points: $y_-:=eP_i$ (so that $Y_-=\{y_-\}$ is the sink of the action in $\cD(i)$), and $y_+:=r_iP_i$ (of $L_i$-weight equal to $1$). Let us denote by $F_-,F_+\subset \ol{X}$ the corresponding fibers over $y_-,y_+$, which are isomorphic to $P_i/B\simeq G^\perp/B^\perp$. 

By Corollary \ref{cor:fixedareflags}, the fiber $F_-$ is a fixed point component of the action; on the other hand $F_+$ is $\C^*$-invariant, but the action of $\C^*$ on $F_+$ is not trivial:

\begin{lemma}\label{lem:actionF-}
In the situation of Setup \ref{setup:derived}, the induced action of $\C^*$ on $F_+$ is given by a co-character $\sigma_+$ inducing a short grading of the Lie algebra $\fg^\perp$. In particular, we may choose $J\subset D\setminus\{i\}$ so that the induced $\C^*$-action on $F^+$ has isolated source. 
With the standard numbering of the nodes in (each of the connected components of) the Dynkin diagram $\cD_i$ of $G^\perp$, the values of $\sigma_+\in\Mo(H^\perp)^\vee$ and $J$ have been described in Table \ref{tab:inducedF}.
\begin{table}[!h!!]\caption{Induced $\C^*$-action of the fiber $F_+$.
\label{tab:inducedF}}
\begin{center}
\begin{tabular}{|C|C|C|C|C|}
\hline
\cD&\cD(i) &\cD_i&\sigma_+&J\\\hline\hline
\DA_n&\DA_n(i) &\DA_{i-1}\sqcup \DA_{n-i} &\sigma_{i-1}+\sigma_{1}&\{1,n\}\\\hline
\DC_n&\DC_n(n) &\DA_{n-1}&\sigma_{n-1}&\{1\}\\\hline
\DB_n&\DB_n(1)&\DB_{n-1}&\sigma_{1}&\{2\}\\\hline
\DD_n&\DD_n(1)&\DD_{n-1}
&\sigma_{1}&\{2\}\\\hline
\DD_n&\DD_n(n)&\DA_{n-1}&\sigma_{n-2}&\{2\} \\\hline
\DE_6&\DE_6(1)&\DD_5&\sigma_5&\{2\}\\\hline
\DE_7&\DE_7(7)&\DE_6&\sigma_6&\{1\} \\\hline
\end{tabular}
\end{center}
\end{table}
\end{lemma}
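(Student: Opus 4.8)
The plan is to analyze the $\C^*$-action on the fiber $F_+ \simeq G^\perp/B^\perp$ directly through the root-theoretic data, exploiting that $F_+$ is the fiber over $y_+ = r_iP_i$ and that the $\C^*$-action is governed by the co-character $\sigma_i$. First I would identify the infinitesimal action on $F_+$: since $F_+$ is the fiber of $\ol X \to \ell$ over $y_+$, it is a translate by $r_i$ of the fiber $F_-$ over $y_-$, and the latter is the full flag manifold $G^\perp/B^\perp$ fixed pointwise by $\C^*$ (by Corollary \ref{cor:fixedareflags}, as $F_-$ lies in the sink). Conjugating by $r_i$ transports the trivial action on $F_-$ to a nontrivial one on $F_+$; concretely, the weight of $\C^*$ on $F_+$ at an $H$-fixed point $w B^\perp$ is obtained from Corollary \ref{cor:weights} applied to $r_i$-translated Weyl elements. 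The key computation is therefore to determine which co-character $\sigma_+$ of the Cartan $H^\perp \subset G^\perp$ this induced action corresponds to, by restricting $\sigma_i$ (pre-composed with conjugation by $r_i$) to the root system $\Phi^\perp$ of $\fg^\perp$.

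The central mechanism is the following: $\C^*$ acts on $F_+$ via the co-character $\sigma_i \circ \conj_{r_i}$ restricted to $H^\perp$, and I must show this restriction is (up to conjugacy in $G^\perp$) one of the short-grading co-characters $\sigma_+$ listed in Table \ref{tab:short} for $\fg^\perp$. The reflection $r_i$ sends $\alpha_i \mapsto -\alpha_i$ and permutes the remaining positive roots; its effect on a simple root $\alpha_j$ of $\fg^\perp$ (i.e.\ $j \in D \setminus \{i\}$) is $r_i(\alpha_j) = \alpha_j - \langle \alpha_j, \alpha_i^\vee\rangle \alpha_i$, so that $\sigma_i(r_i(\alpha_j)) = -\langle \alpha_j, \alpha_i^\vee\rangle$, which is nonzero precisely when $j \in \nbh i$ is adjacent to $i$ in the Dynkin diagram. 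This is exactly how the new grading $\sigma_+$ becomes concentrated on the node(s) of $\cD_i$ neighbouring the deleted node $i$, and a short Cartan-matrix computation for each case produces the values of $\sigma_+$ in the table. That $\sigma_+$ is indeed \emph{short} on $\fg^\perp$ follows because the total grading on $\fg$ induced by $\sigma_i$ is short, and the restriction to the subalgebra $\fg^\perp$ can only inherit gradings in $\{0,\pm 1\}$.

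Having pinned down $\sigma_+$, the choice of $J$ and the isolated-source claim follow from the earlier classification: by Proposition \ref{prop:isofixed} and Remark \ref{rem:revert} applied now to $G^\perp$ with co-character $\sigma_+$, the induced action on $\cD_i(J)$ has isolated source precisely when $J$ is the node of $\cD_i$ carrying $\sigma_+$ placed in the correct extremal position, and I would read off the admissible $J$ from those results case by case (for instance $\sigma_{n-1}$ on $\DA_{n-1}$ forces $J = \{1\}$ by the reversion rule for type $\DA$). I expect the main obstacle to be the bookkeeping in the exceptional and type-$\DA$ cases: in type $\DA_n$ the diagram $\cD_i$ disconnects into $\DA_{i-1}\sqcup \DA_{n-i}$, so $\sigma_+$ is a sum $\sigma_{i-1}+\sigma_1$ of co-characters on two factors and one must track the gradings on each component separately and verify $J=\{1,n\}$; in the $\DE_6$ and $\DE_7$ rows, confirming that $\sigma_+ = \sigma_5$ (resp.\ $\sigma_6$) is short on $\DD_5$ (resp.\ $\DE_6$) and yields isolated source requires consulting the explicit simple-root expansions in \cite[Planche I--IX]{Bourb}. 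These are finite, mechanical checks, but getting the node-numbering conventions of $\cD_i$ consistent with Bourbaki's is where the care lies.
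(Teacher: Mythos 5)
Your proposal follows essentially the same route as the paper's proof: identify the action on $F_+$ as the co-character $\sigma_i$ composed with the $r_i$-translation (the paper phrases this as $\sigma_+ = \sigma_i$ restricted to $r_i(\Mo(H^\perp))\hookrightarrow\Mo(H)$), compute $r_i(\alpha_j)$ to see that $\sigma_+$ takes the value $1$ exactly on the nodes adjacent to $i$, and then read off $J$ from the earlier classification of actions with isolated extremal fixed components. Your two small additions --- deducing shortness of the $\sigma_+$-grading from shortness of the ambient $\sigma_i$-grading, and using Proposition \ref{prop:isofixed} together with Remark \ref{rem:revert} in place of the paper's appeal to Proposition \ref{prop:equalisol} --- are sound and equivalent to what the paper does.
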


\begin{proof}
The fiber $F_+$ over $y_+=r_iP_i$ is a quotient of the subgroup $\conj_{r_i}(G^\perp)\subset G$ by the Borel subgroup $\conj_{r_i}(B^\perp)$. In order to study the action of $\C^*$ on $F_+$ we consider the Cartan subgroup $H^\perp:=G^\perp\cap H$, whose character lattice is $\Mo(H^\perp)=\ker(\sigma_i)\subset \Mo(H)$. Then the induced action of $\C^*$ on $F_+$ is given by the co-character $\sigma_+$ obtained via the composition:
$$
\Mo(\conj_{r_i}(H^\perp))=r_i
(\Mo(H^\perp))\hookrightarrow \Mo(H)\stackrel{\sigma_i}{\lra}\Z.
$$

The image into $\Mo(H)$ of the base of positive simple roots associated to the choice of the Borel subgroup $\conj_{r_i}(B^\perp)\subset \conj_{r_i}(G^\perp)$ (which is $\conj_{r_i}(\Delta\setminus\{\alpha_i)$) consists of all the roots of the form $r_i(\alpha_j)$, $j\neq i$. In all the cases described in Table \ref{tab:short} we have that 
$$r_i(\alpha_j)=\begin{cases}\alpha_j+\alpha_i&\text{if $j\in\nbh{i}$,}\\\alpha_j &\text{otherwise,}\end{cases}$$
for $j\neq i$, so that the co-character $\sigma_+$ sends every positive simple root $\alpha_j$ to $1$ if $j$ neighbors $i$, and to zero otherwise.  
Finally a case by case argument provides the list of co-characters in Table \ref{tab:inducedF}. The value of the set $J$ follows then by  Proposition \ref{prop:equalisol}.
\end{proof}

\begin{setup}\label{setup:derived2}
Let us set, throughout the rest of Section \ref{sec:derived},
$$
X:=\ol{X}(J),
$$
for each one of the choices of $\cD$, $i$ and $J$ as above. As we already noted, with this choice the sink and the source of the $\C^*$-action on $X$ % $\ol{X}(J)$ 
are, respectively:
\begin{itemize}
\item[($-$)] the rational homogeneous variety $\cD_i(J)$, and
\item[($+$)] an isolated point.
\end{itemize}
\end{setup}

By applying now Corollary \ref{cor:homocremona}, we immediately get:

\begin{corollary}\label{cor:actionF+}
In the setting of Setup \ref{setup:derived}, being $J$ as in Table \ref{tab:inducedF}, the induced $\C^*$-action on $X$  
defines a birational transformation:
$$
\xymatrix{\cD_i(J)\ar@{-->}[r]^(.40){\psi}&\P^{\dim(\cD_i(J))}}.
$$
\end{corollary}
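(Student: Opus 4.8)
The plan is to obtain $\psi$ directly from the general construction of \ref{sssec:induced} and then identify the two projectivized normal bundles appearing there with $\cD_i(J)$ and $\P^{\dim(\cD_i(J))}$. First I would check that the $\C^*$-action on $X=\ol{X}(J)$ is equalized: it is the restriction to the invariant smooth subvariety $X\subset\cD(J\cup\{i\})$ of the equalized action associated to $\sigma_i$, and for every fixed point component $Y$ the bundle $\cN_{Y|X}$ is a subbundle of $\cN_{Y|\cD(J\cup\{i\})}$, so its weights are again $\pm 1$. With equalization in hand, \ref{sssec:induced} yields a birational map
$$\psi:\P_{Y_-}(\cN_{Y_-|X}^\vee)\dashrightarrow\P_{Y_+}(\cN_{Y_+|X}^\vee),$$
whose $G^\perp$-equivariance is recorded by Corollary \ref{cor:homocremona}.

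Next I would invoke Setup \ref{setup:derived2}, which identifies the sink as $Y_-=\cD_i(J)$ and the source $Y_+$ as an isolated point. The crucial geometric input is that the projection $X\to\ell$ is a locally trivial $\cD_i(J)$-bundle over the curve $\ell\cong\P^1$, so that $\dim(X)=\dim(\cD_i(J))+1$ and the sink $Y_-$, being the fibre over $y_-$, is a divisor whose normal bundle is the (trivial, hence rank-one) pullback of $T_{\ell,y_-}$. Since $Y_-$ is the sink we have $\cN_{Y_-|X}=\cN^+(Y_-)$ with $\nu^+(Y_-)=1$, and therefore $\P_{Y_-}(\cN_{Y_-|X}^\vee)\cong Y_-=\cD_i(J)$. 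This is precisely the step that makes the domain of $\psi$ the Fano variety itself rather than a projective bundle over it.

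Finally, because $Y_+$ is a single point, $\cN_{Y_+|X}$ is just the tangent space to $X$ at the source, of dimension $\dim(X)=\dim(\cD_i(J))+1$, so $\P_{Y_+}(\cN_{Y_+|X}^\vee)\cong\P^{\dim(X)-1}=\P^{\dim(\cD_i(J))}$. Combining these two identifications gives the asserted $\psi:\cD_i(J)\dashrightarrow\P^{\dim(\cD_i(J))}$.

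I expect the main point to be the normal-bundle computation at the sink: one must see that choosing $\ell$ to be a single line through $y_-$ forces $\cN_{Y_-|X}$ to have rank one, so that the positive BB-cell of the sink is a line bundle and its projectivization collapses onto $\cD_i(J)$. The complementary fact that $Y_+$ is isolated --- which is what guarantees the target is a genuine projective space rather than a projectivized bundle --- rests on Lemma \ref{lem:actionF-} together with the choice of $J$ from Table \ref{tab:inducedF}, and I would quote it rather than reprove it.
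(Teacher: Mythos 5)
Your proof is correct and follows essentially the same route as the paper, which obtains the corollary by combining Setup \ref{setup:derived2} (the sink is the divisor $\cD_i(J)$, the source an isolated point) with the general construction of \ref{sssec:induced}, quoted through Corollary \ref{cor:homocremona}. Your additional checks --- that the restricted action is still equalized, and that the projectivized normal bundles of sink and source collapse to $\cD_i(J)$ and $\P^{\dim(\cD_i(J))}$ respectively --- simply make explicit what the paper leaves implicit.
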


Let us describe some properties of the $\C^*$-action  on $X$,  
and of its associated  birational transformation. We start by listing in Table \ref{tab:inducedF2} below the fixed point components of the action, that can be obtained out of Lemma \ref{lem:actionF-} and Table \ref{tab:equalisol}. In the table we use the symbol $\star$ to denote the set consisting of an isolated closed point. 
We will consider the weights of the action with respect to a linearization on the very ample line bundle 
\begin{equation}\label{eq:minemb}
L:=L_i+\sum_{j\in J}L_j,
\end{equation} 
and denote by $Y_i$ the union of the fixed components of weight $i$. As usual, by choosing conveniently the linearization of the action to $L$, 
we may assume that $Y_0=F_-$, and then a case by case inspection shows then that the source of the action, which is a point, is attained at weight $3$. 

\begin{table}[!h!!]\caption{Fixed point components of the $\C^*$-action on $X=\ol{X}(J)$.
\label{tab:inducedF2}}
\begin{center}
\begin{tabular}{|c|c|c|c|c|c|}
\hline
$\cD$&$\cD(i)$&$Y_0=\cD_i(J)$&$Y_1$&$Y_2$&$Y_3$\\\hline\hline
\multirow{3}{*}{$\DA_n$}&\multirow{3}{*}{$\DA_n(i)$}&$\DA_{i-1}(1)$&$\DA_{i-2}(1)$&$\DA_{i-2}(1)$&\multirow{3}{*}{$\star$}\\
&&$\times$&$\times$&$\sqcup$&\\
&&$\DA_{n-i}(n-i)$&$\DA_{n-i-1}(n-i-1)$&$\DA_{n-i-1}(n-i-1)$&\\\hline
$\DC_n$&$\DC_n(n)$&$\DA_{n-1}(1)$&$\DA_{n-2}(1)$&$\emptyset$&$\star$\\\hline
$\DB_n$&$\DB_n(1)$&$\DB_{n-1}(1)$&$\star$&$\DB_{n-2}(1)$&$\star$\\\hline
$\DD_n$&$\DD_n(1)$&$\DD_{n-1}(1)$&$\star$&$\DD_{n-2}(1)$&$\star$\\\hline
$\DD_n $&$\DD_n(n)$&$\DA_{n-1}(2)$&$\DA_{n-3}(2)$&$\DA_{1}(1)\times \DA_{n-3}(1)$&$\star$\\\hline
$\DE_6$&$\DE_6(1)$&$\DD_5(4)$&$\DA_4(1)$&$\DA_4(3)$&$\star$\\\hline
$\DE_7$&$\DE_7(7)$&$\DE_6(1)$&$\DD_5(1)$&$\DD_5(5)$&$\star$\\\hline
\end{tabular}
\end{center}
\end{table}

One may easily compute the ranks of the positive and negative parts of the normal bundles of the inner fixed point components of the action, which allow to compute the exceptional locus of $\psi$ and its inverse. As usual, let us denote by $X^\flat\to X$ the blowup of $X$ along its source $Y_3$, with exceptional divisor $\P^{\dim(\cD_i(J))}$; note that the sink $Y_0=F_-$ of $X$ is already a divisor.

\begin{lemma}\label{lem:FHexc}
With the above notation, 
$\nu^+(Y_1)=1$, and $\nu^-(Y_2)=1$.  
In particular, $\Exc(\psi)=\ol{X^+(Y_1)}\cap Y_0\simeq Y_1$ and $\Exc(\psi^{-1})=\ol{{X^\flat}^-(Y_2)}\cap \P^{\dim(\cD_i(J))}\simeq Y_2$.
\end{lemma}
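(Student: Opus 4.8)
The plan is to exploit the fibration structure $X=\ol X(J)\to\ell\cong\P^1$ from Setup \ref{setup:derived}, whose fibre is $\cD_i(J)$. Since $\ell$ has exactly two $\C^*$-fixed points $y_-,y_+$, every fixed point component of $X$ lies in one of the two fibres over them; the fibre $F_-=Y_0$ over the sink is pointwise fixed, while on $F_+\cong\cD_i(J)$ the induced action is the short grading $\sigma_+$ of $\fg^\perp$ described in Lemma \ref{lem:actionF-}. Thus the remaining components $Y_1,Y_2,Y_3$ are precisely the fixed components of $\sigma_+$ on $F_+$, and by construction (Table \ref{tab:inducedF2}) $Y_1$ is the $\sigma_+$-sink, $Y_3$ the $\sigma_+$-source (an isolated point), and $Y_2$ the unique inner component, sitting at the middle critical value of the bandwidth-two action $\sigma_+$.

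First I would compute the two ranks. For $Y_k\subset F_+$ the normal bundle sequence of $F_+\subset X$, restricted to $Y_k$, splits $\C^*$-equivariantly at the fixed locus, giving a decomposition of $\C^*$-modules
\[
\cN_{Y_k|X}=\cN_{Y_k|F_+}\oplus\big(T_{\ell,y_+}\otimes\cO_{Y_k}\big).
\]
As $y_+$ is the source of the action on $\ell$, the line $T_{\ell,y_+}$ carries $\C^*$-weight $+1$, and since the action is equalized this is the only weight occurring there. Consequently $\nu^+(Y_k)=\nu^+_{F_+}(Y_k)+1$ and $\nu^-(Y_k)=\nu^-_{F_+}(Y_k)$, where $\nu^\pm_{F_+}$ denote the ranks for the $\sigma_+$-action on $F_+$. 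Because $Y_1$ is the $\sigma_+$-sink we have $\nu^+_{F_+}(Y_1)=0$ by \eqref{eq:nus}, whence $\nu^+(Y_1)=1$. For $Y_2$ one reads off from Table \ref{tab:equalisol} (applied to $\sigma_+$ on each irreducible factor of $F_+$) that the inner component at critical value one has exactly one negative normal direction inside $F_+$, i.e. $\nu^-_{F_+}(Y_2)=1$; hence $\nu^-(Y_2)=1$. Dually $\nu^+_{F_+}(Y_2)\ge1$, so $\nu^+(Y_2)\ge2$.

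With the two ranks in hand I would deduce the exceptional loci. Blowing up the isolated source $Y_3$ produces, as in \ref{sssec:blowup}, an equalized B-type action on $X^\flat\to X$ with source $\P^{\dim(\cD_i(J))}$ and sink still $Y_0$, inducing the same $\psi$, with the values of $\nu^\pm$ at $Y_1,Y_2$ unchanged. Applying Lemma \ref{lem:defincodim1} to $X^\flat$, the only inner component with $\nu^->1$ is $Y_1$ (since $\nu^-(Y_2)=1$), so $\Exc(\psi)\subseteq Z_-(Y_1)=\ol{X^+(Y_1)}\cap Y_0$, the last equality holding because no fixed component lies between $Y_1$ and the sink. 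Symmetrically, reversing the action, the only inner component with $\nu^+>1$ is $Y_2$ (since $\nu^+(Y_1)=1$), giving $\Exc(\psi^{-1})\subseteq Z_+(Y_2)=\ol{{X^\flat}^-(Y_2)}\cap\P^{\dim(\cD_i(J))}$. The reverse inclusions follow from the path argument of Lemma \ref{lem:defincodim1}: a point of $\ol{X^+(Y_1)}\cap Y_0$ lies on a broken orbit reaching $Y_1$, where the upward continuation is not unique precisely because $\nu^-(Y_1)>1$, so $\psi$ cannot extend there; the same reasoning with $\nu^+(Y_2)>1$ treats $\psi^{-1}$. Finally, since $\nu^+(Y_1)=1$ the cell $X^+(Y_1)\cong\cN^+(Y_1)$ is a line bundle over $Y_1$, and the $G^\perp$-equivariant map sending a point of $Y_1$ to the sink of its unique downward orbit identifies $\ol{X^+(Y_1)}\cap Y_0$ with $Y_1$; likewise $\nu^-(Y_2)=1$ yields $\ol{{X^\flat}^-(Y_2)}\cap\P^{\dim(\cD_i(J))}\cong Y_2$.

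The step I expect to be most delicate is the uniform verification that $\nu^-_{F_+}(Y_2)=1$ in every case of Table \ref{tab:inducedF2}: conceptually this is just the statement that the middle component of a bandwidth-two equalized action has a single downward orbit to its sink, but $Y_2$ is in several cases reducible (for instance the disjoint union in type $\DA_n$) or of spinor/Grassmann type, so the cleanest route is a case-by-case reading of Table \ref{tab:equalisol} for the action $\sigma_+$ on each irreducible factor of $F_+$. Establishing the equalities of the exceptional loci, rather than the mere inclusions produced by Lemma \ref{lem:defincodim1}, also relies on the path-independence argument of that lemma, which I would invoke verbatim.
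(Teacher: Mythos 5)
Your reduction along the fibration is sound: every fixed component other than $Y_0$ lies in $F_+$, the fibre direction $\cN_{F_+|X}$ carries weight $+1$ at the source $y_+$ of $\ell$, so $\nu^+(Y_k)=\nu^+_{F_+}(Y_k)+1$ and $\nu^-(Y_k)=\nu^-_{F_+}(Y_k)$; this gives $\nu^+(Y_1)=1$, which is the same fact the paper extracts by noting that $\ol{X^-(Y_1)}=F_+$ is a divisor and invoking (\ref{eq:nus}). The genuine gap is in the step you yourself flag as delicate: $\nu^-_{F_+}(Y_2)=1$ cannot be ``read off from Table \ref{tab:equalisol}'', because that table records only the fixed components and their weights, not the ranks $\nu^\pm$ of the positive/negative parts of their normal bundles. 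Dimension counting closes the gap only when $Y_2$ has codimension two in $F_+$ (types $\DB_n$, $\DD_n(1)$), and a factorwise application of (\ref{eq:nus}) handles the product case $\DA_n$ (where the relevant factor-sink is a hyperplane); but in the cases $\DD_n(n)$, $\DE_6(1)$, $\DE_7(7)$ the component $Y_2$ has codimension bigger than two in the irreducible variety $F_+$ (e.g.\ $Y_2=\DA_1(1)\times\DA_{n-3}(1)$ has codimension $n-2$ in $F_+=\DA_{n-1}(2)$), so the splitting of $\cN_{Y_2|F_+}$ must actually be computed. Worse, your conceptual gloss is reversed and false in general: what is needed is a single \emph{upward} orbit from $Y_2$ to the source, not a single downward orbit to the sink; writing $\C^n=A\oplus B$ for the weight decomposition in the case $F_+=\DA_{n-1}(2)$, a point $L_A\oplus L_B\in Y_2$ has down-directions $\Hom(L_B,A/L_A)$ of rank $n-3$ and only one up-direction $\Hom(L_A,B/L_B)$. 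The paper's proof of $\nu^-(Y_2)=1$ is a uniform argument that your proposal misses, and it uses precisely that $Y_3$ is an isolated point: every orbit with sink in $Y_2$ has source at $Y_3$; by equalization and AM vs FM (\ref{sssec:AMvsFM}) its closure has $L$-degree $\mu_L(Y_3)-\mu_L(Y_2)=1$, hence is a line in the very ample embedding given by $L$, and a positive-dimensional family of lines through a fixed point of $Y_2$ and the point $Y_3$ is impossible.

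A second weakness is the reverse inclusion. ``The upward continuation is not unique because $\nu^-(Y_1)>1$, so $\psi$ cannot extend there'' is not a proof: non-uniqueness of invariant chains through a point does not by itself preclude the limit $\lim_{s\to 0}\psi(\gamma(s))$ from being path-independent. The paper argues differently: $\psi^{-1}$ contracts the divisor $\ol{{X^\flat}^-(Y_1)}\cap\P^{\dim(\cD_i(J))}$ (the trace of the strict transform of $F_+$ on the exceptional projective space) onto the set $\ol{X^+(Y_1)}\cap Y_0\simeq Y_1$, and this forces that set to lie in $\Exc(\psi)$; the analogous contraction statement handles $\Exc(\psi^{-1})$. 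Note finally that your unproved assertion $\nu^-(Y_1)>1$ fails exactly when $\cD=\DC_n$, where $\nu^-(Y_1)=\codim_X(Y_1)-1=1$: there $Y_2=\emptyset$ and both $\psi$ and $\psi^{-1}$ are isomorphisms, a case the paper's proof singles out explicitly, so any argument for the equalities in the Lemma must make this case distinction rather than treat all rows of Table \ref{tab:inducedF2} uniformly.
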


\begin{proof}
First of all, we note that $\ol{X^-(Y_1)}=F_+$ is a divisor in $X$, hence, by Formula (\ref{eq:nus}), $\nu^+(Y_1)=1$. On the other hand, the fact that the source of $X$ is a point, easily implies 
that $\nu^-(Y_2)=1$ (in the cases in which $Y_2\neq\emptyset$). In fact, if this were not the case, we would have a positive dimensional family of curves joining a given point of $Y_2$ and the point $Y_3$. This contradicts the fact that, by \ref{sssec:AMvsFM}, these curves are lines in the embedding given by the very ample line bundle $L$.
By Lemma \ref{lem:defincodim1}, this implies that 
$$\Exc(\psi)\subseteq\ol{{X^\flat}^+(Y_1)}\cap Y_0\simeq Y_1,\quad \Exc(\psi^{-1})\subseteq\ol{{X^\flat}^-(Y_2)}\cap \P^{\dim(\cD_i(J))}\simeq Y_2.$$
Note that, since $Y_-\subset X$ is a divisor, ${X^\flat}^+(Y_1)\simeq X^+(Y_1)$,

Note also that in the case in which $\cD=\DC_n$ this already says that $\psi^{-1}$ is a morphism. On the other hand the variety $F_+$ is a projective space together with the equalized action having as fixed components a hyperplane $Y_1$ and a point $Y_3$; in particular we see that also $\nu^-(Y_1)=1$ and so also $\psi$ is an isomorphism. 

In the rest of the cases $\psi^{-1}$ contracts the divisor $\ol{X^{\flat-}(Y_1)}\cap \P^{\dim(\cD_i(J))}$ to $\ol{X^-(Y_1)}\cap F_-\subset F_-$, so that this set, which is isomorphic to $Y_1$, is necessarily contained in $\Exc(\psi)$. A similar arguments determines $\Exc(\psi^{-1})$.
\end{proof}

\begin{remark}
As we have just seen, in the case of $\DC_n(n)$, $\psi^{-1}$ and $\psi$ are isomorphisms; in the rest of the cases we obtain examples of non-regular birational transformations. The cases in which $\Pic(Y_0)=1$ are precisely the homogeneous examples in the list of special birational transformations of type $(2,1)$ classified by Fu and Hwang, \cite{FuHw}. In a nutshell, the maps $\psi$ are  birational linear projections of the minimal embeddings of the rational homogeneous spaces $\cD_i(J)$ listed above. The exceptional loci of $\psi$ and its inverse can be computed by using Lemma \ref{lem:FHexc} in each case (see Table \ref{tab:FuHw} below).

\begin{table}[!h!!]\caption{(Homogeneous) Special birational transformations of type $(2,1)$.
\label{tab:FuHw}}
\begin{center}
\begin{tabular}{|c|c|c|c|}
\hline
\multicolumn{2}{|c|}{$\hspace{0.43cm}\cD_i(J)\hspace{0.1cm}\stackrel{\psi}{\dashrightarrow}\hspace{0.15cm}\P^{\dim(\cD_i(J))}$}&\multirow{2}{*}{$\Exc(\psi)$}&\multirow{2}{*}{$\Exc(\psi^{-1})$}\\\cline{1-2}
$\cD_i(J)$&$\dim(\cD_i(J))$&&\\\hline\hline
$\DA_{i-1}(1)$&$\multirow{3}{*}{$n-1$}$&$\DA_{i-2}(1)$&$\DA_{i-2}(1)$\\
$\times$&&$\times$&$\sqcup$\\
$\DA_{n-i}(n-i)$&&$\DA_{n-i-1}(n-i-1)$&$\DA_{n-i-1}(n-i-1)$\\\hline
$\DB_{n-1}(1)$&$2n-3$&$\star$&$\DB_{n-2}(1)$\\\hline
$\DD_{n-1}(1)$&$2n-4$&$\star$&$\DD_{n-2}(1)$\\\hline
$\DA_{n-1}(2)$&$2(n-2)$&$\DA_{n-3}(2)$&$\DA_{1}(1)\times \DA_{n-3}(1)$\\\hline
$\DD_5(4)$&$10$&$\DA_4(1)$&$\DA_4(3)$\\\hline
$\DE_6(1)$&$16$&$\DD_5(1)$&$\DD_5(5)$\\\hline
\end{tabular}
\end{center}
\end{table}

\end{remark}

\subsection{Linear sections}\label{ssec:linsec}

There are still two examples in the list of \cite{FuHw} that do not appear in Table \ref{tab:FuHw}. We will show now how to obtain some geometric realizations of them. These realizations will be $\C^*$-invariant subvarieties of the varieties $X=\ol{X}(J)$ described above. 

We start by considering the projection $X\to\ell$, and noting that this is a $\cD_i(J)$-bundle, which, by Grothendieck's theorem, is determined by a cocycle $\theta\in \HH^1(\ell,G^\perp)$, 
which is completely determined by a co-character $\sigma':\Mo(H^\perp)\to \Z$. One may then easily check (see \cite[Example~2.3]{MOS6}) that this cocycle is equal to the co-character $\sigma_+$ defining the restriction of the $\C^*$-action on $\ol{X}(J)$ to the fiber $F_+$ (see Lemma \ref{lem:actionF-}). The cocycle $\theta$ defines then a principal $\C^*$-bundle $\cE\to \ell$, so that $X$ may be written as the variety $\cE\times^{\C^*}\cD_i(J)$, which is defined as the quotient of $\cE\times^{\C^*}\cD_i(J)$ by the equivalence relation:
$$
(e,x)\sim (et,tx)\mbox{, for every }t\in\C^*.
$$ 

At this point, for any $B\subset\cD_i(J)$ smooth $\C^*$-invariant subvariety, we may define a subvariety
$$
\ol{B}:=\cE\times^{\C^*}Y\subset \cE\times^{\C^*}\cD_i(J)=X,
$$
which is a $B$-bundle over $\P^1$, and inherits the action of $\C^*$. The fixed point locus of the $\C^*$-action on $\ol{B}$ will be $X^{\C^*}\cap \ol{B}$. In particular, if $B\subset \cD_i(J)$ contains the source of the $\C^*$-action defined by $\sigma^+$, $\ol{B}$ will then be a geometric realization of a birational map 
$$
B\dashrightarrow \P^{\dim(B)}.
$$
Furthermore, in order to avoid this map to be an isomorphism, we will require $B$ to meet both $Y_1,Y_2\subset X^{\C^*}$. 

The two non-homogeneous examples in the list of \cite[Theorem 1.1]{FuHw} can be achieved by considering complete intersections defined by general elements in the linear system $\HH^0(\cD_i(J),L)_1$. By definition they contain $Y_1$ and $Y_3$, and are smooth at its intersection with $Y_2$ by Bertini's theorem. Furthermore, such a section is smooth at $Y_3$, since $\P(T_{\cD_i(J),Y_3}^\vee)$ can be naturally identified with $\langle Y_2\rangle$, which $\langle B \rangle$ meets in dimension $\dim(B)-1$ , by definition. 

Consequentely, since $B$ supports a $\C^*$-action, in order to guarantee its smoothness it is enough to prove that it is smooth at $Y_1$. For this purpose we use \cite[Proposition 1.7.5]{BS} together with Proposition \ref{prop:smooth} and Remark \ref{rem:smooth}, to claim that a general $B$ of this kind is smooth if  
$\dim (Y_1)<\dim(\cD_i(J))/2$, or $\dim(Y_1)=\dim(\cD_i(J))/2$ and the top Chern class of $\cN_{Y_1|\cD_i(J)}\otimes L$ is zero. 
The following table contains the description of the normal bundles of $Y_1$ in $\cD_i(J)$ in each case:
\begin{table}[!h!!]\caption{The normal bundle $\cN:=\cN_{Y_1|\cD_i(J)}$.
\label{tab:FuHw2}}
\begin{tabular}{|C||C|C|C|C|C|}
\hline
\cD(i)&\DA_n(i)&\DC_n(n)&\DD_n(n)&\DE_6(1)&\DE_7(7)\\\hline
\cD_i(J)&\DA_{i-1}(1)\times \DA_{n-i}(n-i)&\DA_{n-1}(1)&\DA_{n-1}(2)&\DD_5(4)&\DE_6(1)\\\hline
Y_1&\DA_{i-2}(1)\times \DA_{n-i-1}(n-i-1)&\DA_{n-2}(1)&\DA_{n-3}(2)&\DA_4(1)&\DD_5(1)\\\hline
\cN&\cO(1,0)\oplus\cO(0,1)&\cO(1)&\cQ^{\oplus 2}&\bigwedge^2T(-2)&\cS\\\hline
\end{tabular}
\end{table}

This shows that this construction works, and provides examples different from the ones we have already described  in the following two cases, which are the ones included in \cite[Theorem 1.1]{FuHw}:
\begin{itemize}
\item[(i)] $\cD_i(J)=\DA_4(2)$, and $B\subset \cD_i(J)$ is a general linear complete intersection of codimension at most two.
\item[(ii)] $\cD_i(J)=\DD_5(4)$, and $B\subset \cD_i(J)$ is a general linear complete intersection of codimension at most three. 
\end{itemize}

\begin{remark}\label{rem:tango}
The existence of a general linear complete intersection of codimension at most three of (ii) is guaranteed by the existence of an exact sequence of vector bundles in $\P^4$:
$$
\shse{\cO_{\P^4}}{\bigwedge^2\Omega_{\P^4}(3)}{\cT}
$$
where $\cT$ denotes the so-called Tango bundle in $\P^4$ (cf. \cite{JSW}).
\end{remark}

\bibliographystyle{plain}
\bibliography{bibliomin}
\end{document}